\newtheorem{theorem}{Theorem}
\newtheorem{lemma}[theorem]{Lemma}
\newtheorem{corollary}[theorem]{Corollary}
\newtheorem{definition}[theorem]{Definition}
\newtheorem{remark}[theorem]{Remark}
\DeclareMathOperator{\Var}{\mathop{Var}}
\title{Random Matrix Products: Universality and Least Singular Values}
\author{Phil Kopel}
\address{Department of Mathematics, University of Colorado at Boulder, Boulder CO 80309}
\email{philip.kopel@colorado.edu}
\author{Sean O'Rourke}
\address{Department of Mathematics, University of Colorado at Boulder, Boulder CO 80309}
\email{sean.d.orourke@colorado.edu}
\author{Van Vu}
\address{Department of Mathematics, Yale University, New Haven CT 06520
}
\email{van.vu@yale.edu}
\thanks{S. O'Rourke has been supported by NSF grant ECCS-1610003.  V. Vu is supported
	by NSF grant DMS-1307797 and AFORS grant FA9550-12-1-0083}
\date{}
\begin{document}
	\maketitle
	\begin{abstract}		
		We establish, under a moment matching hypothesis, the local universality of the correlation functions associated with products of
		$M$  independent iid random matrices, as $M$ is fixed, and  the sizes of the matrices tend to infinity.
		This generalizes an earlier result of Tao and the third author for the case $M=1$.

		We also prove Gaussian limits for the centered linear spectral statistics of products of $M$ independent iid random matrices. This is done in two steps. First, we establish the result for 
		product random matrices with Gaussian entries, and  then extend to the general case of non-Gaussian entries  by  another moment matching argument.  Prior to our result,  Gaussian limits were known only for the case $M=1$.   In a similar fashion, we establish Gaussian limits for the centered linear spectral statistics of products of independent truncated random unitary matrices. In both cases, we are able to obtain explicit expressions for the limiting variances. 
		
		The main difficulty in our study is that the entries of the product matrix are no longer independent. 
		Our key technical lemma  is a lower bound on the least singular value of the translated linearization matrix associated with the product of $M$ normalized independent random matrices with independent and identically distributed subgaussian entries. This lemma is of independent interest. 
	\end{abstract}
		
		\section{Introduction and Statement of Results}
		
	Random matrices with independent entries  have been among the most central objects of study in random matrix theory since the pioneering work of Ginibre in 1965 \cite{Ginibre}, and many important advances and applications have been established since that time (for a partial overview see \cite{BD}, and the excellent collection of references contained therein). 
		
		\begin{definition} \label{def:iidmatrix}
			An \emph{iid random matrix (with subgaussian decay)} is an $n \times n$ random matrix $M_n=(\xi_{i,j})_{1\leq i,j\leq n}$ where each entry $\xi_{i,j}$ is given by an independent copy of a subgaussian random variable $\xi$ with mean zero and unit variance, and whose real and imaginary parts are independent. 
		\end{definition}
		The distribution of a single entry of an iid random matrix is commonly called the \textit{atom distribution}.
		The requirement that atom distributions be \textit{subgaussian} means that there exists some constants $C,v>0$ (independent of $n$) such that for each $t>0$:
		\begin{align}
		\mathbf{P}\left(|\xi_{1,1}|>t \right)\leq Ce^{-vt^2}.
		\end{align}
		If each entry $\xi_{i,j}$ is given by a complex standard Gaussian random variable, the iid matrix is said to belong to the complex Ginibre ensemble, or more briefly GinUE. If the entries are real Gaussians, then the matrix is said to belong to the real Ginibre ensemble, or GinOE. The ensemble of matrices with independent centered Bernoulli entries provides another natural example of an iid random matrix.
		
		One of the most salient and important features of iid matrices is that they are non-Hermitian, and the methods used to study them are therefore frequently different from those employed in Hermitian random matrix theory. Notice that unlike in the Hermitian case we no longer have strictly real eigenvalues, which introduces a number of difficulties (for instance, limiting the usefulness of polynomials in studying general functions of the eigenvalues, which in turn limits the usefulness of combinatorial methods based on moment methods \cite{BS} \cite{BD} \cite{PK}). On the other hand, the lack of symmetry is occasionally useful and the mutual independence of all rows (or all columns) can simplify matters considerably in certain situations \cite{V}.

		In this paper, we study the product of $M$ independent iid random matrices. 
		The spectrum of such a product  has been an object of keen interest in random matrix theory, and many important advances have been made in recent years.  We refer the reader to \cite{AKR, AGT, AGT2, AB, AB2, AI, BC, BJW, Freal, GNT, GT, Ipsen, IK, Nemish, Nemish2, OS, OSVR} and references therein; however, since the subject has been so intensively studied, the collection of references above as well as our discussion below if far from complete.  This subfield of random matrix theory is not only motivated by its own intrinsic interest, as  the study of products of matrices is a very natural generalization of the study of a single random matrix, but is also motivated by connections to several of the applied sciences (the interested reader is directed to the impressive survey of applications assembled in \cite{Ipsen}). 
		
		The first important  result concerning  random matrix products is the $M$-fold circular law, which occupies the same position as either the semicircular law for Wigner matrices or circular law for independent entry matrices, and governs the limiting distribution of eigenvalues. This remarkable result says that as $n\to\infty$ the limiting distribution of the spectrum of the product of $M$ normalized $n$ by $n$ independent iid matrices (obeying certain moment assumptions), $X=n^{-M/2}X^{(1)}\dotsm X^{(M)}$, is supported on the unit disc $|z|<1$, and has the following density \cite{GT} \cite{OS}:
		\begin{align}
		\frac{1}{M\pi}|z|^{2/M-2}.
		\end{align}
		Notice that, although for even somewhat sizable $M$ the distribution of a single entry of the product matrix $X$ may be quite complicated, the limiting distribution is miraculously simple.
		
		Once the global distribution has been established, the next natural questions are the limiting laws of 
		linear statistics and universality of local statistics. In this paper, we  attack both problems and solve them under 
		certain moment conditions.  Throughout the paper, we always assume all iid random matrices have atom distribution with subgaussian decay as  described in Definition \ref{def:iidmatrix}.

		Our first result concerns the \textit{linear statistics} of random product matrices, which describe \textit{fluctuations} of the spectrum of a random matrix around its limiting distribution -- in this case, the $M$-fold circular law. The centered linear statistic associated with the random matrix $X$ (with eigenvalues denoted by $\lambda_1,\ldots,\lambda_n$) and some test function $f$ (whose regularity depends on the ensemble under consideration) is defined by the following formula:
		\begin{align}
			N_n[f]=\sum_{j=1}^n f(\lambda_j) -\mathbf{E}\left[\sum_{j=1}^n f(\lambda_j) \right].
		\end{align}
		The behavior of linear statistics is well understood for Wigner matrices (the unnormalized statistic converges to a normal distribution, which implies fluctuations comparable to a constant, in striking opposition to the fluctuations seen in the case of the sum of iid random variables, which are comparable to $\sqrt{n}$) up to some delicate questions about the optimal regularity constraints on the test function $f$ \cite{Sch} \cite{SW} \cite{LP} \cite{SSosh}. Much progress has been made in recent years on the linear statistics of independent entry ensembles \cite{OR} \cite{PK} \cite{RS}, even though due to difficulties associated with the failure of the Hermitian condition there is still much work to be done in that area (the failure of Hermiticity and the complex eigenvalues it implies means that analytic test functions are no longer dense in the space of smooth test functions, for instance, which limits the efficacy of trace methods), and substantial new ideas are likely to be required. To the best of the authors' knowledge, the linear statistics of products of iid matrices have not yet been investigated.
		
		Before we can state our result, we will need to introduce the concept of \textit{moment matching}. Two iid random matrices $X$ and $X^\prime$ with entries $\xi_{i,j}$ and $\xi_{i,j}^\prime$ respectively are said to match moments to order $k$ if, for all $1\leq i,j\leq n$, $a,b\geq 0$, $a+b\leq k$:
		\begin{align*}
			\mathbf{E} \left[\mbox{Re}(\xi_{i,j})^a\mbox{Im}(\xi_{i,j})^b\right]
			=
			\mathbf{E}\left[ \mbox{Re}(\xi^\prime_{i,j})^a\mbox{Im}(\xi^\prime_{i,j})^b
			\right].
		\end{align*}
		We will say that two product matrices, $X^{(1,1)}\dotsm X^{(1,M)}$ and $X^{(2,1)}\dotsm X^{(2,M)}$, match moments to order $k$ if each individual pair of factor matrices (meaning $X^{(1,j)}$ and $X^{(2,j)}$ for $1\leq j\leq M$) do. We will also need to define a relevant Sobolev norm.
		
		\begin{definition}
			Let $f$ be a real valued function defined on the complex plane, and let $\hat{f}(k)$ denote the $k$-th Fourier coefficient of the restriction of a function $f$ to the circle $|z|=R$:\footnote{Here, we use $\sqrt{-1}$ to denote the imaginary unit and reserve $i$ as an index.}
			\begin{align}
				\hat{f}(k)=\frac{1}{2\pi}\int_0^{2\pi} f(Re^{\sqrt{-1}\theta})
				e^{-\sqrt{-1}k\theta}d\theta.
			\end{align}
			The inner product $\left<f,g\right>_{H^{1/2}(|z|=R)}$ is then defined to be:
			\begin{align}
				\left<f,g\right>_{H^{1/2}(|z|=R)}=\sum_{k\in\mathbb{Z}} |k| \hat{f}(k) \overline{\hat{g}(k)}.
			\end{align}
			Finally, we set the Sobolev norm $\|\cdot\|_{H^{1/2}(|z|=R)}$ to be the norm induced by this inner product.  
		\end{definition}
		
		This Sobolev norm makes an appearance in the expression of the limiting variance of linear statistics of products of matrices matching the GinUE ensemble to four moments:
		
		\begin{theorem}
			\label{ginibre4mom}
			Let $f:\mathbb{C}\to\mathbb{R}$ be a test function with at least two continuous derivatives, supported in the spectral bulk $\left\{z\in\mathbb{C}:\tau_0<|z|<1-\tau_0\right\}$ for some fixed $\tau_0>0$. Fix an integer $M \geq 1$, and let $n^{-M/2}X^{(1)}\dotsm X^{(M)}$ be a matrix product such that each factor $X^{(i)}$ is an $n$ by $n$ iid random matrix (which are all jointly independent) with an  atom distribution matching the standard complex Gaussian distribution to four moments.
			Then the centered linear statistic associated with the test function $f$ and the product matrix $n^{-M/2}X^{(1)}\dotsm X^{(M)}$, denoted $N_n[f]$, converges in distribution as $n \to \infty$ to the mean zero normal distribution with the following limiting variance:
			\begin{align} \label{eq:variancels}
				\frac{1}{4\pi}\int_{|z|<1}\left|\nabla f(z)\right|^2 d^2z
				+\frac{1}{2}\|f\|^2_{H^{1/2}(|z|=1)}.
			\end{align}
			Notice that the limiting variance does not depend on $M$.
		\end{theorem}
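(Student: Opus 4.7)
The plan is the two-stage strategy sketched in the abstract: first establish the CLT for the pure Ginibre (GinUE) case, then upgrade to the general four-moment-matching ensemble via a Lindeberg-type replacement argument. For the base case, products of independent complex Ginibre matrices form a determinantal point process with an explicit correlation kernel (of Akemann--Burda type), and both the limiting variance \eqref{eq:variancels} and the Gaussianity of $N_n[f]$ can be read off from an asymptotic analysis of this kernel in the bulk together with the standard CLT for smooth linear statistics of determinantal processes. I would take this Gaussian base case as the starting point and concentrate on the moment-swap step.

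For moment matching, I begin with Girko's Hermitization formula:
\begin{align}
\sum_{j=1}^n f(\lambda_j) = \frac{1}{2\pi} \int_{\mathbb{C}} \Delta f(z) \log\left|\det(X - zI)\right| d^2 z.
\end{align}
Because the entries of $X = n^{-M/2} X^{(1)} \cdots X^{(M)}$ are strongly dependent, I rewrite $\det(X-zI)$ by passing to the block companion linearization: an $Mn \times Mn$ matrix $Y(z)$ built from the independent factors $X^{(i)}$ and $z$ whose determinant is, up to an explicit factor, the characteristic polynomial of $X$ at $z$. The virtue of the linearization is that its entries are jointly independent across all blocks, which makes entry-by-entry replacement available.

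Now I execute a Lindeberg-type swap: replace the atoms of each $X^{(i)}$ one at a time by their complex Gaussian counterparts. For each swap, Taylor-expand $\log\left|\det(Y(z))\right|$ in the swapped entry to order four; the differences of expectations up to this order vanish by the four-moment matching hypothesis, and the remainder is bounded by a polynomial in the entries of the resolvent $Y(z)^{-1}$. The size of these resolvent entries is governed by the least singular value of $Y(z)$, and this is where the key technical lemma of the paper --- a polynomial-in-$n$ lower bound for the least singular value of the translated linearization, valid with overwhelming probability --- enters in an essential way. Combined with standard operator norm bounds on $Y(z)$ and the compact support of $\Delta f$ in the bulk $\{\tau_0 < |z| < 1-\tau_0\}$, which keeps $z$ away from the spectral edge uniformly, the Lindeberg remainder becomes $o(1)$ after summing over the $O(Mn^2)$ swaps and integrating against $\Delta f$. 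Tightness of $N_n[f]$ follows from the same moment comparison, completing the convergence in distribution.

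The main obstacle, as anticipated in the abstract, is the control of $\log\left|\det(Y(z))\right|$ near its logarithmic singularity. The fourth-order Lindeberg remainder involves negative powers of the smallest singular value of $Y(z)$, and the Hermitization integral itself is sensitive to any accumulation of small singular values. Without the least singular value lemma, the remainder cannot be shown to have negligible expectation. Establishing that lemma is itself non-trivial: the product structure destroys row independence of $X$, so classical Rudelson--Vershynin techniques do not apply directly and must be adapted to the linearized block matrix. This is precisely the technical content on which the entire four-moment universality argument rests.
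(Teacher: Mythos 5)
Your plan matches the paper's two-stage strategy: a Gaussian base case for products of complex Ginibre matrices exploiting the determinantal/rotationally-invariant structure (the paper does this via the Rider--Vir\'ag rotary-flow cumulant machinery), followed by a four-moment Lindeberg replacement through Girko Hermitization, the block linearization, resolvent Taylor expansion, and the least singular value estimate (Theorem \ref{linsmallsig}). Up to that level of detail the proposal is on the right track.

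However, there is a genuine gap in the final step, where you write that ``the Lindeberg remainder becomes $o(1)$ after summing over the $O(Mn^2)$ swaps and integrating against $\Delta f$.'' The resolvent-swapping remainder and the least singular value control both hold only \emph{with high probability for each fixed $z$}, not uniformly over the support of $\Delta f$, and in the complementary low-probability event $\log|\det Y(z)|$ can be of order $-n$ or worse. Consequently $\mathbf{E}[|\text{remainder}(z)|]$ cannot be bounded naively, and one cannot simply integrate over $z$ and sum over the $O(Mn^2)$ swaps. The paper circumvents this by first replacing the Girko integral by a Monte Carlo sum over $K=\lceil n^{k_0}\rceil$ random sample points (Lemma \ref{p1}), controlling the discretization error via an $L^2$-in-$z$ variance estimate that is extracted from Nemish's local $M$-fold circular law (estimate \eqref{varestmate}), and only then conditioning on the sample points and running the Tao--Vu resolvent swap on the resulting finite sum. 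This discretization and the appeal to the local law are not optional technical decorations---without them the high-probability estimates cannot be married to the integral over $z$---and they are absent from the proposal. The local law also enters in a second essential place: it supplies the a priori resolvent and eigenvalue-counting bounds (the paper's Lemma \ref{techlem}) needed to verify the hypotheses of the resolvent-swapping lemma; ``standard operator norm bounds on $Y(z)$'' together with the least singular value bound are not sufficient on their own to control $\|R_0(\sqrt{-1}\eta)\|_{(\infty,1)}$ uniformly over the range of $\eta$.
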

		
		We will prove this result by way of a more general result establishing four moment universality for linear statistics of products of iid matrices, along with an entirely separate result establishing the appropriate central limit theorem for products of Ginibre matrices (the proof of which is based on the rotary flow combinatorial approach pioneered by Rider and Vir\'ag for a single Ginibre matrix \cite{RV}). If a central limit theorem for linear statistics was established for products of matrices with some known subgaussian distribution besides the complex Gaussian distribution (for instance, the real Gaussian), then our machinery (specifically, Theorem \ref{fmlthm} appearing below) would establish the same central limit theorem for all matrices which match that base case to four moments.
		
		There are four remarks which must be made about the technical assumptions appearing in the statement of Theorem \ref{ginibre4mom}. The first is that the condition requiring the entries of each matrix to be identically distributed is needed only to provide an appropriate least singular value estimate (specifically, we will need this assumption to be able to apply Theorem \ref{linsmallsig} below, otherwise the condition is not needed -- the substitution of an alternate condition that preserves the statement of Theorem \ref{linsmallsig} will not affect the proof of this result).  The second remark concerns the subgaussian decay condition on the atom distributions from Definition \ref{def:iidmatrix}.  Naturally, we expect our main results to hold under weaker assumptions (such as subexponential tails or the existence of a high enough number of finite moments).   The subgaussian decay condition is mostly used to control certain events with high enough probability, and it seems likely that more technical methods could be utilized to relax this assumption.  Due to the already technical nature of our proofs, we have not pursued this direction.  
		
		The third remark is that the assumption of two derivatives on $f$, as well as the condition requiring support in the spectral bulk, is necessary only for the moment matching argument. If one is only interested in the linear statistics in the specific case of complex Gaussian atom distribution, then, as will be clear from the proof, both of these conditions can be relaxed (see Theorem \ref{cltforginibre} for details).  
		
		The fourth remark concerns the assumption on the support of $f$, specifically that $f$ is supported away from both the origin and the spectral edge $|z|=1$.  Under this assumption the term $\|f\|^2_{H^{1/2}(|z|=1)}$ appearing in \eqref{eq:variancels} is zero; we include this term in the limiting variance since it matches the analogous result obtained for products of complex Ginibre matrices (Theorem \ref{cltforginibre} below), which does not feature this restriction on $f$.  For Theorem \ref{ginibre4mom}, the condition on the support of $f$ appears to be an artifact of the proof, and is necessary mainly to ensure good behavior of the entries of the resolvent of the Hermitian linearization of the product matrix. Lifting this requirement is a direction for future work (a similar condition encountered in the course of the proof of the local $M$-fold circular law \cite{Nemish} has recently been lifted \cite{GNT}, at least at the origin, but the methods employed there do not appear to directly extend to our case), and we suspect \eqref{eq:variancels} to still be the limiting variance once this condition is lifted.  
		
		The same technique that establishes Theorem \ref{ginibre4mom} in the base case of complex Ginibre matrices can also be applied to other product matrix models, as long as the eigenvalues constitute a rotationally invariant determinantal point process and the ensuing combinatorics work out reasonably. As an illustration, we obtain a  limit theorem for linear statistics in the case of \textit{truncated unitary random matrices}. A truncated unitary random matrix is a random matrix produced by taking the top-left $n$ by $n$ submatrix of a random unitary $K$ by $K$ matrix (distributed according to the relevant Haar measure), where $\tau=(K-n)/n$ is a fixed parameter. The study of these matrices is motivated in part by connections to topics such as time evolution in quantum mechanics or chaotic scattering on mesoscopic devices; the interested reader is directed to \cite{SZ}, \cite{AB}, \cite{AI} and the references within.
		
		\begin{theorem}
			\label{unit4mom}
			Let $f:\mathbb{C}\to\mathbb{R}$ be a real valued polynomial test function, and let $U^{(1)}\dotsm U^{(M)}$ be the product of $M$ $n$ by $n$ jointly independent truncated unitary random matrices, with fixed truncation ratio $\tau\in(1/2,1)$. Then, for fixed $M$ and $\tau$, in the large dimensional limit $n \to \infty$ the associated centered linear statistic, $N_n[f]$, converges in distribution to the normal distribution with mean zero and the following limiting variance:
			\begin{align}
				\frac{1}{4\pi}\int_{|z|\leq (1/(1+\tau))^{M/2}} \left|\nabla f(z)\right|^2
				d^2z 
				+\frac{1}{2}
				\|f\|^2_{H^{1/2}(|z| = |1+\tau|^{-M/2})}.
			\end{align}
		\end{theorem}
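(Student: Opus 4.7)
The plan is to reduce the theorem to the ``rotary flow'' framework that Rider and Virág used for a single Ginibre matrix, adapting it to the product-of-truncated-unitaries setting. The starting point is that the eigenvalue process of $U^{(1)} \cdots U^{(M)}$ is determinantal on $\mathbb{C}$, with a rotationally invariant kernel of monomial form $K_n(z,w) = \sum_{j=0}^{n-1} (z \bar{w})^j / h_j$, where the $h_j = \int |z|^{2j} w_{M,\tau}(|z|) \, dA(z)$ are the squared norms of the monomials in $L^2(w_{M,\tau} \, dA)$ and $w_{M,\tau}$ is an explicit rotationally symmetric weight supported on the disk of radius $(1+\tau)^{-M/2}$. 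For $M=1$ this is the classical truncated-unitary Jacobi-type density on the disk; for general $M$ the weight is a Meijer $G$-function obtained as the Mellin convolution of $M$ such Jacobi weights, a formula I would import from the existing literature on products of truncated unitary matrices.

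With the kernel in hand, I would Fourier expand the test function in the angular variable, $f(re^{\sqrt{-1}\theta}) = \sum_k f_k(r) e^{\sqrt{-1} k \theta}$, and exploit the rotational invariance of $K_n$ to decouple the variance mode-by-mode: $\Var(N_n[f]) = \sum_{k \in \mathbb{Z}} V_{n,k}(f_k)$, where $V_{n,k}$ is an explicit one-dimensional functional of $f_k$ and of the moment ratios $h_{j+|k|}/h_j$. This is the combinatorial heart of the Rider-Virág argument: Kostlan's identity for the moduli squared (they are distributed as independent random variables whose densities are read off from $h_j$) together with the conditional uniformity of the arguments causes all cross-mode contributions to vanish, reducing the two-dimensional variance to a sum of one-dimensional pieces.

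The next step is the asymptotic analysis of each $V_{n,k}$ as $n \to \infty$. Using the explicit Beta/Meijer form of the $h_j$, Laplace-type asymptotics identify the ratios $h_{j+|k|}/h_j$ with smooth radial profiles on the disk. For low-$|k|$ modes this yields the bulk term $(4\pi)^{-1} \int |\nabla f|^2 \, d^2 z$ via a Riemann-sum argument in the radial variable, while for high-$|k|$ modes the contributions concentrate near the spectral edge $|z| = (1+\tau)^{-M/2}$ and their sum produces the boundary piece $\tfrac{1}{2} \|f\|^2_{H^{1/2}(|z| = (1+\tau)^{-M/2})}$. The restriction $\tau \in (1/2,1)$ is used here to keep the Jacobi weight and its $M$-fold Mellin convolution bounded away from degenerate edge behavior that would disrupt these asymptotics.

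To upgrade convergence of the variance to a central limit theorem, I would use the cumulant formula for determinantal point processes: the $m$-th cumulant of $N_n[f]$ is an explicit trace functional of iterated compositions of $K_n$ against $f$, and for $m \geq 3$ the same Fourier-mode decomposition combined with rotational symmetry forces the leading contributions to cancel, so that all cumulants of order $\geq 3$ vanish in the limit; polynomiality of $f$ makes the required kernel bounds especially clean. The main obstacle will be the uniform-in-$k$ asymptotic control of $h_j$ for the Meijer $G$-function weight: unlike the Ginibre setting, where moments are Gamma functions and Stirling expansions apply directly, here the moments are products of Beta functions, and obtaining expansions precise enough to justify interchanging the sum over $k$ with the $n \to \infty$ limit — especially for the boundary term, which couples all Fourier modes — is the delicate technical step.
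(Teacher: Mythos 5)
Your high-level plan shares the paper's framework: write the spectrum of $U^{(1)}\dotsm U^{(M)}$ as a rotationally invariant determinantal process with monomial kernel $K_n(z,w)=\sum_{t<n}(z\bar w)^t/h_t$, then run a cumulant analysis. But the implementation differs. You propose a Kostlan/Fourier-mode route (independent squared moduli plus an angular decomposition of $f$, variance decoupling as $\sum_k V_{n,k}$), whereas the paper plugs directly into the Rider--Vir\'ag ``rotary flow'' machinery: it writes the $k$-th cumulant as a sum over surjections $\sigma:[k]\to\to[m]$ of the functional $\Phi_m$, computes $\Phi_m$ for products of truncated unitaries, and then invokes Rider--Vir\'ag's combinatorial identities (their Lemmas 5, 8, 9) to force all cumulants of order $\geq 3$ to vanish. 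The two approaches are morally equivalent, and either can be made rigorous, but they trade off differently: the mode decomposition gives a transparent variance computation, while the surjection-sum route packages the cancellation of higher cumulants into pre-existing lemmas, which is precisely the step your sketch leaves unargued.

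Two specific points deserve attention. First, your claim that for $m\geq 3$ ``rotational symmetry forces the leading contributions to cancel'' is not automatic; for angular Fourier modes the phases are not conditionally independent given the moduli, so one cannot simply invoke Kostlan independence and a classical CLT, and the required cancellation is exactly what the Rider--Vir\'ag combinatorial lemmas establish. Your proposal asserts this cancellation rather than proving it, which is the main gap. Second, you anticipate the ``delicate technical step'' to be uniform-in-$k$ asymptotic control of Meijer-$G$-type moments and the interchange of $\sum_k$ with $n\to\infty$. This difficulty does not arise: the weight's radial moments factorize under Fubini into a product of one-dimensional Jacobi-type integrals, giving the elementary explicit formula $h_t=\pi\bigl[\Gamma(t+1)/\Gamma(t+\kappa+1)\bigr]^M$ with $\kappa=\lfloor\tau n\rfloor$, so only ratios of Gamma functions need asymptotics; and since the theorem is stated for polynomial $f$, only finitely many Fourier modes appear and no $k$-summation must be exchanged with the limit. (The paper also does not use $\tau\in(1/2,1)$ for the reason you suggest; the asymptotics of the Gamma ratios are benign for any fixed $\tau>0$.)
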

		
		Theorem \ref{unit4mom} deals only with polynomial test functions to simplify technical aspects of the argument; we believe a similar result should hold for more general test functions but this is outside the scope of the paper. The restriction to the disc $|z|\leq (1/(1+\tau))^{M/2}$ is explained by the limiting distribution of eigenvalues of products of truncated unitary matrices, which is defined by the following formula:
		\begin{align*}
			\phi^{M,\tau}(z)=\frac{1}{\pi M}\frac{\tau}{|z|^{2(1-1/M)}(1-|z|^{2/M})^2}
			\mbox{  }
			\mbox{on}
			\mbox{  }
			|z|\leq \left(\frac{1}{\tau+1}\right)^{M/2},
		\end{align*}
		where $\phi^{M,\tau}(z)$ vanishes on $|z|> \left(\frac{1}{\tau+1}\right)^{M/2}$. This formula is already in the literature \cite{AB}, but we encounter it during the course of the proof of Theorem \ref{unit4mom}.
		
		Our next result will be the local universality of the $k$-point correlation functions throughout the spectral bulk for products of iid factor matrices matching to four moments. Define the $k$-point correlation function $p^{(k)}$, for $1\leq k\leq n$, of the random points $\lambda_1,
	\ldots,\lambda_n \in \mathbb{C}$ by way of the following formula (with $\phi : \mathbb{C}^k \to \mathbb{C}$ denoting any arbitrary continuous, compactly supported test function):
		\begin{align*}
			\mathbf{E}\left[\sum_{i_1,\ldots,i_k \mbox{ distinct }}
			\phi(\lambda_{i_1},\ldots,\lambda_{i_k})\right]=
			\int_{\mathbb{C}^k}\phi(z_1,\ldots,z_k)p^{(k)}(z_1,\ldots,z_k)
			d^2z_1\ldots d^2z_k.
		\end{align*}
		We are abusing notation here by writing $p^{(k)}(z_1,\ldots,z_k)
		d^2z_1\ldots d^2z_k$, as $p^{(k)}$ is in general a measure and not a function (see \cite{TVPol} for a complete discussion of this issue). 
		
		The following universality result holds in the case of products $Z_n = X_n^{(1)} \cdots X_n^{(M)}$ of $n \times n$ iid matrices with complex atom distributions.  Before discussing the correlation functions for the eigenvalues of $Z_n$, we note that the largest eigenvalue of $Z_n$ is on the order of $n^{M/2}$, and we will need to first rescale the eigenvalues so that the typical spacing in the bulk is on the order of a constant.  There are several possible ways to rescale the eigenvalues.  We adopt the convention of taking the $M$-th root of each eigenvalue of $Z_n$ to create a new point process with $Mn$ points.  Formally, given the product $Z_n = X_n^{(1)} \cdots X_n^{(M)}$, we define the \emph{$M$-th root eigenvalue process associated with $Z_n$} as the point process on $\mathbb{C}$ consisting of the $Mn$ roots (counted with algebraic multiplicity) of the polynomial $z \mapsto \det(z^M I - Z_n)$, where $I$ denotes the identity matrix.  Clearly the eigenvalues of $Z_n$ can be recovered from this process by raising each root to the $M$-th power.  Our next result proves universality of the correlation functions corresponding to the $M$-th root eigenvalue process.  
		
		We define the open ball of radius $r>0$ centered at $z_0 \in \mathbb{C}$ as
		\[ B(z_0, r) := \{ z \in \mathbb{C} : |z - z_0| < r\}. \]
		\begin{theorem}
			\label{corruni}
			For $\beta=1,2$, let $Z^\beta_n=X_n^{(\beta,1)}\dotsm X_n^{(\beta,M)}$ be the product of $M$ independent iid $n$ by $n$ matrices with complex-valued entries and subgaussian decay. Assume that the factor matrices $X_n^{(1,i)}$ and $X_n^{(2,i)}$ (for $1\leq i\leq M$) match to four moments. Let $z_1,\ldots,z_k$ be complex numbers (which may depend on $n$) located in the spectral bulk $\tau_0\leq n^{-1/2}|z_i|\leq 1-\tau_0$ for some fixed positive constant $\tau_0>0$. 
			
			Let $p^{(k)}_\beta$ be the $k$-point correlation function of the $M$-th root eigenvalue process associated with $Z^\beta_n$. If $G:\mathbb{C}^k\to\mathbb{C}$ is a smooth function supported on $B(0,r_0)^k$ (for a small $r_0>0$ which depends on $\tau_0$) then for any sufficiently small positive constant $c_0>0$ the following estimate holds:
			\begin{align*}
				\Bigg| \int_{\mathbb{C}^k} G(w_1 ,\ldots, w_k)p_1^{(k)}(z_1+w_1,\ldots,
				z_k+w_k)d^2w_1\ldots d^2w_k\\
				-\int_{\mathbb{C}^k} G(w_1 ,\ldots, w_k)p_{2}^{(k)}(z_1+w_1,\ldots,
				z_k+w_k)d^2w_1\ldots d^2w_k
				\Bigg|\\
				\leq C_{[\tau_0,k, G]}n^{-c_0}.
			\end{align*}
			The constant $C_{[\tau_0,k,G]}$ depends on $\tau_0$, $k$, $M$, and (linearly) on the maximal $L^{\infty}$-norm bound on the first $2k+6$ derivatives of $G$.
		\end{theorem}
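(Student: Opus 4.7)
The plan is to follow the Lindeberg swapping framework of Tao–Vu that gave the $M=1$ case, but applied at the level of the linearization matrix rather than the product $Z_n$ itself. For the product $Z_n = X_n^{(1)}\cdots X_n^{(M)}$, introduce the block matrix
\[
Y = \begin{pmatrix} 0 & X_n^{(1)} & & \\ & 0 & X_n^{(2)} & \\ & & \ddots & \ddots \\ & & & 0 & X_n^{(M-1)} \\ X_n^{(M)} & & & & 0 \end{pmatrix},
\]
an $Mn \times Mn$ matrix whose spectrum (with suitable normalization by $n^{-1/2}$) is exactly the $M$-th root eigenvalue process. Thus correlation functions of the $M$-th root process are correlation functions of a matrix whose entries, although block-structured, are independent and iid within each block. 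This makes $Y$ the natural object to which a Lindeberg swap can be applied entry-by-entry.

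Next, I would reduce universality of correlation functions to universality of smooth linear statistics, following the standard identity (as in Tao–Vu) that lets one test $p^{(k)}$ against $G$ by computing a mixed cumulant/expectation of products of linear statistics of nearby bumps. Each such linear statistic is controlled via Girko's Hermitization: for a smooth compactly supported $\varphi$,
\[
\sum_{j} \varphi(\mu_j) \;=\; \frac{1}{2\pi}\int_{\mathbb{C}} \Delta\varphi(\zeta) \,\log\bigl|\det(Y - \zeta I)\bigr|\, d^2\zeta,
\]
where $\mu_j$ are the eigenvalues of (the normalized) $Y$. Thus everything reduces to proving that, after matching entries to four moments, the joint distribution of integrals of $\log|\det(Y-\zeta I)|$ against a fixed family of smooth weights is stable up to $n^{-c_0}$.

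For the swap itself, split the log-determinant as
\[
\log\bigl|\det(Y-\zeta I)\bigr| \;=\; \tfrac{1}{2}\!\!\sum_{\sigma_j \ge n^{-A}}\!\log \sigma_j^2 \;+\; \tfrac{1}{2}\!\!\sum_{\sigma_j < n^{-A}}\!\log \sigma_j^2,
\]
where $\sigma_j$ are the singular values of $Y-\zeta I$. The large-singular-value part is a smooth functional of the Stieltjes transform of $(Y-\zeta I)^*(Y-\zeta I)$, which can be compared under a single entry swap by Taylor expansion to fourth order in the swapped entry, controlled by resolvent bounds at spectral parameter $\eta$ of order $n^{-1+\epsilon}$ (these follow from the local $M$-fold circular law and the linearization analysis of Nemish). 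Since entries are matched to fourth order, the leading four Taylor terms cancel in expectation and one obtains an error $O(n^{-5/2+o(1)})$ per swap, summable across the $O(Mn^2)$ entries. The small-singular-value part is exactly where the truncation is needed: it is bounded in absolute value by a constant times $A \log n$ times the number of singular values below $n^{-A}$, which by Theorem \ref{linsmallsig} (the advertised key least-singular-value bound for the translated linearization) is controlled with very high probability.

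The main obstacle and the place where the product structure really bites is the fourth step above: the least singular value bound for $Y - \zeta I$. Unlike in the $M=1$ case, the entries of $Y - \zeta I$ are not independent across the full matrix, and the standard geometric small-ball / Littlewood–Offord machinery of Tao–Vu and Rudelson–Vershynin must be redone in a way that respects the block structure (isolating a single factor matrix $X_n^{(i)}$ while conditioning on the others). Once Theorem \ref{linsmallsig} is in place, the rest of the argument is the routine adaptation of the swapping scheme, and the exponent $c_0$ and constant $C_{[\tau_0,k,G]}$ emerge from tracking the number of derivatives of $G$ needed (whence $2k+6$) and the polynomial exponents in the resolvent and singular-value bounds.
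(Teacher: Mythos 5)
Your proposal follows essentially the same route as the paper: linearize $Z_n$ to $Y$, identify the $M$-th root eigenvalue process with the spectrum of $Y$ (equivalently $|f_n^\beta(z)|=|\det(Y^\beta(z))|$), invoke the Tao--Vu random polynomial universality machinery, Girko Hermitization, resolvent swapping with four-moment matching, and Theorem~\ref{linsmallsig} to control the small singular values. The paper packages the reduction step differently --- rather than re-deriving it via ``mixed cumulants of bumps,'' it applies Theorem 2.1 of \cite{TVPol} as a black box with three hypotheses (non-degeneracy, non-clustering, comparability of log-magnitudes); of these, the one your sketch omits is the non-clustering condition $N_{B(z_i,r)}(f_n^\beta)\leq C_1 n^{1/A}r^2$, which the paper verifies using Nemish's local $M$-fold circular law applied to the linearization. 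You already invoke that local law to get the resolvent bounds needed for the swap, so the extra verification is minor, but it is a required hypothesis and should be stated explicitly.
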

		
		While this result focuses on the $M$-th root eigenvalue process, our methods can be generalized to other scaling conventions for the eigenvalues as well; see Remark \ref{rem:scaling} for further details.  
		
		The proof of Theorem \ref{corruni} is based on an application of Theorem 2.1 in \cite{TVPol}, and may be particularly of interest in the case of complex Ginibre factor matrices.  Indeed, the derivation of the correlation functions for the product of independent complex Ginibre matrices can be found in \cite{AB2}, which finds that the limiting correlation functions match those of a single Ginibre matrix away from the origin.  Specifically, the eigenvalues of the product of $M$ independent $n \times n$ complex Ginibre matrices form a determinantal point process with kernel $K_n^{(M)}$ defined by the formula 
		\begin{align}
		K_n^{(M)}(z, w)=\sqrt{w_M(z)w_M(w)}\sum_{k=0}^{n-1}
		\frac{1}{(\pi k!)^M}(z\bar{w})^k. 
		\end{align}
		The weight function $w_M(z)$ appearing here depends only on the modulus of $z$ and is defined through the Meijer G-function, see Equation 2.3 in \cite{AB2}. As shown in \cite{AB2}, the kernel $K^{M}_n$ obeys the following scaling limit in the spectral bulk, away from the origin: 
		\begin{align*}
		\lim_{n\to\infty}M^{2-M}|\xi_i\xi_j|^{M-1}K_n^{(M)}&
		((\xi_i/\sqrt{M})^{M},(\xi_j/\sqrt{M})^M)\\
		&=\frac{1}{\pi} \left( \frac{\xi_i\overline{\xi_j}}{|\xi_i\xi_j|} \right)^{(1-M)/2}
		\exp\left(
		-\frac{1}{2}\left(|\xi_i|^2+|\xi_j|^2+\xi_i\overline{\xi_j} \right)
		\right).
		\end{align*} 
Up to the phase factor in front (which is irrelevant after taking determinants), this limiting kernel is the same as the limiting bulk behavior of a single Ginibre matrix (when $M=1$).  
		Theorem \ref{corruni} implies that this behavior should be universal for other matrix products matching the Ginibre ensemble to four moments.
		Both the spectral edge and the origin are dealt with in \cite{AB2} as well, but our universality result does not extend to these cases. It would be interesting to see if it can be generalized to include these cases as well.
		
		In the case of real entries the spectrum and therefore the correlation functions split into real and imaginary components.  For a thorough exposition on the spectrum in the real case, see for instance  \cite{TVPol}.  For such a point process, we define $p^{k,\mathit{l}}$ to be the correlation function for $k$ real  (which we will denote $\zeta_{i,\mathbb{R}}$) and $\mathit{l}$ complex points in the upper half place (which we will denote $\zeta_{j,\mathbb{C}_+}$). To be precise, for continuous test functions $\phi:\mathbb{R}^k\times\mathbb{C}^\mathit{l}\to\mathbb{C}$ with compact support we require:
		\begin{align*}
			&\mathbf{E}\left[\sum_{i_1,\ldots,i_k \mbox{ distinct }}
			\sum_{j_1,\ldots,j_\mathit{l} \mbox{ distinct }}
			\phi(\zeta_{i_1,\mathbb{R}},\ldots,\zeta_{i_k,\mathbb{R}},
			\zeta_{j_1,\mathbb{C}_+},\ldots,\zeta_{j_\mathit{l},\mathbb{C}_+})
			\right]\\ &=
			\int_{\mathbb{R}^k}\int_{\mathbb{C}^\mathit{l}_+}
			\phi(x_1,\ldots,x_k, z_1,\ldots,z_\mathit{l})	
			p^{k,\mathit{l}}(x_1,\ldots,x_k, z_1,\ldots,z_\mathit{l})	
			d^2 z_1\ldots d^2 z_\mathit{l}dx_1\ldots dx_k.
		\end{align*}
		This defines the mixed correlation functions $p^{k,\mathit{l}}$ over regions of the form $\mathbb{R}^k\times\mathbb{C}^\mathit{l}_+$. These function are then extended to include $z_i\in\mathbb{C}_{-}$ by requiring that they be symmetric under conjugation of any complex argument.
		
		The following universality result for the $M$-th root eigenvalue process holds in the case of products of matrices whose entries have real distributions. 
		\begin{theorem}
			\label{corruni2}
			For $\beta=1,2$, let $Z^\beta_n=X_n^{(\beta,1)} \dotsm X_n^{(\beta,M)}$ be the product of $M$ independent iid $n$ by $n$ matrices with identically distributed, real-valued entries and subgaussian decay, and let $x_1,\ldots,x_k \in\mathbb{R}$ and $z_1,\ldots,z_\mathit{l}\in\mathbb{C}$ be numbers (which are allowed to depend on $n$) located in the spectral bulk (meaning $\tau_0\leq n^{-1/2}|z_i|\leq 1-\tau_0$ and  $\tau_0\leq n^{-1/2}|x_i|\leq 1-\tau_0$ for some fixed $\tau_0 > 0$).
			
			Let $p^{k, \mathit{l}}_\beta$ be the mixed $(k,\mathit{l})$-correlation function for the $M$-th root eigenvalue process associated with $Z^\beta_n$, and assume that the factor matrices $X_n^{(1,i)}$ and $X_n^{(2,i)}$ (for $1\leq i\leq M$) match to four moments, and also that the correlation functions for one of the two processes satisfy the following estimate in the spectral bulk (which is the annulus defined by the equations $\tau_0\leq n^{-1/2}|z_i|\leq 1-\tau_0$ and $\tau_0\leq n^{-1/2}|x_i|\leq 1-\tau_0$):
			\begin{align}
				p_{\beta}^{2,0}(x_1,x_2)<C,\\
				p_{\beta}^{0,1}(z_1)<C.
			\end{align}
			If $G:\mathbb{R}^{k}\times\mathbb{C}^\mathit{l}\to\mathbb{C}$ is a smooth function supported on $[-r_0, r_0]^k \times B(0,r_0)^\mathit{l}$ (for some small $r_0>0$ depending on $\tau_0$), then for any sufficiently small $c_0>0$ the following estimate holds (where we write $d\mu=dy_1\cdots dy_kd^2w_1\cdots d^2w_\mathit{l}$):
			\begin{align*}
				\Bigg|\int_{\mathbb{R}^k}\int_{\mathbb{C}^\mathit{l}} G(y_1 ,\ldots, y_k, w_1,\ldots,w_{\mathit{l}})
				p_1^{k,\mathit{l}}(x_1+y_1,\ldots,x_k+y_k,z_1+w_1,\ldots
				,z_\mathit{l}+w_\mathit{l})d\mu\\
				-\int_{\mathbb{R}^k}\int_{\mathbb{C}^\mathit{l}}
				G(y_1 ,\ldots, y_k, w_1,\ldots,w_{\mathit{l}})
				p_2^{k,\mathit{l}}(x_1+y_1,\ldots,x_k+y_k,z_1+w_1,\ldots,
				z_\mathit{l}+w_\mathit{l})d\mu
				\Bigg|\\
				\leq C_{[\tau_0,k, \mathit{l}, G]}n^{-c_0}.
			\end{align*}
			The constant $C_{[\tau_0,k,\mathit{l},G]}$ depends on $C$, $k$, $\mathit{l}$, $\tau_0$, $M$, and (linearly) on the maximal $L^\infty$-norm bound on the first $2(k+l)+6$ derivatives of $G$.
		\end{theorem}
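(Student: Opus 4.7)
The plan is to follow the same template used to prove Theorem \ref{corruni} in the complex case, adapting each step to the real setting in which the $M$-th root eigenvalue process splits into real and complex components and in which the Tao--Vu comparison principle must be applied in its mixed real/complex form.

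First I would linearize the product by passing from $Z^\beta_n$ to the $Mn \times Mn$ block companion matrix
\[ Y^\beta_n = \begin{pmatrix} 0 & X_n^{(\beta,1)} & & & \\ & 0 & X_n^{(\beta,2)} & & \\ & & \ddots & \ddots & \\ & & & 0 & X_n^{(\beta,M-1)} \\ X_n^{(\beta,M)} & & & & 0 \end{pmatrix}, \]
whose characteristic polynomial is $z\mapsto \pm\det(z^M I - Z^\beta_n)$. The $M$-th root eigenvalue process is therefore the ordinary real/complex spectrum of $Y^\beta_n$, and the mixed correlation functions $p^{k,\ell}_\beta$ in the statement are exactly the real/complex mixed correlation functions of that spectrum. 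The advantage is that $Y^\beta_n$ has \emph{independent} entries (zero on the diagonal blocks and independent on the prescribed off-diagonal blocks), which is the setting in which the four-moment comparison machinery of \cite{TVPol} is formulated.

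Next I would apply the real-variable analogue of Theorem 2.1 in \cite{TVPol} to the pair $(Y^1_n, Y^2_n)$, swapping one entry at a time through the Lindeberg strategy. That theorem takes as input: (i) subgaussian decay of the entries, which is built into Definition \ref{def:iidmatrix}; (ii) four-moment matching of the entries of $Y^1_n$ and $Y^2_n$, which follows from the hypothesis on each factor pair $X_n^{(1,i)}$, $X_n^{(2,i)}$ (the zero blocks match trivially); (iii) a polynomial lower tail bound on the least singular value of the translated matrix $Y^\beta_n - zI$, uniformly for $z$ in a small neighbourhood of each target point in the spectral bulk, which is exactly the key technical lemma on linearized products advertised in the abstract; and (iv) a polynomial upper bound on the low-order correlation functions of one of the two ensembles, which in the real case must be posed separately for the real axis and for the interior of the spectrum. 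The hypotheses $\rho^{2,0}_\beta(x_1,x_2)<C$ and $\rho^{0,1}_\beta(z_1)<C$ in the statement supply precisely input (iv). Granted these inputs, the standard Taylor expansion of $\int G\, dp^{k,\ell}_\beta$ in the contribution of a single entry up to fourth order, combined with the resolvent bounds for $(Y^\beta_n - zI)^{-1}$ furnished by (iii), yields an $O(n^{-c_0})$ swapping error once one sums over the $O(n^2)$ entries of each of the $M$ factors.

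The main obstacle is the real/complex split of the spectrum: real eigenvalues of $Y^\beta_n$ concentrate on $\mathbb{R}$ with macroscopic density of order $\sqrt{n}$, so the naive Lindeberg summation does not close without an a priori density bound of the form $\rho^{2,0}_\beta<C$, $\rho^{0,1}_\beta<C$. This is exactly why those hypotheses appear explicitly in the statement and are absent from the purely complex analogue Theorem \ref{corruni}. Once they are granted, the linearization and the least-singular-value lemma package the argument into essentially the same form as the complex case, and the final $n^{-c_0}$ bound with the advertised dependence of $C_{[\tau_0,k,\ell,G]}$ on derivatives of $G$ follows from bookkeeping in the Taylor remainder.
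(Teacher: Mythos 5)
Your proposal correctly reconstructs the paper's argument: linearize to $Y^\beta_n$, apply the real-coefficient analogue of the Tao--Vu random-polynomial universality theorem (Theorem 3.1 in \cite{TVPol} rather than the Theorem 2.1 invoked for Theorem \ref{corruni}), and verify its hypotheses via Theorem \ref{linsmallsig} (least singular value of the shifted linearization), four-moment matching with resolvent swapping, and the a priori correlation-function bounds $\rho^{2,0}_\beta<C$, $\rho^{0,1}_\beta<C$ to control clustering of the real and complex eigenvalues. This is exactly the route the paper takes --- the paper dispatches Theorem \ref{corruni2} in a single sentence by reducing to the proof of Theorem \ref{corruni} --- and your expanded account (including the role of the real/complex spectral split in necessitating the extra correlation-function hypothesis) is a fair filling-in, modulo the minor point that the Taylor expansion and resolvent swapping serve to verify the log-magnitude comparability condition of the Tao--Vu theorem rather than being applied directly to $\int G\,dp^{k,\mathit{l}}_\beta$.
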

		
		Analogously to the complex case, this result may be of particular interest when comparing to the product of $M$ real Ginibre factor matrices; concrete expressions for the correlation kernels (which are expressed in terms of Pfaffians) as well as the correlations between real eigenvalues and the correlations between complex eigenvalues are derived and can be found in \cite{FI, IK}.
		
		As was the case with Theorem \ref{ginibre4mom}, the spectral bulk requirement (namely, that $z_i$ is chosen such that the estimate $\tau_0 \leq n^{-1/2}|z_i| \leq 1-\tau_0$ holds) which appears in both Theorem \ref{corruni} and Theorem \ref{corruni2} is necessary to invoke aspects of Nemish's proof of the local M-fold circular law for the spectral bulk \cite{Nemish}. In particular, this restriction appears to be an artifact of the method of proof and the authors believe that it should be possible to lift this requirement; an inspection of the proofs of Theorems \ref{corruni} and \ref{corruni2} should make clear exactly where the difficulties lie.
		
		The proofs of the preceding results (with the exception of Theorem \ref{unit4mom}) rely in large part on a least singular value estimate, Theorem \ref{linsmallsig} below, which is the main technical advance in the present volume and may be useful in a variety of other contexts. To present this result properly we first must discuss \textit{linearization}, one of the most useful techniques in the study of products of random matrices, which allows one to bypass many of the difficulties associated with the product structure. In a linearization argument one studies the product $X = X^{(1)}X^{(2)}\dotsm X^{(M)}$ by considering instead an associated $Mn$ by $Mn$ linearization block matrix $Y$, defined below:
			\begin{align*}
				Y=\left(
				\begin{array}{cccccc}
					0 & X^{(1)} & 0 & \cdots & \cdots & 0 \\
					0 & 0 & X^{(2)} & \ldots & \cdots & 0 \\
					0 & 0 & 0   & X^{(3)} & \cdots & 0 \\
					\vdots & \vdots & \vdots & \vdots & \ddots & \vdots \\
					0 & 0 & 0 & 0 & \cdots & X^{(M-1)} \\
					X^{(M)} & 0 & 0 & 0 & \cdots & 0 \\
				\end{array}\right).\end{align*}	
		As observed by Burda, Janik and Waclaw \cite{BJW}, if $\lambda_{1},\ldots,\lambda_{n}$ are the eigenvalues of $X$, then each $\lambda_{k}$ is an eigenvalue of $Y^{M}$ with multiplicity $M$. Indeed, to see this it suffices to raise $Y$ to the $M$-th power and notice that the diagonal block entries are cyclic permutations of $X^{(1)}\dotsm X^{(M)}$. It is this link that, in some situations, allows one to study the linearization matrix in lieu of the actual product matrix (which may be substantially more cumbersome). 
			
		Frequently it happens the matrix of interest is not $Y$ but is actually $Y-zI$, where $I$ is the identity matrix and $z$ is some parameter in the complex plane. Our next result deals primarily with the smallest singular value of such matrices:
			
			\begin{theorem}
			\label{linsmallsig}
			Let $X^{(i)}$, for $1\leq i \leq M$, denote a family of jointly independent $n$ by $n$ iid random matrices with subgaussian decay (with potentially different atom distributions) and, for any $z\in\mathbb{C}$, define the $Mn$ by $Mn$ matrix $Y(z)=Y-zI$ as below:
			\[Y(z)= \left( \begin{array}{ccccc}
			-zI & X^{(1)} &  0 & \cdots & 0 \\
			0 & -zI & X^{(2)} & \cdots & 0 \\
			\vdots & \vdots & \vdots & \ddots & \vdots \\
			0 & \cdots & 0 & -zI & X^{(M-1)} \\
			X^{(M)} & 0 & \cdots & 0 & -zI 
			\end{array} \right). \] 
			Let $\sigma_{1}(Y(z))$ denote the least singular value of $Y(z)$. For any sufficiently small constant $A > 0$, there exists $t_0 > 0$ (depending on $A$, $M$, and the distributions of the entries) such that if $n^{1/2-t_0} \leq |z| \leq n^{1/2+t_0} $, then 
			\begin{align}
				\mathbf{P}\left\{\sigma_{1}\left(Y(z)\right)\leq {n^{-1/2-A}}
				\right\}\leq C n^{-KA}.
			\end{align}
			Here, the constant $C$  depends only on $M$, $A$, and the distributions of the entries; $K$ depends only on $M$ and the distributions of the entries. The requisite smallness of $A$ depends on $M$ and the distributions of the entries.  
				\end{theorem}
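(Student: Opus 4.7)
The plan is to extend the Tao--Vu least singular value estimate for a single shifted iid matrix (the case $M=1$) to the cyclic linearization $Y(z)$, following the Rudelson--Vershynin compressible/incompressible dichotomy. Writing $v=(v_1,\ldots,v_M)$ with each $v_k\in\mathbb{C}^n$, one has
\[ \|Y(z)v\|^2=\sum_{k=1}^{M}\|X^{(k)}v_{k+1}-z v_k\|^2 \quad (\text{with } v_{M+1}:=v_1), \]
and we wish to rule out, with probability at least $1-Cn^{-KA}$, that any unit $v$ makes the right-hand side smaller than $n^{-1-2A}$. I would split the unit sphere of $\mathbb{C}^{Mn}$ into vectors that are compressible (some block $v_k$ is concentrated on a small coordinate set, in the Rudelson--Vershynin sense) and those that are incompressible, and handle each class separately.

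For the compressible part, I would combine the bound $\|Y(z)\|=O(\sqrt n)$ (which holds with overwhelming probability since $|z|\leq n^{1/2+t_0}$ and each $\|X^{(k)}\|=O(\sqrt n)$ by subgaussian concentration) with a volumetric $\varepsilon$-net covering of the set of compressible vectors, whose cardinality can be made $\exp(c_1 n)$ for an arbitrarily small $c_1$. For each fixed net point $w$, at least one of the blocks of $Y(z)w$ is a linear form in the iid entries of some $X^{(k)}$ applied to a deterministic vector, hence enjoys a Gaussian-type lower tail. A union bound rules out compressible near-null vectors except on an event of probability $\exp(-c_2 n)$.

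For incompressible vectors, I would use the standard identity $\sum_{i=1}^{Mn}\sigma_i(Y(z))^{-2}=\sum_{j=1}^{Mn}\operatorname{dist}(R_j,H_j)^{-2}$, where $R_j$ is the $j$-th row of $Y(z)$ and $H_j$ is the span of the others, so that $\sigma_1(Y(z))\leq n^{-1/2-A}$ forces $\operatorname{dist}(R_j,H_j)\leq n^{-A}$ for some $j$; it then suffices to bound this event by $Cn^{-KA-1}$ for each fixed $j$ and union-bound. Picking a unit left-null vector $u=(u_1,\ldots,u_M)$ of $H_j$, and with $j$ in the $k$-th row block at local index $j'$, the distance takes the explicit form $|-z(u_k)_{j'}+\langle X^{(k)}_{j,\cdot},u_{k+1}\rangle|$. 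Conditional on $H_j$, the normal $u$ is determined while the row $X^{(k)}_{j,\cdot}$ remains independent of $u$; a Rudelson--Vershynin small-ball probability bound then gives the required concentration, \emph{provided} $u_{k+1}$ is itself incompressible with $\|u_{k+1}\|$ bounded away from $0$.

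The main obstacle is therefore to guarantee this incompressibility of the relevant block of $u$. Pigeonhole furnishes some $k$ with $\|u_{k+1}\|\geq 1/\sqrt M$, but a priori that block could itself be compressible. The key observation is that $u$ approximately satisfies the cyclic system $z u_k \approx X^{(k)}u_{k+1}$, so in the range $n^{1/2-t_0}\leq|z|\leq n^{1/2+t_0}$ the maps $z^{-1}X^{(k)}$ are sufficiently well-conditioned that incompressibility propagates around the cycle. I would establish this via an auxiliary lemma showing that, with overwhelming probability, no $X^{(k)}$ sends an incompressible unit vector to a highly compressible one, together with control on how the shift $-z I$ interacts with $X^{(k)}$ in the prescribed range of $|z|$. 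Combining the compressible and incompressible estimates, applying union bounds over $j$ and $k$, and carefully tracking the small-ball exponent then yields the stated bound, with $K$ proportional to the Rudelson--Vershynin concentration exponent and the admissible $A$ ultimately determining the permissible $t_0$.
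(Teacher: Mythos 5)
Your plan is correct on the compressible side, and on the incompressible side you correctly isolate the crux: to run the small-ball estimate on $\mathrm{dist}_j$ you must ensure that the relevant block $u_{k+1}$ of the unit normal is incompressible and not too small, and pigeonhole alone does not deliver this. But the auxiliary lemma you propose as the fix --- that with overwhelming probability no $X^{(k)}$ sends an incompressible unit vector to a highly compressible one --- is not available. As a uniform statement it is false: for any index set $S$ with $|S|=\lfloor an\rfloor$, every $w$ in the kernel of the $(n-|S|)\times n$ submatrix of $X^{(k)}$ formed by the rows outside $S$ has $X^{(k)}w$ supported on $S$, hence compressible after normalization, and these kernels are $an$-dimensional random subspaces that will generically contain incompressible $w$. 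Even leaving aside the counterexample, a uniform statement cannot be reached by a net-plus-union-bound strategy of the kind you use in the compressible case, because $\varepsilon$-nets of the incompressible vectors have cardinality on the order of $(C/\varepsilon)^{2n}$ while the per-point small-ball bound is only $\exp(-cn)$ with $c$ fixed. And a non-uniform version runs into the obstacle that $u_{k+1}$ depends on $X^{(k)}$ itself, since the normal is determined by all $Mn-1$ undeleted rows; the only randomness left after fixing the normal is the single deleted row of one factor, which is not enough to drive a propagation step.

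The paper sidesteps propagation entirely. It normalizes so that $\|\Phi^{(1)}\|_2=1$, conditions on $X^{(1)},\ldots,X^{(M-1)}$, and uses only the remaining randomness in $\tilde{X}^{(M)}$, the first $n-1$ rows of the last factor. Two separate lemmas then supply the inputs to the small-ball step. Lemma \ref{conkboundlem1} bounds $\|u^{(1)}\|_2$ below for a unit normal $u$ by constructing (Lemma \ref{vtauexist}) a subspace $V_\tau$ of dimension $O(n^{1-\tau})$ spanned by the singular vectors of the \emph{product} $X^{(1)}\cdots X^{(M-1)}$ corresponding to small singular values; the dimension bound, coming from the intermediate singular value estimate (Lemma \ref{smallsingval}), is what makes the requisite net admissibly small, and an anticoncentration argument in $\tilde{X}^{(M)}$ then rules out $u^{(M)}$ lying near the set of vectors the product crushes. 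Lemma \ref{nocompcomponent} establishes incompressibility of $\Phi^{(1)}$ by netting over pairs $(v',u_{v'})$ where $v'$ runs over a net of the \emph{compressible} vectors (cardinality $\exp(o(n))$, so admissible) and $u_{v'}$ is the $\tilde{X}^{(M)}$-determined lift, then applying small-ball in $X^{(1)},\ldots,X^{(M-1)}$. The structural devices you are missing are precisely the intermediate singular value bound and the low-dimensional space $V_\tau$ built from it; these have no analogue in the propagation scheme and are what make the nets tractable.
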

			
			Previous efforts to control the smallest singular value of this matrix only established a lower bound of the form $n^{-B}$, where $B$ was (potentially a quite large) positive constant which was not explicitly determined \cite{OS} \cite{OSVR} \cite{GT}. 
			
			The random matrix theory literature contains many other results concerning the least singular value and related singularity probability bounds for various models of random matrices including \cite{AR, BD, BVW, Cook, Edel, LLTTJ, LPRTJ, R, RVleast, SST, TV1, TV3, TVbern, TVlo, TVsing, V, V2} and references therein.  The approach to proving Theorem \ref{linsmallsig} is roughly modeled on the approach taken to prove a similar result for a single independent entry random matrix in \cite{TV3}, however the block structure introduces substantial new difficulties that must be overcome. Namely, to control the smallest singular value we would like to control the distance between a given row of $Y(z)$ (say, the last row) and the span of the remaining rows, exploiting the independence between the last row and others through a suitable anti-concentration estimate. The difficulty is that the last row of $Y(z)$ contains quite a lot of zeros placed there by the block structure itself, and if the normal to the hyperplane spanned by the remaining rows places too much weight away from coordinates on which the last row is supported then no anti-concentration estimate could possibly be applied and the entire argument collapses. On the one hand such a scenario does not sound particularly probable (as it would seem to require significant coordination between the independent factor matrices), on the other hand there does not appear to be a simple way to easily rule out such occurrences. The bulk of the argument is occupied with this problem, which is not encountered in the case of a single iid random matrix (where all entries of all rows are random) and whose resolution depends on careful consideration of the interplay between the block structure and the linear spaces spanned by the small singular vectors of the individual factor matrices.
				
			As a corollary, Theorem \ref{linsmallsig} implies a least singular value bound on the original product matrix:
			\begin{corollary}
			\label{smallsigvalcor}
			Let $X_{1},\ldots, X_{M}$ be independent $n$ by $n$ iid random matrices with subgaussian decay (featuring potentially different atom distributions). For any $z \in \mathbb{C}$, let $\sigma_{1}(X(z))$ denote the smallest singular value of the random matrix $X(z)=X_1 \dotsm X_M-z^MI$, where $I$ denotes the identity matrix.  For any sufficiently small constant $A> 0$, there exists $t_0 > 0$ (depending on $A$, $M$, and the distributions of the entries) such that if $n^{1/2-t_0} \leq |z| \leq n^{1/2+ t_0}$, then 
			\begin{align}
				\mathbf{P}\left\{ 
				\sigma_{1}(X(z)) \leq |z|^{M-1}n^{-1/2-A}
				\right\}
				\leq C n^{-KA}.
			\end{align}
			Here, the constant $C$  depends only on $M$, $A$, and the distributions of the entries; $K$ depends only on $M$ and the distributions of the entries.  The requisite smallness of $A$ depends on $M$ and the distributions of the entries.
			\end{corollary}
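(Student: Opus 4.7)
The plan is to deduce Corollary \ref{smallsigvalcor} from Theorem \ref{linsmallsig} by an explicit comparison between the least singular values of $X(z) = X^{(1)} \cdots X^{(M)} - z^M I$ and its linearization $Y(z)$. The key observation is that an approximate null vector for $X(z)$ can be lifted to an approximate null vector for $Y(z)$ using the block structure of $Y(z)$, with only a predictable $|z|$-dependent loss.

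Concretely, I would let $u \in \mathbb{C}^n$ be a unit vector achieving $\|X(z) u\| = \sigma_1(X(z))$, and define the block vector $v = (v_1,\ldots,v_M) \in \mathbb{C}^{Mn}$ by setting $v_1 = u$ and, for $2 \leq i \leq M$,
\begin{align*}
v_i = z^{-(M-i+1)} X^{(i)} X^{(i+1)} \cdots X^{(M)} u.
\end{align*}
A direct block-wise calculation then shows that $Y(z) v$ has every block row equal to zero except the first, which equals $z^{-(M-1)} X(z) u$. Indeed, for each middle block row $i = 2,\ldots,M-1$, the contributions $-z v_i$ and $X^{(i)} v_{i+1}$ cancel by construction; for the last block row $X^{(M)} v_1$ and $-z v_M$ cancel; and the first block row produces $-z u + z^{-(M-1)} X^{(1)} \cdots X^{(M)} u = z^{-(M-1)} X(z) u$.

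Since $\|v\|^2 \geq \|v_1\|^2 = 1$, the variational characterization of the least singular value yields
\begin{align*}
\sigma_1(Y(z)) \leq \frac{\|Y(z) v\|}{\|v\|} \leq |z|^{-(M-1)} \|X(z) u\| = |z|^{-(M-1)} \sigma_1(X(z)).
\end{align*}
Therefore the event $\{\sigma_1(X(z)) \leq |z|^{M-1} n^{-1/2-A}\}$ is contained in the event $\{\sigma_1(Y(z)) \leq n^{-1/2-A}\}$, and the claim follows immediately from Theorem \ref{linsmallsig} applied in the same annular regime $n^{1/2-t_0} \leq |z| \leq n^{1/2+t_0}$, inheriting the constants $C$, $K$, and $t_0$ from that theorem.

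This argument is essentially a one-line reduction once the lift is written down, so there is no substantial obstacle; all the real work has been carried out in the proof of Theorem \ref{linsmallsig}. The only point meriting a brief check is that the scalar inversions by $z$ in the definition of $v_i$ are legitimate, which is automatic in the stated annular range $|z| \geq n^{1/2-t_0}$ for $n$ large, and that no implicit $|z|$-dependence leaks into the constants, which is indeed the case since every $|z|$-factor appears cleanly on the threshold side of the inequality.
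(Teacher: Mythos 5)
Your proof is correct. The block-wise computation is right: with $v_1 = u$ and $v_i = z^{-(M-i+1)} X^{(i)}\cdots X^{(M)} u$, the middle and last block rows of $Y(z)v$ vanish and the first equals $z^{-(M-1)} X(z) u$, so $\sigma_1(Y(z)) \leq \|Y(z)v\|/\|v\| \leq |z|^{-(M-1)}\sigma_1(X(z))$ and the desired event is contained in $\{\sigma_1(Y(z)) \leq n^{-1/2-A}\}$.

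The route is genuinely a bit different from the paper's. The paper argues on the inverse side: it identifies the top-left $n\times n$ block of $Y(z)^{-1}$ as $\bigl(\tfrac{1}{z^{M-1}} X_1\cdots X_M - z\bigr)^{-1}$ and then uses the fact that the operator norm of a submatrix is bounded by the operator norm of the full matrix, so $\|Y(z)^{-1}\|_{op} \leq n^{1/2+A}$ forces $\sigma_1(X(z)) \geq |z|^{M-1} n^{-1/2-A}$. You instead work directly with the minimum-singular-value variational formula, lifting an approximate null vector of $X(z)$ to a test vector for $Y(z)$; this avoids computing any block of $Y(z)^{-1}$ and sidesteps any (implicit) invertibility consideration, making it a shade more elementary. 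Both arguments are one-step reductions from Theorem \ref{linsmallsig}, inherit the same constants and $t_0$, and establish exactly the same inequality, so the difference is purely one of presentation.
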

					
		The remainder of this paper is organized as follows. In Section \ref{singval_sec}, we present the proof of Theorem  \ref{linsmallsig} and the proof of its corollary for non-linearized product matrices. In Section \ref{linstat_sec}, we prove Theorem \ref{ginibre4mom} in the base case of Gaussian entries, and also provide the proof of Theorem \ref{unit4mom}, which uses similar techniques. In Section \ref{4mom_sec} we prove Theorem \ref{ginibre4mom} in full generality, and in Section \ref{corrunisec} we prove Theorems \ref{corruni} and \ref{corruni2}. The proofs of Theorems \ref{corruni} and \ref{corruni2} reference the proof to Theorem \ref{ginibre4mom} to some degree, otherwise these sections may be read independently of one another.
		
\subsection*{Acknowledgements} The authors thank Gernot Akemann for his comments on an earlier draft of this manuscript.  The authors also thank the anonymous referees for their careful reading of the manuscript and their many detailed comments and suggestions.  
						
		\section{Notations}
		
		In this section, we will collect some elementary notations which we will use throughout the paper.
		
		We will write $f_n=O(g_n)$, $f_n \ll g_n$, $g_n \gg f_n$ or $g_n = \Omega(f_n)$ if $|f_n|\leq C|g_n|$ holds for all $n > C$ and for some fixed $C>0$. If the value of the constant $C$ depends on some parameters, we will denote this with subscripts, e.g., if the constant depends on the parameters $a_1,\ldots,a_k$, we would write $f_n=O_{a_1,\ldots,a_k}(g_n)$.   We will write $f_n=o(g_n)$ to denote $f_n/g_n\to 0$ as $n\to\infty$. We will also make heavy use of the following two basic definitions from probability theory:
		\begin{definition}
			A sequence of events $E_n$ holds with \emph{overwhelming probability} if for every fixed $A>0$ there exists a constant $C_A>0$ such that the estimate $\mathbf{P}(E_n)\geq 1-C_An^{-A}$ holds for all $n$.
		\end{definition}
		\begin{definition}
			A sequence of events $E_n$ holds with \emph{exponential probability} (or \emph{exponentially high probability}) if there exist constants $C,c>0$ such that the estimate $\mathbf{P}(E_n)\geq 1-C\exp(-cn)$ holds for all $n$.
		\end{definition}
		
		We will also make extensive use of the $1-, 2-$ and $\infty$-norms for vectors. Letting $v=(v_1,\ldots,v_n) \in \mathbb{C}^n$:
		\begin{align}
			&\|v\|_{1}=|v_1|+|v_2|+\cdots+|v_n|,\\
			&|v| = \|v\|_{2}=\sqrt{|v_1|^2+\cdots+|v_n|^2},\\
			&\|v\|_{\infty}=\max_j |v_j|.
		\end{align}
		For a matrix $M$, we define the operator norm $\|M\|_{op}$ as follows:
		\begin{align}
			\|M\|_{op}=\sup_{\|v\|_{2}=1}\|Mv\|_{2}.
		\end{align}
		For complex functions $f$ defined on the measure space $(X, \Sigma, \mu)$, we define the usual $1-$, $2-$ and $\infty-$ norms:
		\begin{align}
		&\|f\|_{1}=\int_X |f(x)|d\mu,\\
		&\|f\|_{2}= \sqrt{\int_X |f(x)|^2 d\mu},\\
		&\|f\|_{\infty}=\mbox{ess }\mbox{sup}|f(x)|.
		\end{align}
		In the last definition, we use the essential supremum of $|f(x)|$ to denote the smallest constant which bounds $|f(x)|$ almost everywhere with respect to the appropriate measure. On the linear algebra side of things, we will let $V^\perp$ denote the vector space of all vectors orthogonal to $V$, and will frequently use $X-z$ to denote the matrix $X-zI$, where $X$ is some matrix, $I$ is the identity matrix of matching dimension, and $z\in\mathbb{C}$.
		For a $p \times n$ matrix $A$ with $p \leq n$, we let $\sigma_1(A) \leq \cdots \leq \sigma_p(A)$ denote the ordered singular values of $A$.  In particular, 
		\[ \sigma_1(A) = \inf_{\|v\|_2 = 1} \| A v \|_2. \]
		
		We let $[m]$ denote the discrete interval $\{1, \ldots, m\}$.  For a finite set $S$, $|S|$ is the cardinality of $S$.  We often use $\sqrt{-1}$ for the imaginary unit and reserve $i$ as an index.  
						
		\section{Smallest Singular Value}
		\label{singval_sec}
			
		\subsection{Preliminary Definitions and Lemmas}
		
		In this section, we provide the proof of Theorem \ref{linsmallsig} and Corollary \ref{smallsigvalcor}. We begin by collecting some elementary definitions and lemmas which we will require, and also make some preliminary reductions. Let $X^{(i)}$ for $1\leq i\leq M$ and $Y(z)$ be as in the statement of Theorem \ref{linsmallsig}, and let the small positive parameter $A>0$ be arbitrary.  In several parts of the proof, we assume $M > 1$ as Theorem \ref{linsmallsig} and Corollary \ref{smallsigvalcor} both follow from \cite[Theorem 3.2]{TV3} in the case $M=1$.

		Our argument will rely on the following estimate on the singular values of the factor matrices $X^{(i)}$, which follows from the proof of Lemma 4.11 in \cite{BD}.
		
		\begin{lemma}
			\label{smallsingval}
			There exists a constant $\gamma\in(0,1)$ and a constant $c_0>0$ such that with overwhelming probability, the following estimate hold simultaneously for the singular values $\sigma_1(X^{(i)})\leq\cdots\leq \sigma_n(X^{(i)})$ of each factor matrix $X^{(i)}$ and for all choices of $0<\tau<\gamma$ (such that $n^{1-\tau}$ is an integer):
			\begin{align}
				\label{smallsingvalest}
				\sigma_{n^{1-\tau}}\left(X^{(i)}\right)\geq c_0 n^{1/2-\tau}.
			\end{align}
			The constants $\gamma$ and $c_0$ depend only on the atom distributions of the matrices in question.
		\end{lemma}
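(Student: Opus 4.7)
The plan is to reduce the lemma to a single-scale lower bound on individual small singular values and then union bound over $\tau$ and $i$. Specifically, I would first establish, for each fixed factor matrix $X^{(i)}$ and each integer $k$ with $n^{1-\gamma} \leq k \leq n$, an estimate of the form
\[ \sigma_k(X^{(i)}) \geq c_0 k/\sqrt{n} \]
with probability at least $1 - C_A n^{-A}$, for any prescribed $A > 0$. Substituting $k = n^{1-\tau}$ immediately gives $\sigma_{n^{1-\tau}}(X^{(i)}) \geq c_0 n^{1/2 - \tau}$, which is the target bound. The simultaneous statement over all admissible $\tau$ (at most $n$ values, since $n^{1-\tau}$ must be an integer in $(n^{1-\gamma}, n)$) and over the $M$ factor matrices then follows from a union bound, which loses only a factor of $Mn$ that is absorbed into a larger choice of $A$; overwhelming probability is preserved because $A$ is arbitrary.

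For the single-scale estimate, I would carry out the standard distance-to-hyperplane argument for iid random matrices with subgaussian entries. Letting $R_1, \ldots, R_n$ denote the rows of $X^{(i)}$ and $H_j$ the hyperplane spanned by $\{R_\ell\}_{\ell \neq j}$, the negative second moment identity
\[ \sum_{j=1}^n \sigma_j^{-2}(X^{(i)}) = \sum_{j=1}^n \mathrm{dist}(R_j, H_j)^{-2} \]
reduces the problem to small-ball control of $\mathrm{dist}(R_j, H_j)$. Since this distance equals $|\langle R_j, v_j\rangle|$ for $v_j$ a unit normal to $H_j$, anti-concentration for $\langle R_j, v_j \rangle$ with exponentially high probability follows from the Rudelson--Vershynin / Tao--Vu compressible--incompressible decomposition of $v_j$. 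Combining a uniform lower bound $\mathrm{dist}(R_j, H_j) \gtrsim 1$ on at least $n - k$ indices with the identity above yields the required bound on $\sigma_k$.

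Since the authors explicitly attribute the estimate to the proof of Lemma 4.11 in \cite{BD}, in practice my strategy would be to cite that result directly after verifying that our atom distributions (subgaussian, zero-mean, unit-variance, as in Definition \ref{def:iidmatrix}) satisfy its hypotheses, and to read off the resulting constants $\gamma$ and $c_0$, both of which depend only on the atom distribution. The main obstacle, if one were forced to reprove the bound from scratch, would be the anti-concentration step for $\langle R_j, v_j \rangle$ in the regime where $v_j$ is nearly compressible or sparse; this is precisely the technically delicate piece handled by the structural theorems of Rudelson and Vershynin, and it is the reason why a naive application of Paley--Zygmund or a fixed-vector small-ball estimate is not enough. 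Given that this machinery is already in place in \cite{BD}, no new difficulty arises beyond invoking the existing result and performing the elementary union bound described above.
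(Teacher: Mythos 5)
Your ultimate strategy---invoke Lemma~4.11 of \cite{BD} and union bound over the (at most $Mn$) choices of $i$ and $k=n^{1-\tau}$---is exactly what the paper does; the paper gives no proof and simply cites that reference. The reduction to a single-scale bound $\sigma_k(X^{(i)}) \geq c_0 k/\sqrt{n}$ and the elementary union bound are fine.

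However, your sketch of what the cited proof actually does contains a real gap, and the tool you emphasize is the wrong one. You claim that a uniform lower bound $\mathrm{dist}(R_j,H_j)\gtrsim 1$ on $n-k$ indices, combined with the negative second moment identity, gives $\sigma_k\gtrsim k/\sqrt{n}$. It does not: if every distance is $\gtrsim 1$, the identity gives $\sum_i\sigma_i^{-2}\lesssim n$, hence $k\sigma_k^{-2}\lesssim n$ and only $\sigma_k\gtrsim\sqrt{k/n}$. With $k=n^{1-\tau}$ this is $n^{-\tau/2}$, which is short of the required $n^{1/2-\tau}$ by the polynomial factor $n^{1/2-\tau/2}$, so the conclusion of the lemma does not follow. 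The correct argument (as in \cite{BD} and in Tao--Vu's circular-law paper) deletes roughly $k$ rows and applies the negative second moment identity to the resulting $(n-k)\times n$ submatrix $A'$: there each surviving row sits at distance from the span of the others that is comparable to $\sqrt{k}$, because that span now has codimension of order $k$. This gives $\sum_i\sigma_i(A')^{-2}\lesssim n/k$, hence $\sigma_k(A')\gtrsim k/\sqrt{n}$, and Cauchy interlacing transfers this to $\sigma_k(X^{(i)})$. Crucially, the distance lower bound here is a \emph{concentration} statement (Talagrand's inequality, or Hanson--Wright, for the norm of the projection of a random row onto a fixed high-codimension subspace), not an \emph{anti-concentration} statement. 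The Rudelson--Vershynin compressible/incompressible machinery you highlight as the delicate step is what one needs to control the single smallest singular value $\sigma_1$ (where the normal $v_j$ is a single unit vector of unknown structure); it plays no role for intermediate singular values $\sigma_k$ with $k$ of polynomial size, which is the regime of this lemma.
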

		
		We will also need the small ball probability bound, which appears as Corollary 6.3 in \cite{TV3}:
		
		\begin{lemma}
			\label{smallballprob}
			Let $\xi_1$,\ldots,$\xi_n$ be i.i.d. random variables with mean zero and variance one. Then there exists a constant $c>0$ such that
			\begin{align*}
				\mathbf{P}\left\{|\xi_1v_1+\cdots+\xi_nv_n-z|<c \right\}\leq 1-c
			\end{align*}
			for all $z\in\mathbb{C}$ and all unit vectors $v=(v_1, \ldots, v_n)\in\mathbb{C}^n$.
		\end{lemma}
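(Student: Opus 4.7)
My plan is to bound the L\'evy concentration function $Q(S, c) := \sup_{z \in \mathbb{C}} \mathbf{P}\{|S - z| < c\}$ of the weighted sum $S := \sum_{i=1}^n \xi_i v_i$ by $1-c$, uniformly in the unit vector $v$, the shift $z$, and the dimension $n$. The starting observation is the identity $\mathbf{E}|S - z|^2 = 1 + |z|^2$, which follows because $v$ is a unit vector and each $\xi_i$ is centered with unit variance. For large $|z|$ (say $|z| \geq 10$), Chebyshev's inequality immediately gives
\[ \mathbf{P}\{|S - z| < c\} \leq \mathbf{P}\{|S| \geq |z|-c\} \leq (|z|-c)^{-2}, \]
which is easily below $1-c$, so the substantive case is $|z|$ bounded.

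For the bounded case I would split based on the structure of $v$. If some coordinate $|v_{i_0}| \geq \eta$ for a fixed $\eta > 0$, I would write $S = \xi_{i_0} v_{i_0} + R$ with $R$ independent of $\xi_{i_0}$; convolution with an independent variable cannot increase the concentration function, so $Q(S, r) \leq Q(\xi_{i_0}, r/|v_{i_0}|)$. The non-degeneracy of $\xi_{i_0}$ (an automatic consequence of unit variance) supplies a constant $c_0 > 0$, depending only on the law of $\xi_1$, with $Q(\xi_1, c_0) \leq 1 - c_0$, and taking $c := c_0 \eta \wedge c_0$ completes this sub-case. If instead every $|v_i| < \eta$, I would use the smoothing effect of summing many independent pieces: truncate each $\xi_i$ at a level $K$ to produce a mean-zero bounded $\tilde\xi_i$ with $\mathrm{Var}(\tilde\xi_i) \geq 1/2$ for $K$ large, bound $\mathbf{E}|\tilde S|^4 \leq C_K$ via Marcinkiewicz-Zygmund uniformly in $n$ and $v$, and apply Paley-Zygmund together with $\mathbf{E}|\tilde S - z|^2 \geq 1/2$ to conclude $\mathbf{P}\{|\tilde S - z|^2 \geq c_1\} \geq c_2$ for positive constants $c_1,c_2$ independent of $n$, $v$, and $z$. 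A slicker route is to appeal directly to the Kolmogorov-Rogozin inequality, which bounds the concentration function of a sum of independent random variables in terms of the individual concentration functions and improves with the number of non-negligible summands.

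The main obstacle I anticipate is keeping the bounds genuinely uniform in $n$ during the truncation step. Naively, $\mathbf{P}(S \neq \tilde S) \leq n\,\mathbf{P}(|\xi_1|>K)$, which blows up with $n$ unless $K$ is allowed to grow (and then the fourth-moment bound $C_K$ degrades). The case split above is designed precisely to sidestep this difficulty: when some $|v_{i_0}|$ is not too small, no truncation is needed at all and the argument reduces to the non-degeneracy of a single coordinate; when every $|v_i|$ is small, the relevant truncation error can be controlled by the weighted quantity $\sum_i |v_i|^2 \mathbf{P}(|\xi_i| > K)$ (which is bounded independently of $n$ since $\sum_i |v_i|^2 = 1$) rather than by the crude union bound, so $K$ can be chosen once and for all depending only on the law of $\xi_1$.
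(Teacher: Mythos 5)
The paper does not prove this lemma; it simply cites Corollary 6.3 of \cite{TV3}, so there is no in-paper proof to compare against. Your argument is a valid and essentially self-contained alternative, and the overall plan (reduce to bounded $z$ via Chebyshev, then establish anti-concentration of the weighted sum uniformly over $v$ and $n$) is sound. It takes a moment-based route — truncation, second moment lower bound, fourth moment upper bound, Paley--Zygmund — which is more elementary than the Fourier-analytic (Esseen/Hal\'asz-type) machinery in the cited source, at the cost of producing a worse constant; both are legitimate.

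Two comments on the details. First, the case split on whether some $|v_{i_0}| \geq \eta$ is unnecessary: your ``Case b'' argument already works uniformly. The inputs you need are $\mathbf{E}|\tilde S - z|^2 \geq \mathrm{Var}(\tilde\xi) \geq 1/2$ and $\mathbf{E}|\tilde S - z|^4 \leq C(K, |z|)$, and both hold for every unit vector $v$ regardless of whether some coordinate is large, since $\|v\|_4 \leq \|v\|_2 = 1$. So the conditioning/convolution-monotonicity sub-case, while correct, is redundant. Second, the truncation error you should control is the $L^2$ quantity $\mathbf{E}|S - \tilde S|^2 = \sum_i |v_i|^2 \, \mathrm{Var}\big(\xi_i \mathbf{1}_{|\xi_i|>K}\big) \leq \sum_i |v_i|^2\, \mathbf{E}\big[|\xi_i|^2 \mathbf{1}_{|\xi_i|>K}\big]$, not $\sum_i |v_i|^2 \mathbf{P}(|\xi_i|>K)$ as written; the former is what lets you conclude via Chebyshev that $|S - \tilde S|$ is small with probability close to $1$, uniformly in $n$. (Writing $\tilde\xi_i = \xi_i \mathbf{1}_{|\xi_i| \leq K} - \mathbf{E}[\xi_i \mathbf{1}_{|\xi_i|\leq K}]$ makes $S - \tilde S$ exactly the sum of the centered tails, so this identity is clean.) Finally, your ``slicker route'' remark about Kolmogorov--Rogozin is misleading in the very regime where you invoke it: that inequality gains from individual summands that each have nontrivial anti-concentration at the relevant scale, and $\xi_i v_i$ with tiny $|v_i|$ has none, so it does not simplify the small-coordinate case.
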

		
		It is technically possible to operate in greater generality here: the assumption of identical distribution can be weakened to the more general technical condition that the random variables in question be $\kappa$-controlled, but we will not seek such refinements here. For more information on $\kappa$-controlled distributions, the reader is referred to \cite{TV3}. 
		
		We will also need the classical Chernoff bound for indicator variables; see for example \cite{BLM}.  
		\begin{lemma} \label{lemma:chernoff}
			Let $\mathcal{X}_1$,\ldots,$\mathcal{X}_n$ be independent indicator variables, and let $\mu=\mathbf{E}\left[\sum_{i=1}^n \mathcal{X}_i\right]$ denote the expectation of the sum. Then for any $0<\delta\leq 1$:
			\begin{align}
				\mathbf{P}\left\{\sum_{i=1}^n \mathcal{X}_i \geq (1+\delta)\mu \right\}
				\leq
				e^{-\mu\delta^2/3}
			\end{align}
			and:
			\begin{align}
				\mathbf{P}\left\{\sum_{i=1}^n \mathcal{X}_i \leq (1-\delta)\mu \right\}
				\leq
				e^{-\mu\delta^2/2}.
			\end{align}
		\end{lemma}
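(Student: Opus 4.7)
The plan is to use the standard exponential moment (Chernoff) method. For the upper tail, I would start from Markov's inequality applied to $e^{\lambda S_n}$, where $S_n = \sum_{i=1}^n \mathcal{X}_i$ and $\lambda > 0$ is a free parameter to be optimized:
\[ \mathbf{P}\{ S_n \geq (1+\delta) \mu \} \leq e^{-\lambda(1+\delta)\mu}\, \mathbf{E}\left[ e^{\lambda S_n} \right]. \]
By independence, $\mathbf{E}[e^{\lambda S_n}] = \prod_{i=1}^n \mathbf{E}[e^{\lambda \mathcal{X}_i}]$, and since each $\mathcal{X}_i$ is an indicator with $p_i := \mathbf{E}[\mathcal{X}_i]$,
\[ \mathbf{E}\left[ e^{\lambda \mathcal{X}_i} \right] = 1 + p_i(e^\lambda - 1) \leq \exp\bigl( p_i(e^\lambda - 1) \bigr), \]
using $1 + x \leq e^x$. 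Multiplying gives $\mathbf{E}[e^{\lambda S_n}] \leq \exp(\mu(e^\lambda - 1))$.

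Substituting back yields, for every $\lambda > 0$,
\[ \mathbf{P}\{ S_n \geq (1+\delta)\mu \} \leq \exp\bigl( \mu(e^\lambda - 1) - \lambda(1+\delta)\mu \bigr). \]
The optimal choice is $\lambda = \log(1+\delta)$, producing the bound $\exp\bigl( \mu [\delta - (1+\delta)\log(1+\delta)]\bigr)$. The last step is the elementary numerical inequality
\[ (1+\delta)\log(1+\delta) - \delta \geq \frac{\delta^2}{3}, \qquad 0 < \delta \leq 1, \]
which one verifies by expanding both sides in Taylor series at $\delta = 0$ (or by checking that the difference has nonnegative derivative on $[0,1]$). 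This gives the first displayed inequality.

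For the lower tail, I would run the same argument with $-\mathcal{X}_i$ in place of $\mathcal{X}_i$: for $\lambda > 0$,
\[ \mathbf{P}\{ S_n \leq (1-\delta)\mu \} \leq e^{\lambda(1-\delta)\mu} \prod_{i=1}^n \mathbf{E}\left[ e^{-\lambda \mathcal{X}_i}\right] \leq \exp\bigl( \mu(e^{-\lambda} - 1) + \lambda(1-\delta)\mu \bigr). \]
Optimizing at $\lambda = -\log(1-\delta)$ and using the sharper inequality $(1-\delta)\log(1-\delta) + \delta \geq \delta^2/2$ for $0 < \delta \leq 1$ (again Taylor expansion, noting the absence of the $1/3$ loss because the series is alternating with larger coefficients) gives the second bound. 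There is no real obstacle here; the only piece requiring care is the elementary calculus inequalities $(1\pm\delta)\log(1\pm\delta) \mp \delta \geq \delta^2/(2\text{ or }3)$, which are routine and responsible for the asymmetry between the two tails.
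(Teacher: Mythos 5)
The paper does not prove this lemma at all; it simply cites \cite{BLM} for the classical Chernoff bound, so there is no internal argument to compare against. Your exponential-moment proof is correct and is the standard one: the bound $\mathbf{E}[e^{\lambda \mathcal{X}_i}] = 1 + p_i(e^\lambda - 1) \leq e^{p_i(e^\lambda-1)}$, the optimizing choices $\lambda = \log(1+\delta)$ and $\lambda = -\log(1-\delta)$, and the two elementary inequalities $(1+\delta)\log(1+\delta)-\delta \geq \delta^2/3$ and $(1-\delta)\log(1-\delta)+\delta \geq \delta^2/2$ for $0<\delta\leq 1$ all check out (for the first, note $g'(\delta)=\log(1+\delta)-2\delta/3$ satisfies $g'(0)=0$, is concave with a maximum at $\delta=1/2$, and $g'(1)=\log 2 - 2/3 > 0$, so $g'\geq 0$ on $[0,1]$).
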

		
		We will frequently use the Chernoff bound in conjunction with a well known cardinality bound on epsilon nets. A set of vectors $\mathcal{N}_\epsilon\subseteq\Omega\subseteq\mathbb{C}^n$ is said to constitute an $\epsilon$-net for $\Omega$ if for any vector $v\in\Omega$ there exists $u\in\mathcal{N}_{\epsilon}$ such that $\|u-v\|_{2}\leq \epsilon$.
		
		\begin{lemma}
			\label{epsilonnet}
			For any $\epsilon > 0$, the unit sphere of $\mathbb{C}^{n}$ contains an $\epsilon$-net $\mathcal{N}_\epsilon$ whose cardinality satisfies:
			\begin{align*}
				|\mathcal{N}_\epsilon| \leq \left(1+\frac{2}{\epsilon} \right)^{2n}.
			\end{align*}
		\end{lemma}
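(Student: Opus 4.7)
The plan is to use the standard volumetric (packing) argument, after first identifying $\mathbb{C}^n$ with $\mathbb{R}^{2n}$ so that the unit sphere becomes the Euclidean unit sphere $S^{2n-1}$. The exponent $2n$ in the claimed bound is precisely the real dimension, which makes this identification the natural starting point.

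First, I would choose $\mathcal{N}_\epsilon$ to be any maximal $\epsilon$-separated subset of the unit sphere, where ``$\epsilon$-separated'' means that any two distinct points $u, v \in \mathcal{N}_\epsilon$ satisfy $\|u - v\|_2 > \epsilon$. Such a maximal set exists by a routine Zorn/greedy argument. The key observation is then that $\mathcal{N}_\epsilon$ automatically forms an $\epsilon$-net: if some $v$ on the sphere were at distance greater than $\epsilon$ from every element of $\mathcal{N}_\epsilon$, we could adjoin $v$ to $\mathcal{N}_\epsilon$ and still have an $\epsilon$-separated set, contradicting maximality.

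Next, I would bound $|\mathcal{N}_\epsilon|$ by comparing volumes in $\mathbb{R}^{2n}$. Because the points of $\mathcal{N}_\epsilon$ are pairwise at distance greater than $\epsilon$, the open balls of radius $\epsilon/2$ centered at these points are pairwise disjoint. Moreover, since each center lies on the unit sphere, each such ball is contained in the larger ball of radius $1 + \epsilon/2$ centered at the origin. Writing $V_{2n}$ for the volume of the Euclidean unit ball in $\mathbb{R}^{2n}$, the disjointness and containment yield
\begin{equation*}
|\mathcal{N}_\epsilon| \cdot \left(\tfrac{\epsilon}{2}\right)^{2n} V_{2n} \;\leq\; \left(1 + \tfrac{\epsilon}{2}\right)^{2n} V_{2n},
\end{equation*}
from which the desired estimate $|\mathcal{N}_\epsilon| \leq (1 + 2/\epsilon)^{2n}$ follows upon dividing through.

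There is essentially no obstacle here: this is the textbook covering-number bound for the Euclidean sphere, and the only care required is to keep track of the real dimension $2n$ rather than the complex dimension $n$, which is exactly what produces the exponent appearing in the statement. The lemma is stated for later use in combination with the Chernoff bound (Lemma \ref{lemma:chernoff}) to control suprema over the unit sphere via union bounds, so no sharper constants are needed.
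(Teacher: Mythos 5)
Your proof is correct and is exactly the standard volumetric (maximal $\epsilon$-separated set plus packing) argument. The paper does not prove this lemma itself but simply cites Vershynin's book for it, and the argument you give is the one found there; the only point requiring care is the identification of $\mathbb{C}^n$ with $\mathbb{R}^{2n}$ to get the exponent $2n$, which you handle explicitly.
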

		
		The reader is referred to the excellent resource \cite{V} for a proof. We will also require the following estimate on the sizes of nets:
		
		\begin{lemma}
			\label{cordbound}
			For $a,b \in (0,1)$, the set of unit vectors in $\mathbb{C}^n$ with at most $an$ non-zero coordinates admits a $b$-net with cardinality at most:
			\begin{align*}
				{ n \choose an}\left(\frac{C}{b}\right)^{2an}
				\leq \left(\frac{e}{a}\frac{C}{b}\right)^{2an}.
			\end{align*}
			Here, $C>0$ is an absolute constant.
		\end{lemma}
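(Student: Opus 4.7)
The plan is to build the net by first selecting the support of the non-zero coordinates and then, on the resulting finite-dimensional coordinate subspace, invoking Lemma \ref{epsilonnet} on its unit sphere. This two-step construction cleanly separates the combinatorial cost of choosing the support (a factor of $\binom{n}{an}$) from the metric cost of approximating the unit sphere of a complex space of dimension $an$ (a factor of $(1+2/b)^{2an}$).

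Concretely, for every subset $S \subseteq [n]$ with $|S| = an$, let $\mathbb{C}^S \subseteq \mathbb{C}^n$ denote the coordinate subspace of vectors supported on $S$. Applying Lemma \ref{epsilonnet} in complex dimension $an$ produces a $b$-net $\mathcal{N}_S$ for the unit sphere of $\mathbb{C}^S$ with $|\mathcal{N}_S|\leq (1+2/b)^{2an}$. Any unit vector in $\mathbb{C}^n$ with at most $an$ non-zero coordinates is supported on some such $S$ (enlarging its support to exactly $an$ entries if necessary), so the union $\mathcal{N}:=\bigcup_{|S|=an}\mathcal{N}_S$ is a $b$-net for the entire set of interest. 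Its cardinality obeys
\[ |\mathcal{N}|\leq \binom{n}{an}(1+2/b)^{2an}\leq \binom{n}{an}\left(\frac{C}{b}\right)^{2an} \]
for an absolute constant $C$ (absorbing the harmless $1$ since $b<1$), giving the first claimed bound.

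For the second inequality, apply the standard estimate $\binom{n}{k}\leq (en/k)^{k}$ with $k=an$ to get $\binom{n}{an}\leq (e/a)^{an}$. Since $e/a\geq 1$, one has $(e/a)^{an}\leq (e/a)^{2an}$; combining this with the factor $(C/b)^{2an}$ yields
\[ |\mathcal{N}|\leq \left(\frac{e}{a}\cdot\frac{C}{b}\right)^{2an}, \]
which matches the stated form of the bound. There is no genuine obstacle: the only mild care required is the rewriting $(e/a)^{an}\leq (e/a)^{2an}$ and the absorption of the $1$ in $(1+2/b)$ into the constant $C$, both of which are harmless since $a,b\in(0,1)$.
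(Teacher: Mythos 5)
Your proof is correct and is essentially the argument the paper has in mind: it cites Section 8 of \cite{TV3}, which uses exactly this decomposition (choose the $an$-element support, then cover the unit sphere of each $an$-dimensional coordinate subspace via Lemma \ref{epsilonnet}), together with the standard bound $\binom{n}{k}\le (en/k)^k$. Your handling of the constant absorption and the harmless inequality $(e/a)^{an}\le (e/a)^{2an}$ is also fine.
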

		
		This follows from Lemma \ref{epsilonnet} along with the well-known bound on binomial coefficients ${ n \choose k}\leq (ne/k)^k$ (valid for all $k$ less than $n$). 
		
		During the course of the argument, we will find it necessary to work with so-called \textit{compressible} and \textit{incompressible} vectors separately. Intuitively, this distinction captures whether or not a vector is well-approximated by a sparse vector. Precisely, we define a compressible vector, with parameters $a, b\in(0,1)$, to be a vector $v^\prime$ such that there exists a vector $v$ supported on at most $an$ coordinates with $\|v-v^\prime\|_{2}\leq b$. Let $\mbox{Comp}(a,b)$ denote the set of compressible unit vectors with parameters $a$ and $b$, and let $\mbox{Incomp}(a,b)$, the set of incompressible unit vectors, denote its complement on the unit sphere.
		
		Without loss of generality, we now place some assumptions on the matrices we will be working with, ruling out some very unlikely pathologies.  By the bounds on the operator norm of iid random matrices with subgaussian entries \cite{V} we will often work on the event
		\begin{align}
			\label{genopbound}
			\max\left\{\|X^{(1)}\|_{op},\ldots,\|X^{(M)}\|_{op} \right\}
			\leq \sqrt{n} \left(1+r_0\right), 
		\end{align}
which holds with exponential probability for $r_0$ sufficiently large.  
		
		Let $\tilde{X}^{(i)}$ be the $n-1$ by $n$ rectangular matrix formed by deleting the last row of $X^{(i)}$, and let $\tilde{Y}(z)$ be the $Mn-1$ by $Mn$ rectangular matrix formed by deleting the last row of $Y(z)$. By the upper bound on the operator norm of $X^{(i)}$, for sufficiently large and fixed $r_0>0$, the following holds:
		\begin{align}
			\label{upboundnorm}
			\max_i\left\{\|X^{(i)}\|_{op},
			\|\tilde{X}^{(i)}\|_{op}
			\right\}\leq \sqrt{n}(1+r_0)
		\end{align}
		with exponential probability.  
		
		\subsection{The Compressible Case}
		Our starting observation is that it is quite unlikely for any fixed selection of unit vector to be orthogonal to the row space of $Y(z)$. 
		
		\begin{lemma}
			\label{singvect}
			The following estimate holds for all $z\in\mathbb{C}$ and any fixed unit vector u:
			\begin{align*}
				\mathbf{P}\left\{
				\|Y(z)u\|_2
				\leq c_1\sqrt{n}
				\right\}
				\leq O\left(\exp(-c_0n)\right).
			\end{align*}
			The implied constants featured in this bound, as well as $c_0>0$ and $c_1>0$, can be taken to depend only on $M$ and the atom distributions of the factor matrices.
		\end{lemma}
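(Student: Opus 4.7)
The plan is to exploit the block structure of $Y(z)$ together with the mutual independence of rows within each factor matrix $X^{(i)}$. Decompose the fixed unit vector $u$ as $u = (u_1, \ldots, u_M) \in (\mathbb{C}^n)^M$ so that $\sum_{i=1}^M \|u_i\|_2^2 = 1$. By the pigeonhole principle there exists an index $j_0 \in [M]$ for which $\|u_{j_0}\|_2 \geq 1/\sqrt{M}$; with cyclic indexing (i.e.\ $u_{M+1} := u_1$), set $i_0 \in [M]$ so that $u_{j_0} = u_{i_0+1}$. Since $u$ is deterministic, $i_0$ is deterministic too and no union bound will be needed.

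The $i_0$-th block of $Y(z) u$ equals $X^{(i_0)} u_{i_0+1} - z u_{i_0}$, a vector in $\mathbb{C}^n$ whose $j$-th scalar component is
\[ \sum_{k=1}^n \xi^{(i_0)}_{j,k} (u_{i_0+1})_k \;-\; z (u_{i_0})_j. \]
Since the rows of $X^{(i_0)}$ are mutually independent, these $n$ components are mutually independent random variables. Normalizing by $\|u_{i_0+1}\|_2$ and applying the small-ball bound (Lemma \ref{smallballprob}) to the unit vector $u_{i_0+1}/\|u_{i_0+1}\|_2$ with the complex shift $z (u_{i_0})_j / \|u_{i_0+1}\|_2$ produces a universal constant $c > 0$ such that the $j$-th component has magnitude at least $c\|u_{i_0+1}\|_2 \geq c/\sqrt{M}$ with probability at least $c$.

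Let $\mathcal{X}_j$ denote the indicator of this large-magnitude event. The $\mathcal{X}_j$ are mutually independent Bernoulli variables with $\mathbf{E}[\mathcal{X}_j] \geq c$, hence $\mu := \mathbf{E}[\sum_{j=1}^n \mathcal{X}_j] \geq cn$. Applying the Chernoff bound of Lemma \ref{lemma:chernoff} with $\delta = 1/2$, one obtains that except on an event of probability at most $\exp(-cn/8)$, at least $cn/2$ of the $n$ components carry magnitude at least $c/\sqrt{M}$, and on this event
\[ \|Y(z) u\|_2^2 \;\geq\; \|X^{(i_0)} u_{i_0+1} - z u_{i_0}\|_2^2 \;\geq\; \frac{cn}{2} \cdot \frac{c^2}{M} \;=\; \frac{c^3 n}{2M}. \]
Choosing $c_0$ and $c_1$ accordingly completes the argument. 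I do not anticipate a substantive obstacle here: pigeonhole plus small-ball plus Chernoff is a standard template, and the only feature specific to the block model $Y(z)$—the cyclic index shift relating $X^{(i_0)}$ to the norm-$\geq 1/\sqrt{M}$ subvector $u_{i_0+1}$—is handled transparently once $u$ is split into its $M$ blocks. The real work of the section will begin in the subsequent incompressible/compressible analysis, where $u$ is no longer fixed and one must control $\|Y(z) u\|_2$ uniformly over huge classes of $u$, against the handicap that the deterministic zero-blocks of $Y(z)$ could in principle be targeted by normals to the row-space.
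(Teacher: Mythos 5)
Your proposal is correct and follows essentially the same route as the paper's proof: decompose $u$ into its $M$ blocks, use pigeonhole to locate a block $u_{i_0+1}$ with $\ell^2$-norm at least $M^{-1/2}$, apply the small-ball bound (Lemma \ref{smallballprob}) coordinatewise to the corresponding block $X^{(i_0)} u_{i_0+1} - z u_{i_0}$ of $Y(z)u$ (normalizing $u_{i_0+1}$ and absorbing $z u_{i_0}$ as a deterministic shift), and finish with the Chernoff bound of Lemma \ref{lemma:chernoff}. The paper simply fixes the favorable block to be $\mathit{l}=2$ ``without loss of generality'' rather than carrying the cyclic index $i_0$ explicitly, but the argument is the same.
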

		
		\begin{proof}
		Begin by letting $u$ be any fixed unit vector, and write $u=(u^{(1)},u^{(2)},\ldots,u^{(M)})$, where each $u^{(\mathit{l})}\in\mathbb{C}^n$. The estimate $\|u^{(\mathit{l})}\|_{2}\geq M^{-1/2}$ must hold for at least one $\mathit{l}$: without loss of generality, we will take this estimate to hold for $\mathit{l}=2$. Let us then expand (for $1\leq k\leq n$):
		\begin{align*}
			\left|\left(Y(z)u\right)_{k}\right|
			=\left|\left(\sum_{j=1}^n X^{(1)}_{k,j}u^{(2)}_j\right)
			-zu^{(1)}_k\right|.
		\end{align*}
		Define $T_k$ to be the indicator variable associated with the event that the following estimate holds:
		\begin{align}
			\left|\left(\sum_{j=1}^n X^{(1)}_{k,j}u^{(2)}_j\right)
			-zu^{(1)}_k\right|\leq \frac{c}{M^{1/2}}.
		\end{align}
		Since by assumption $u$ is a vector whose first $n$ coordinates have 2-norm at least $M^{-1/2}$, we are able to choose the constant $c>0$ appropriately (in a manner that depends only on the distribution of $X^{(1)}_{j,k}$) such that Lemma \ref{smallballprob}, the small ball probability bound, implies that
		\begin{align}
			\label{probof1forchern}
			\mathbf{P}\left\{T_k=1
			\right\}\leq 1-c.
		\end{align}
		
		We would like to bound the probability of the following rare event:
		\begin{align*}
		\sum_{i=1}^{n} T_{i} \geq 
		n\left(1-\frac{c}{2}\right)
		\end{align*}
		Clearly this probability is maximized when the inequality in (\ref{probof1forchern}) is an equality, so we may as well assume that $T_k=1$ with probability $1-c$. By the Chernoff bound for indicator variables, we have that for any $\delta>0$:
		\begin{align*}
			\mathbf{P}\left\{\sum_{i=1}^{n} T_{i} \geq 
			n(1-c)(1 + \delta)
			\right\}
			\leq \exp(-n(1-c)\delta^2/3).
		\end{align*}
		Taking the parameter $\delta$ to be $c/(2(1-c))$ (we can safely assume $c<1/2$, as we can always choose smaller $c$) and $c_0=c^2/(12(1-c))$:
		\begin{align*}
			\mathbf{P}\left\{\sum_{i=1}^{n} T_{i} \geq 
			n\left(1-\frac{c}{2}\right)
			\right\}
			\leq \exp(-c_0n).
		\end{align*}
		This means that, with exponential probability, for at least $(cn/4)$ coordinates, the following estimate holds:
		\begin{align*} 
			|(Y(z)u)_k|\geq \frac{c}{M^{1/2}}.
		\end{align*}
		Since we can bound the norm of $Y(z)u$ from below by the norm of its first $cn/4$ coordinates:
		\begin{align*}
			\mathbf{P}
			\left\{\|Y(z)u^{(1)}\|^2_{2} \leq \frac{c^3n}{4M }\right\}
			\leq O\left(\exp(-c_0n)\right).
		\end{align*}
		Setting $c_1=\sqrt{ \frac{c^3}{4M } }$ concludes the argument. \end{proof}
		
		As a quick corollary, let us use this lemma to handle the case of compressible vectors. The thrust of the argument is that, since the set of compressible vectors can be well-approximated by a relatively small $\epsilon$-net,  the union bound may be applied and yield nontrivial bounds.
		
		\begin{lemma}
			\label{complemma}
			For every $\epsilon > 0$, there exists $t_0, \theta > 0$ (depending on $\epsilon$, $M$, the constant $r_0$ from \eqref{genopbound} and the atom distribution of the factor matrices $X^{(1)}$,\ldots,$X^{(M)}$) such that the following estimate holds for all $|z| \leq n^{1/2 + t_0}$, for $a=1/\log(n)$ and for $b = \theta n^{-\epsilon}$: 
			\begin{align}
				\mathbf{P}\left\{
				\min_{u\in\mathrm{Comp}(a,b)}\left\|Y(z)u\right\|_2
				\leq c_1\sqrt{n}
				\right\}\leq O\left(\exp(-c_0n)\right). 
			\end{align}
			The implied constants featured in this bound, and also the constants $c_0>0$ and $c_1>0$, can be taken to depend only on $\epsilon$, $M$ and the atom distributions of the matrices $X^{(i)}$.
		\end{lemma}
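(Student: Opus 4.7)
The strategy is a discretization argument based on Lemma \ref{singvect}. First I observe that any unit vector $u \in \mbox{Comp}(a,b)$ can be approximated within distance $2b$ by a \emph{unit} vector that is $an$-sparse: if $v$ is an $an$-sparse vector with $\|u-v\|_2 \leq b$, then $\|v\|_2 \in [1-b,1+b]$, and $\tilde v := v/\|v\|_2$ is unit, $an$-sparse, and satisfies $\|u - \tilde v\|_2 \leq 2b$. Hence, it suffices to build a fine net $\mathcal{N}$ for the unit $an$-sparse vectors in $\mathbb{C}^{Mn}$ and then control $\|Y(z) w\|_2$ simultaneously on $\mathcal{N}$.

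For the parameters $a = 1/\log n$ and net resolution $\delta' := c_{*} n^{-t_0}$ (with $c_{*} > 0$ a small constant and $t_0$ to be chosen), Lemma \ref{cordbound} applied in ambient dimension $Mn$ with sparsity fraction $a/M$ produces such a net with
\begin{equation*}
|\mathcal{N}| \leq \left(\frac{eMC\log n \cdot n^{t_0}}{c_{*}}\right)^{2n/\log n} = \exp\bigl((2t_0 + o(1))\,n\bigr).
\end{equation*}
Let $c_0', c_1' > 0$ denote the constants produced by Lemma \ref{singvect}. That lemma guarantees $\|Y(z)w\|_2 \geq c_1'\sqrt n$ for each fixed $w$ with probability at least $1 - O(\exp(-c_0' n))$, so a union bound over $\mathcal N$ yields $\min_{w \in \mathcal N}\|Y(z) w\|_2 \geq c_1'\sqrt n$ with probability $1 - O(\exp(-(c_0' - 2t_0 + o(1))n))$, which is exponentially close to $1$ provided $t_0 < c_0'/4$. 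Finally, on the event \eqref{upboundnorm} (of exponential probability) we have the Lipschitz bound $\|Y(z)\|_{op} \leq \|Y\|_{op} + |z| \leq \sqrt n(1 + r_0) + n^{1/2 + t_0} = O(n^{1/2 + t_0})$. For any $u \in \mbox{Comp}(a,b)$, selecting $w \in \mathcal N$ with $\|u - w\|_2 \leq 2b + \delta'$ then gives
\begin{equation*}
\|Y(z) u\|_2 \geq \|Y(z) w\|_2 - \|Y(z)\|_{op}(2b + \delta') \geq c_1'\sqrt n - O(n^{1/2 + t_0})\bigl(2\theta n^{-\epsilon} + c_{*} n^{-t_0}\bigr).
\end{equation*}
Imposing $t_0 \leq \epsilon$ and choosing $c_{*}$ and $\theta$ small enough (in terms of $c_1'$, $M$, and $r_0$) makes this correction at most $c_1'\sqrt n / 2$, so the stated conclusion holds with $c_1 := c_1'/2$ and $c_0$ equal to the exponent produced by the union bound.

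The main obstacle is the quantitative balance between the metric entropy of $\mbox{Comp}(a,b)$ at scale $n^{-t_0}$ and the Chernoff exponent in Lemma \ref{singvect}: the net has size $\exp(O(t_0 n))$, which forces $t_0$ to be small relative to $c_0'$. Once that choice is made, the remaining conditions $t_0 \leq \epsilon$ and the smallness of $c_*$ and $\theta$ cause no difficulty for any fixed $\epsilon > 0$; one simply sets $t_0 := \tfrac{1}{2}\min(\epsilon, c_0'/4)$ and then chooses $\theta$ below the corresponding threshold.
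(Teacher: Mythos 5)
Your proof is correct and follows the same strategy as the paper: a union bound over a Lemma \ref{cordbound} net combined with the per-vector bound of Lemma \ref{singvect}, then a triangle-inequality extension under the operator-norm event. The only (cosmetic) difference is that you use a net at a separate resolution $\delta' = c_* n^{-t_0}$ rather than at resolution $b$ itself, which lets you avoid the paper's preliminary reduction to small $\epsilon$.
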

		
		\begin{proof} It suffices to prove the bound for all sufficiently small values of $\epsilon > 0$, as the bound then trivially holds for all larger values of $\epsilon$ (since $\mathrm{Comp}(a,b') \subset \mathrm{Comp}(a,b)$ for all $b' \leq b$).  Let $\Omega_{a,b}$ denote a $b$-net of the set of all vectors with unit length and at most $an$ nonzero coordinates. Using Lemma \ref{cordbound} we can bound the cardinality of this net:
		\begin{align*}
			|\Omega_{a,b} | \leq { n \choose an}\left(\frac{C}{b}\right)^{2an}
			\leq \left(\frac{e}{a}\frac{C}{b}\right)^{2an}
			= \exp\left(2an\log\left(\frac{Ce}{ba}\right)\right).
		\end{align*}
		
		Set $a=1/\log(n)$ and $b=\theta n^{-\epsilon}$ for $\epsilon$ sufficiently small (to be determined momentarily) and $\theta > 0$ to be chosen shortly. Our net can then be chosen to satisfy the cardinality bound:
		\begin{align*}
			\left|\Omega_{a,b}\right|\leq
			\exp\left(\frac{2n}{\log(n)}\log\left({Ce \theta^{-1} n^{\epsilon}\log(n)} \right) \right)
			\leq O\left(\exp\left({c_3n}\right)
			\right),
		\end{align*}
		where $c_3$ can be chosen arbitrarily small by taking $\epsilon$ sufficiently small (and because  $\log(\log(n))/\log(n)$ tends to zero).  
		By the union bound and Lemma \ref{singvect}, if we take $c_3$ to be small enough (and hence $\epsilon$ small enough), we have the following probability estimate:
		\begin{align}
			\label{compunionbd}
			\mathbf{P}
			\left\{\min_{u\in\Omega_{a,b}}\|Y(z)u\|_{2} \leq c_1\sqrt{n}\right\}
			\leq O\left(\exp(-cn)\right). 
		\end{align}
		
		This proves the result over the net, it remains to extend this bound to the rest of the set of compressible vectors.  Recall that a vector $u$ in $\mbox{Comp}(a,b)$ is distance at most $b$ away from a vector with at most $an$ nonzero coordinates, and therefore is at most distance $2b$ away from a vector $u^\prime$ in $\Omega_{a,b}$. Suppose there exists a vector $u \in \mathrm{Comp}(a,b)$ such that $\|Y(z) u\| \leq \frac{c_1}{10} \sqrt{n}$ and that $\|Y(z) \| \leq (2 + r_0) {n}^{1/2 + t_0}$. Then there exists a vector $u' \in \Omega_{a,b}$ such that $\|u - u'\| \leq 2b$.  Hence, by the triangle inequality, taking $\theta  = \frac{c_1}{30 (2 + r_0)}$ and $t_0 = \epsilon$ gives
\[ \|Y(z) u' \| \leq 2b (2 + r_0) {n}^{1/2+t_0} + \frac{c_1}{10} \sqrt{n} \leq c_1 \sqrt{n}. \]
In view of \eqref{compunionbd}, we conclude that
\begin{align*} 
	\mathbf{P}&\left\{
				\min_{u\in\mathrm{Comp}(a,b)}\left\|Y(z)u\right\|_2
				\leq \frac{c_1}{10} \sqrt{n} \text{ and } \|Y(z) \| \leq (2 + r_0) {n}^{1/2 + t_0}
				\right\} \\
				&\qquad \leq \mathbf{P}
			\left\{\min_{u\in\Omega_{a,b}}\|Y(z)u\|_{2} \leq c_1\sqrt{n}\right\} \\
			&\qquad \leq O\left(\exp(-cn)\right). 
\end{align*}
Since the norm bound $\|Y(z)\| \leq (2 + r_0) {n}^{1/2+ t_0}$ holds with exponential probability (due to \eqref{genopbound} and the assumption that $|z| \leq n^{1/2 + t_0}$), the claim follows.  \end{proof}
		
		We will utilize Lemma \ref{complemma} at the end of the section.  Notice that the proof to  Lemma \ref{complemma} goes over if we delete a row from either a factor matrix $X^{(i)}$ or ${Y}(z)$ (simply by replacing $n$ with $n-1$ as appropriate gives the equivalent results in this rectangular case). 
		
		\begin{lemma}
			\label{complemma2}
			For every $\epsilon > 0$, there exists $t_0, \theta > 0$ (depending on $\epsilon$, $M$, the constant $r_0$ from \eqref{genopbound} and the atom distribution of the factor matrices $X^{(1)}$,\ldots,$X^{(M)}$) such that the following estimate holds for all $|z| \leq n^{1/2 + t_0}$, for $a=1/\log(n)$ and for $b = \theta n^{-\epsilon}$: 
			\begin{align*}
				\mathbf{P}\left\{
				\min_{u\in\mathrm{Comp}(a,b)}\left\|\tilde{Y}(z)u\right\|_2
				\leq c_0\sqrt{n}
				\right\}\leq O\left(\exp(-cn)\right).
			\end{align*}
			The constants in this bound depend only on $\epsilon$, $M$, and the atom distributions of the matrices $X^{(i)}$.  
		\end{lemma}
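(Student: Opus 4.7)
The plan is to observe that the entire structure of the argument for Lemma \ref{complemma} is preserved when the last row of $Y(z)$ is removed; the only modification is a careful bookkeeping of how many rows survive in each block. First I would establish the analogue of Lemma \ref{singvect} for $\tilde{Y}(z)$: for any fixed unit vector $u = (u^{(1)}, \ldots, u^{(M)}) \in \mathbb{C}^{Mn}$ with each $u^{(\ell)} \in \mathbb{C}^n$, we still have $\|\tilde{Y}(z) u\|_2 \geq c_1 \sqrt{n}$ with exponential probability, for constants depending only on $M$ and the atom distributions.

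To see this, pick $\ell \in \{1, \ldots, M\}$ with $\|u^{(\ell)}\|_2 \geq M^{-1/2}$, and examine the block row of $\tilde{Y}(z) u$ that couples $X^{(\ell - 1)}$ (with indices taken cyclically mod $M$) to $u^{(\ell)}$. In $Y(z)$ this block has $n$ rows; in $\tilde{Y}(z)$ it has $n$ rows unless $\ell = 1$, in which case it has $n-1$ rows (the $M$-th block is the one from which the last row was removed). In either case, for each surviving coordinate $k$ in this block we have
\begin{align*}
(\tilde{Y}(z) u)_k = \sum_{j=1}^n X^{(\ell-1)}_{k,j} u^{(\ell)}_j - z u^{(\ell - 1)}_k,
\end{align*}
and the small ball bound from Lemma \ref{smallballprob} gives $\mathbf{P}(|(\tilde{Y}(z) u)_k| \leq c/M^{1/2}) \leq 1 - c$ independently across $k$ (since these rows involve disjoint independent entries of $X^{(\ell-1)}$). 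Applying the Chernoff bound from Lemma \ref{lemma:chernoff} to the $n$ or $n-1$ indicators, exactly as in the proof of Lemma \ref{singvect}, shows that a positive fraction of these coordinates exceed $c/M^{1/2}$ in absolute value with exponential probability, yielding the desired $c_1 \sqrt{n}$ lower bound.

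Once the fixed-vector bound is in place, the remainder of the argument is a verbatim repetition of the proof of Lemma \ref{complemma}. I would cover the set of compressible vectors with parameters $a = 1/\log n$ and $b = \theta n^{-\epsilon}$ by the net $\Omega_{a,b}$ from Lemma \ref{cordbound}, whose cardinality is at most $\exp(c_3 n/2)$ for $c_3$ arbitrarily small when $\epsilon$ is taken sufficiently small. A union bound against the fixed-vector estimate over this net gives exponential control on the minimum of $\|\tilde{Y}(z) u\|_2$ over $u \in \Omega_{a,b}$. To pass from the net to all of $\mathrm{Comp}(a,b)$, I would invoke the operator norm bound $\|\tilde{Y}(z)\|_{op} \leq \|Y(z)\|_{op} \leq (2 + r_0) n^{1/2 + t_0}$ (which holds with exponential probability when $|z| \leq n^{1/2+t_0}$ by \eqref{genopbound}), and then choose $\theta$ small enough that the triangle-inequality step $\|\tilde{Y}(z) u\|_2 \geq \|\tilde{Y}(z) u'\|_2 - 3b \|\tilde{Y}(z)\|_{op}$ absorbs the discretization error. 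There is no essential obstacle here, since no new phenomenon arises from deleting one row out of $Mn$; the only point requiring mild care is ensuring that the block chosen in the fixed-vector step has enough surviving rows, which is guaranteed because at most one row is missing from at most one block.
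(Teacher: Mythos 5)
Your proof is correct and takes essentially the same approach as the paper, which dispatches this lemma in one sentence by observing that the proof of Lemma \ref{complemma} carries over after replacing $n$ by $n-1$ where appropriate. You have filled in the only detail requiring care: that the block used in the Chernoff argument may have $n-1$ rather than $n$ rows (when the large component of $u$ is $u^{(1)}$ and the relevant block is block $M$), which does not affect the exponential estimate.
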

		
		This extension will be useful for some of the arguments we will make later on, as a quick way of ruling out certain problematic scenarios without repeating all the details of this section.  
		
		\subsection{The Incompressible Case, Part I}
		
		We need to collect some preliminary lemmas in preparation for dealing with incompressible vectors. 
		
		First, an overview. Recall that $\tilde{Y}(z)$ denotes the rectangular matrix produced by deleting the last row of $Y(z)$. In this section, we will endeavor to show that, with high probability, any unit vector $v$ which is orthogonal to the rows of $\tilde{Y}(z)$ has the property that its first $n$ coordinates are not too close, in a certain sense, to the singular vectors corresponding to the small singular values of the product of the first $M-1$ factor matrices. The end game is to control the unit normal to the row space of $\tilde{Y}(z)$, and to show that it does not place the vast majority of its weight away from its leading $n$ coordinates. 
		
		Observe that if $v$ is orthogonal to the first $Mn-1$ rows of $Y(z)$, the block structure which results from linearization implies that 
			\begin{equation*} 
				v^{(1)} = \frac{1}{z^{M-1}} X^{(1)} \cdots X^{(M-1)} v^{(M)} 
			\end{equation*}
			and
			\begin{equation*} 
				\tilde{I} v^{(M)} = \frac{1}{z} \tilde{X}^{(M)} v^{(1)}, 
			\end{equation*}
			where $\tilde{I}$ is the $(n-1) \times n$ matrix formed from the $n \times n$ identity matrix by removing the last row.  
		In view of these identities, we would like to show that a specific class of unit vectors $v$ satisfy an estimate which looks like:
		\begin{align}
			\left\|\prod_{i=1}^{M-1}\left(\frac{1}{z}
			X^{(i)}\right)v
			\right\|_2 > n^{-\epsilon_0}
		\end{align}
		for an appropriate set of values of $z \in \mathbb{C}$ (this set of complex numbers will depend on $M$).  One of the main technical challenges will be showing that $\epsilon_0$ can be chosen independent of $M$.
		
		We will need to get a handle on the small singular values of the matrices $X^{(i)}$ to accomplish our goal, so let's start there. We will let $\sigma^{(i)}_1 \leq \sigma^{(i)}_2 \leq \cdots \leq \sigma^{(i)}_n$ denote the ordered singular values of $X^{(i)}$, and let $u_j^{(i)}$ denote some choice of associated unit singular vectors:
		\begin{align}
			\left(X^{(i)}\right)^{*}\left(X^{(i)}\right)u_j^{(i)}
			=\left(\sigma_j^{(i)}\right)^2 u_j^{(i)}.
		\end{align}
		Since we have a decent estimate on the number of small singular values of our factor matrices, we will try to isolate all vectors which interact with these singular values into a space of modest dimension. The advantage here is that we may now apply epsilon-net arguments which are not admissible when dealing with incompressible vectors more generally. The construction of this space is straightforward and is the occupation of the next lemma. The parameter $\tau$ determines exactly which singular values we will categorize as small, and an explicit selection of $\tau$ will be made later in the argument.
		
		\begin{lemma}
			\label{vtauexist}
			Let $c,\tau>0$ be sufficiently small constants. With overwhelming probability, there exists a linear subspace $V_\tau$ with dimension at most $O(n^{1-\tau})$ such that for all unit vectors $v$ orthogonal to $V_\tau$:
			\begin{align} \label{eq:tauexist}
			\left\|X^{(1)} \dotsm X^{(M-1)}v\right\|_2 \geq \frac{cn^{(1/2)(M-1)}}
			{n^{(M-1)\tau}}.
			\end{align} 
			In addition, $V_\tau$ can be taken to be the linear subspace spanned by the singular vectors corresponding to the $O(n^{1-\tau})$ smallest singular values of $X^{(1)}\dotsm X^{(M-1)}$.  
		\end{lemma}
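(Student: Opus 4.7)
The plan is to build a working subspace iteratively, requiring that as $v$ is pushed through each factor $X^{(M-1)}, X^{(M-2)}, \ldots$ in turn, the running image remains orthogonal to the small-singular-value subspace of the next factor. First apply Lemma \ref{smallsingval} with the given $\tau$: with overwhelming probability, $\sigma_{n^{1-\tau}}(X^{(i)}) \geq c_0 n^{1/2-\tau}$ holds simultaneously for each $i=1,\ldots,M-1$. Let $B_i \subset \mathbb{C}^n$ be the span of the $n^{1-\tau}-1$ smallest right singular vectors of $X^{(i)}$, so $\dim B_i = O(n^{1-\tau})$ and any $w \perp B_i$ satisfies $\|X^{(i)} w\| \geq c_0 n^{1/2-\tau} \|w\|$. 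Define
\[ \widetilde V_\tau := B_{M-1} + \sum_{j=1}^{M-2} (X^{(M-1)})^* (X^{(M-2)})^* \cdots (X^{(M-j)})^* (B_{M-1-j}), \]
a sum of $M-1$ subspaces each of dimension at most $n^{1-\tau}-1$, so $\dim \widetilde V_\tau = O(n^{1-\tau})$ for fixed $M$.

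The key observation is that orthogonality to this sum transports cleanly into orthogonality of images: for $v \perp \widetilde V_\tau$ and $y_j := X^{(M-j)}\cdots X^{(M-1)} v$ (with $y_0 := v$), we have $y_{j-1} \perp B_{M-j}$ for each $j = 1, \ldots, M-1$. Indeed, for any $b \in B_{M-j}$, adjointness gives $\langle b, y_{j-1}\rangle = \langle (X^{(M-1)})^* \cdots (X^{(M-j+1)})^* b, v\rangle = 0$ by the definition of $\widetilde V_\tau$. Iterating the singular value bound, $\|y_j\| \geq c_0 n^{1/2-\tau} \|y_{j-1}\|$ for each $j$, so telescoping yields $\|X^{(1)} \cdots X^{(M-1)} v\| \geq c_0^{M-1} n^{(M-1)/2 - (M-1)\tau} \|v\|$, matching \eqref{eq:tauexist} with $c = c_0^{M-1}$.

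To transfer this to the singular-vector description of $V_\tau$, set $W := X^{(1)} \cdots X^{(M-1)}$ and $d := \dim \widetilde V_\tau$. The Courant--Fischer min-max principle gives $\sigma_{d+1}(W) \geq \min_{v \in \widetilde V_\tau^\perp,\,\|v\|=1} \|Wv\| \geq c_0^{M-1} n^{(M-1)/2 - (M-1)\tau}$. Defining $V_\tau$ to be the span of the $d$ smallest right singular vectors of $W$ then produces a subspace of dimension $d = O(n^{1-\tau})$ in which every unit $v \perp V_\tau$ satisfies $\|Wv\| \geq \sigma_{d+1}(W)$, as required.

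The main obstacle I anticipate is conceptual rather than computational: naively excluding only the small right singular vectors of $X^{(M-1)}$ gives no direct control on the direction of $X^{(M-1)} v$ relative to the SVD of $X^{(M-2)}$, so the running image could be absorbed by the bad subspace of some later factor and the product could still be tiny. The adjoint-preimage construction resolves this because orthogonality of $v$ to $(X^{(M-1)})^*(B)$ is by adjointness exactly the condition $X^{(M-1)} v \perp B$; aggregating these conditions over the $M-1$ factors only inflates the dimension of the excluded subspace by a multiplicative constant depending on the (fixed) parameter $M$.
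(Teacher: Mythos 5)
Your proof is correct and follows essentially the same strategy as the paper: build $V_\tau$ iteratively so that orthogonality to it forces the running image $y_{j-1}$ to avoid the small-singular-value subspace of the next factor $X^{(M-j)}$, telescope the resulting gain, and then upgrade to the singular-vector description of $V_\tau$ via Courant--Fischer. The one notable refinement is that you work with adjoint images $(X^{(M-1)})^*\cdots(X^{(M-j)})^*(B_{M-1-j})$ rather than orthogonal complements of inverse images $\bigl((X^{(M-1)})^{-1}(W_\tau^{(M-2)})^\perp\bigr)^\perp$ as the paper does; since $(A^{-1}U^\perp)^\perp = A^*U$ for invertible $A$ these are the same subspaces, but your formulation sidesteps having to invoke invertibility of the factor matrices (which the paper secures via a separate cited estimate) and makes the orthogonality transport a one-line adjoint computation.
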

		
		\begin{proof}
		We first construct a subspace $V_\tau$ with the desired properties; in the second half of the proof, we will show that $V_\tau$ can be taken to be the linear subspace spanned by the singular vectors corresponding to the $O(n^{1-\tau})$ smallest singular values of $X^{(1)}\dotsm X^{(M-1)}$.  
		Indeed, for each $i\in \{1,\ldots,M-1 \}$, let $W_\tau^{(i)}$ denote the vector space which is given by the span of $u_j^{(i)}$ for $1\leq j\leq n^{1-\tau}$. With overwhelming probability, we may assume that each of the factor matrices $X^{(1)},\ldots,X^{(M-1)}$ is both invertible (by Proposition 27 in  \cite{TV2}) and satisfies 
		(\ref{smallsingvalest}) (with the same $\tau$ as appears in the statement of the lemma, as we can take $\tau$ to be sufficiently small.)
		
		Define the space $\tilde{V}_\tau^{(M-1)}$ to simply be equal to $W_\tau^{(M-1)}$, 
		and define $Z_\tau^{(M-1)}$ to be the image of the vector space $\left(W_\tau^{(M-1)}\right)^\perp$ under the matrix  $\left(X^{(M-1)}\right)^{-1}$, which we may write $\left(X^{(M-1)}\right)^{-1}\left(W_\tau^{(M-1)}\right)^\perp$. Define $\tilde{V}_\tau^{(M-2)}$ to be the span of the union of $\tilde{V}_\tau^{(M-1)}$ with $\left(Z_\tau^{(M-1)}\right)^\perp$. Then the dimension of $\tilde{V}_\tau^{(M-2)}$ is at most $2n^{1-\tau}$ (because it is the span of the union of two spaces each with dimension at most $n^{1-\tau}$). 
		
		Because $\tilde{V}_\tau^{(M-2)}$ contains $\tilde{V}_\tau^{(M-1)}$, Lemma \ref{smallsingval} implies that every unit vector $v$ orthogonal to $\tilde{V}_\tau^{(M-2)}$ meets the following condition:
		\begin{align*}
			\|X^{(M-1)}v\|_{2}\geq cn^{1/2-\tau}.
		\end{align*}
		Further, the orthogonality between $v$ and $\tilde{V}_\tau^{(M-2)}$ ensures that $v\in Z_\tau^{(M-1)}$ by construction, and therefore that the vector $X^{(M-1)}v$ is orthogonal to the space $W_{\tau}^{(M-2)}$. Lemma \ref{smallsingval} then guarantees that:
		\begin{align*}
		\|X^{(M-2)}X^{(M-1)}v\|_{2}\geq cn^{1/2-\tau}\|X^{(M-1)}v\|_{2}	.
		\end{align*}
		
		Combining the preceding two estimates:
		\begin{align*}
			\left\|X^{(M-2)}X^{(M-1)}
			v\right\|_{2}\geq 
			c^2n^{1-2\tau}.
		\end{align*}	
		Continuing the construction iteratively, we construct a space $\tilde{V}_\tau^{(1)}$ with dimension at most $O(n^{1-\tau})$ such that for all unit vectors $v$ orthogonal to $\tilde{V}_\tau^{(1)}$:
		\begin{align}
		\label{orthogtovtauest}
		\left\|X^{(1)}\dotsm X^{(M-2)}X^{(M-1)}
		v\right\|_{2}\geq 
		c^{M-1}n^{(1/2)(M-1)-(M-1)\tau}.
		\end{align}
		
		This shows that the subspace $V_\tau$ has exactly the properties we require.  To complete the proof, we need to show that $V_\tau$ can be taken to be spanned by the singular vectors of $X^{(1)}\dotsm X^{(M-1)}$.  Indeed, we now use $V_\tau$ to construct another linear subspace $\Sigma_\tau$ which satisfies the same properties, but which is spanned by the singular vectors of the product.  Let $d$ be the dimension of $V_\tau$.  Fix a realization in which \eqref{eq:tauexist} holds and in which $d = O(n^{1-\tau})$.  Let $\Sigma_\tau$ denote the linear subspace spanned by the singular vectors of $X^{(1)}\dotsm X^{(M-1)}$ corresponding to singular values which are strictly smaller than $\frac{cn^{(1/2)(M-1)}}{n^{(M-1)\tau}}$ (if there are no such singular values, take $\Sigma_\tau$ to be the trivial subspace).  By the orthogonality of the singular vectors, it follows that if $v$ is a unit vector orthogonal to $\Sigma_\tau$, then
		\[ \| X^{(1)}\dotsm X^{(M-1)} v \|_2 \geq \frac{cn^{(1/2)(M-1)}}{n^{(M-1)\tau}}. \]
		It remains to show that the dimension of $\Sigma_\tau$ is $O(n^{1-\tau})$.  This follows from \eqref{eq:tauexist}.  Indeed, by the minimax principle for singular vectors (see, for instance, \cite[Problem III.6.1]{Bhatia}), it follows that
		\begin{align*}
			\sigma_{d+1}(X^{(1)}\dotsm X^{(M-1)}) &\geq \min_{v \in V_{\tau}^\perp, \|v \| = 1} \| X^{(1)}\dotsm X^{(M-1)} v \| \\
			&\geq \frac{cn^{(1/2)(M-1)}}{n^{(M-1)\tau}}. 
		\end{align*}
		This implies that $\dim(\Sigma_\tau) \leq d = O(n^{1-\tau})$, completing the proof of the lemma.  
		\end{proof}
		
		Next, we leverage the relatively small size of the space $V_\tau$ to control its behavior much in the same way we controlled compressible vectors. Specifically, we will see that it is unlikely for a unit vector in $V_\tau$ to be the first $n$ coordinates of a vector approximately normal to both the first $(M-1)n$ rows of $\tilde{Y}(z)$ (which we may condition on) and the remaining rows (which are still random). 
		
		\begin{lemma}
			\label{singspacelemma}
			Let $\tilde{I}_{n}$ be the $n-1$ by $n$ matrix formed by deleting the last row of the $n$-dimensional identity matrix, and assume that $z\in\mathbb{C}$ is such that $|z|\in [n^{1/2-\delta}, n^{1/2+\delta}]$ for some choice of $\delta\in(0,\frac{1}{4(M+1)})$. Then with overwhelming probability the matrices $X^{(1)}$,\ldots,$X^{(M-1)}$ are such that the following probability estimate holds with respect to the random matrix $\tilde{X}^{(M)}$:
			\begin{align*}
				\mathbf{P}\left\{\min_{\|v\|=1, v\in V_\tau}
				\left\|\left(z\tilde{I}_{n}-
				\tilde{X}^{(M)}
				\prod_{h=1}^{M-1}\left(\frac{1}{z}X^{(h)}\right)
				\right)v\right\|_2 \leq C_\delta n^{1/2-(M+1)\delta}
				\right\}\\
				\leq C_{\delta}^{-1} \exp(-c_\tau n).
			\end{align*}
			The constant $C_\delta>0$ depends on $\delta$, on $M$ and on the distribution of the matrix entries, while $c_\tau$ depends on the distribution of the matrix entries and also on the choice of $\tau$.
		\end{lemma}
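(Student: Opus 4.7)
My plan is to establish the bound by a triangle inequality, using two deterministic ingredients provided by suitable good events on the factors: for unit $v \in V_\tau$ the quantity $\|Bv\|$ is automatically small (where $B := \prod_{h=1}^{M-1} z^{-1} X^{(h)}$), and every unit $v \in V_\tau$ has $\|\tilde I_n v\|_2 = \sqrt{1-|v_n|^2}$ bounded below, i.e.\ $v$ cannot concentrate on the last coordinate $e_n$. The latter is the crux of the argument, and I would derive it from a probabilistic lower bound on the norm of the last column $Pe_n$ of $P := X^{(1)} \cdots X^{(M-1)}$.

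I would first condition on three good events on $X^{(1)},\ldots,X^{(M-1)}$, each holding with overwhelming probability: (i) the conclusion of Lemma~\ref{vtauexist}, giving $\|Pv\|_2 \le c\, n^{(M-1)/2 - (M-1)\tau}$ for every unit $v \in V_\tau$; (ii) the operator norm bounds $\|X^{(i)}\|_{op} \le (1+r_0)\sqrt{n}$, so $\|P\|_{op} \le (1+r_0)^{M-1} n^{(M-1)/2}$; and (iii) the last-column bound $\|Pe_n\|_2 \ge c' n^{(M-1)/2}$. For (iii) I would first observe that $\|X^{(M-1)} e_n\|_2^2$ is a sum of $n$ i.i.d.\ subgaussian squares, hence $\ge n/2$ with exponential probability; then, successively conditioning on the earlier factors, the proof technique of Lemma~\ref{singvect} (small-ball plus Chernoff applied to the coordinates of $X^{(j)} w$ for a fixed unit $w$) gives $\|X^{(j)} w\|_2 \ge c\sqrt{n}$ with exponential probability, and iterating through $j = M-2, M-3, \ldots, 1$ yields the claim.

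On these events, every unit $v \in V_\tau$ satisfies $\sqrt{1-|v_n|^2} \ge n^{-M\delta}$ for $n$ sufficiently large. Indeed, if $|v_n|^2 > 1-n^{-2M\delta}$ then writing $v = v_n e_n + v_\perp$ with $\|v_\perp\|_2 \le n^{-M\delta}$ gives
\begin{align*}
\|Pv\|_2 \;\ge\; |v_n|\,\|Pe_n\|_2 - \|P\|_{op}\|v_\perp\|_2 \;\ge\; (c'/2)\, n^{(M-1)/2}
\end{align*}
for large $n$, contradicting $\|Pv\|_2 \le c n^{(M-1)/2-(M-1)\tau}$ since $(M-1)\tau > 0$.

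Finally I would impose the exponentially likely event $\|\tilde X^{(M)}\|_{op} \le (1+r_0)\sqrt{n}$ on the independent matrix $\tilde X^{(M)}$. By the triangle inequality, uniformly in unit $v \in V_\tau$,
\begin{align*}
\bigl\|(z\tilde I_n - \tilde X^{(M)} B)v\bigr\|_2
&\ge |z|\,\|\tilde I_n v\|_2 - \|\tilde X^{(M)}\|_{op}\|Bv\|_2 \\
&\ge n^{1/2-(M+1)\delta} - O\!\left(n^{1/2+(M-1)\delta - (M-1)\tau}\right),
\end{align*}
using $|z| \ge n^{1/2-\delta}$ and $\|Bv\|_2 = |z|^{-(M-1)}\|Pv\|_2 \le c\, n^{(M-1)\delta-(M-1)\tau}$. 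Choosing $\tau > 2M\delta/(M-1)$ (compatible with $\tau < \gamma$ when $\delta$ is small enough) makes the subtracted term at most $\tfrac12 n^{1/2-(M+1)\delta}$, giving the stated bound with $C_\delta = 1/2$. The main obstacle in this plan is the geometric step forcing $V_\tau$ away from $e_n$: the high-dimensional subspace $V_\tau$ has no \emph{a priori} transparent relation to the coordinate basis, and one can only exclude its alignment with $e_n$ through the probabilistic input (iii); once this is in hand, the remainder is parameter bookkeeping.
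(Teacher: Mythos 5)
Your proposal is genuinely different from the paper's and, with a small fix, correct. The paper proves the statement by taking a $(1/\sqrt n)$-net of the unit ball of $V_\tau$ and running a case analysis on each net point $v$: if $y_v := Bv$ is not too small, a small-ball/Chernoff argument applied to the \emph{random} matrix $\tilde X^{(M)}$ gives anti-concentration; if $y_v$ is small, a deterministic size comparison using $|z|\|\tilde I_n v\|$ works, once one rules out $v$ concentrating on $e_n$ via the $Pe_n$ argument. You instead observe that, because $V_\tau$ may be taken to be the span of the small singular vectors of $P=X^{(1)}\cdots X^{(M-1)}$, \emph{every} unit $v\in V_\tau$ automatically satisfies $\|Pv\|_2\le cn^{(M-1)/2-(M-1)\tau}$, and you combine this with the $Pe_n$ argument to show no such $v$ aligns with $e_n$; a single deterministic triangle inequality then finishes, with no net, no anti-concentration over $\tilde X^{(M)}$, and a $\tau$-independent probability constant. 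This is a cleaner route and buys simplicity and a slightly stronger statement.

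The one genuine issue is your parameter bookkeeping. Your contradiction argument actually yields a \emph{constant} lower bound $\|\tilde I_n v\|_2\ge c''$ (for a constant $c''$ depending on $r_0, M$ and the entry distributions), not merely $n^{-M\delta}$; using the weaker bound needlessly forces the constraint $\tau>2M\delta/(M-1)$. This matters downstream: in the proof of Lemma~\ref{approxsmallsig} the lemma is applied with $\delta=\epsilon_0/8(M+1)$ and $\tau\le\epsilon_0/4M$, and one checks that $2M\delta/(M-1)=M\epsilon_0/(4(M+1)(M-1))>\epsilon_0/4M$ for all $M$, so as written your proof cannot be plugged in there. Deriving the constant lower bound instead gives the weaker requirement $\tau>M\delta/(M-1)$, which for $M\ge 2$ is compatible with $\tau\le\epsilon_0/4M$. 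Even so, you should note that the paper's net-plus-anti-concentration proof works for \emph{any} small $\tau>0$, whereas your argument intrinsically needs $\tau$ to dominate $\delta$ (so that $\|Bv\|_2$ decays), and anyone using your proof would need to check this constraint against whatever $\tau$ is chosen at the point of application.
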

				
		\begin{proof}
		By Lemma \ref{vtauexist}, the dimension of $V_\tau$ is at most $O(n^{1-\tau})$ with overwhelming probability. Assume this is so, and choose an $(1/\sqrt{n})$-net of the unit ball of $V_\tau$, which we will denote by $\Omega_\tau$. By Lemma \ref{epsilonnet} we can take this net to have cardinality at most:
			\begin{align} \label{eq:Omegasizebnd}
				\left| \Omega_\tau \right| \leq \left(1+2n^{1/2}\right)^{2n^{1-\tau}}
				\leq \exp\left([(3/2)n^{-\tau}\log(n)] n\right)
			\end{align}
			for $n$ sufficiently large.  
		Crucially, as long as $\tau>0$ this cardinality will grow slower than $\exp(cn)$ for any fixed choice of $c>0$. Now, fix an arbitrary choice of $v\in\Omega_\tau$ and, for notational simplicity, define the vector $y_v$ as follows:
			\begin{align*}
				y_v=\left(\frac{1}{z}X^{(1)} \right)\left(\frac{1}{z}X^{(2)} \right)
				\left(\frac{1}{z}X^{(3)} \right)\dotsm\left(\frac{1}{z}X^{(M-1)} \right)
				v.
			\end{align*}
		For $y_v \neq 0$, Lemma \ref{smallballprob}, the small ball probability bound, implies the following anti-concentration estimate for $1\leq j \leq n-1$:
			\begin{align*}
				\mathbf{P}\left\{ \left|\sum_{i=1}^{n} 
				\tilde{X}^{(M)}_{j,i}(y_v)_i-zv_j
				\right|
				\leq c\|y_v\|_{2}
				\right\}\leq 1-c.
			\end{align*}
		Let $T_j$, for $1\leq j \leq n-1$, be the indicator variable associated with the event that the following estimate holds:
			\begin{align*}
				\left|\sum_{i=1}^n \tilde{X}^{(M)}_{j,i}(y_v)_i-zv_j\right|^2
				\geq c^2\|y\|_{2}^{2}.
			\end{align*}
			By the small ball probability bound, each $T_j$ equals $1$ with probability at least $c$, and by the Chernoff inequality (Lemma \ref{lemma:chernoff}) for sums of indicator variables:
			\begin{align*}
				\mathbf{P}\left\{\sum_{j=1}^{n-1} T_{j} \leq \frac{c}{2}(n-1)
				\right\}
				\leq \exp(-c(n-1)/8).
			\end{align*}
			
			On the complement of this event at least $(c/2)(n-1)$ many of the $T_j$ must then be equal to 1. Consequently:
			\begin{align*}
				\mathbf{P}\left\{\left\| \tilde{X}^{(M)}y_v-z\tilde{I}_{n}v
				\right\|_{2}^2 \leq \frac{c^3}{2} \|y_v\|_{2}^{2}(n-1)\right\}\\
				\leq \exp(-cn/8)
			\end{align*}
			for a slightly different constant $c > 0$ and all $n$ sufficiently large.  To proceed further, we need to have an estimate of the quantity $\|y_v\|_{2}$, which we will handle by inspecting various cases. If we assume that $\|y_v\|_{2} \geq (n^{1/2}/|z|)^{M-1}$, then we can conclude that there exist constants $C_0,c_0>0$ depending only on the entry distributions such that for this choice of unit vector $v$:
			\begin{align}
				\label{ssv1}
				\mathbf{P}\left\{\left\| \tilde{X}^{(M)}y_v-z\tilde{I}_{n}v
				\right\|_{2} \leq C_0|z|(n^{1/2}/|z|)^{M}\right\}
				\leq C_0^{-1} \exp(-c_0n).
			\end{align}
			If we instead assume that $\|y_v\|_{2} \geq n^{-2\delta}$,  then we can similarly conclude that there exist constants $C_0,c_0>0$ depending only on the entry distributions such that for this choice of unit vector $v$:
			\begin{align}
				\label{ssv1alt}
				\mathbf{P}\left\{\left\| \tilde{X}^{(M)}y_v-z\tilde{I}_{n}v
				\right\|_{2} \leq C_0 n^{1/2-2\delta}\right\}
				\leq C_0^{-1} \exp(-c_0n).
			\end{align}
			
			These two cases handle the situations where $y_v$ is relatively large, we now consider the case where $y_v$ is relatively small. Specifically, assume that:
			\begin{align}
				\|y_v\|_{2} \leq \min\left\{(n^{1/2}/|z|)^{M-1},n^{-2\delta}\right\}. 
			\end{align}
			The Chernoff bound method employed above is not helpful for such $y_v$, as the bounds we would obtain would be too weak to establish the desired result. We therefore employ a different approach in this case, which takes advantage of the largeness of $|z|$ instead of the randomness of the rows of $\tilde{X}^{(M)}$. For this method to work we need to make sure that $v$ doesn't put too much mass on its last coordinate, so for now also assume that the following estimate holds:
			\begin{align}
				\label{normincomp}
				\|\tilde{I}_{n}v\|_{2}\geq n^{-\delta/2}.
			\end{align} 
			We will justify this assumption in due course, but first notice that the bound on the operator norm of $\tilde{X}^{(M)}$, along with the smallness of $\|y_v\|_{2}$, imply:
			\begin{align*}
				\|\tilde{X}^{(M)}y_v\|_{2}\leq (1+r_0)n^{1/2-2\delta}
			\end{align*}
			with exponential probability.  
			Comparing magnitudes, we have the estimate:
			\begin{align*}
				\|z\tilde{I}_{n}v-\tilde{X}^{(M)}y_v\|_{2}\geq 
				\left(|z|n^{-\delta/2}-(1+r_0)n^{1/2-2\delta}\right). 
			\end{align*}
			Using our assumptions on the magnitude of the parameter $|z|$ and also bounding $3\delta/2$ by $2\delta$, we obtain the following estimate:
			\begin{align}
				\label{ssv2}
				\|z\tilde{I}_{n}v-\tilde{X}^{(M)}y_v\|_{2}\geq 
				cn^{1/2-2\delta}
			\end{align}
			for a constant $c > 0$.  
			
			It remains to justify (\ref{normincomp}), which, as in the case of compressible vectors, boils down to the fact that the product of our random matrices with any fixed unit vector is not small with very high probability. To that end, let $e_n\in\mathbb{C}^n$ denote the unit Cartesian coordinate vector supported only on the $n$-th coordinate. By the same argument as we have used in the compressible case (specifically, by the Chernoff bound calculation in the proof of Lemma \ref{singvect}):
			\begin{eqnarray*}
				\mathbf{P}\left\{\|X^{(M-1)}e_n\|_2\leq C\sqrt{n}\right\}
				\leq O\left(\exp(-cn)\right).
			\end{eqnarray*}
			Iterating this argument and applying our assumptions on the magnitude of $|z|$, it is exponentially likely that:
			\begin{align*}
				\left\|\left(\frac{1}{z}X^{(1)} \right)\left(\frac{1}{z}X^{(2)} \right)
				\left(\frac{1}{z}X^{(3)} \right)\dotsm\left(\frac{1}{z}X^{(M-1)} \right)
				e_n\right\|_2\geq cn^{(M-1)/2}|z|^{-(M-1)}.
			\end{align*}
			On the other hand, if $v$ is a vector such that $\| \omega e_n-v\|_{2}\leq Cn^{-\delta/2}$ for some $\omega \in \mathbb{C}$ with $|\omega| = 1$, then (by the operator norm bound on the matrices $X^{(i)}$):
			\begin{align*}
				\left\|\left(\frac{1}{z}X^{(1)} \right)\left(\frac{1}{z}X^{(2)} \right)
				\left(\frac{1}{z}X^{(3)} \right)\dotsm\left(\frac{1}{z}X^{(M-1)} \right)
				(v-\omega e_n)\right\|_2\\
				\leq C(1+r_0)^{M-1}|z|^{-(M-1)}n^{(M-1)/2}n^{-\delta/2}.
			\end{align*}
			Using the fact that $C(1+r_0)^{M-1}n^{-\delta/2}\to 0$ as $n\to \infty$, as well as the triangle inequality:
			\begin{align*}
				\left\|\left(\frac{1}{z}X^{(1)} \right)\left(\frac{1}{z}X^{(2)} \right)
				\left(\frac{1}{z}X^{(3)} \right)\dotsm\left(\frac{1}{z}X^{(M-1)} \right)
				v\right\|_2\geq cn^{(M-1)/2}|z|^{-(M-1)}.
			\end{align*}
			This in turn implies (\ref{normincomp}), as we can assume that if $v$ is such that (\ref{normincomp}) fails then $\|y_v\|_{2}\geq c(n^{1/2}/|z|)^{M-1}$ necessarily, and we can take $c < 1$.
			
			Combining (\ref{ssv2}) with (\ref{ssv1}) and (\ref{ssv1alt}), we see that for any choice of unit vector $v$ in the net $\Omega_\tau$: 
			\begin{align*}
				\mathbf{P}\left\{\left\| \tilde{X}^{(M)}y_v-z\tilde{I}_{n}v
				\right\|_{2} \leq C_0
				\min\left\{|z|(n^{1/2}/|z|)^{M}
				,n^{1/2-2\delta}\right\}
				\right\}
				\leq \exp(-c_0n).
			\end{align*}
			Taking the union bound over all $v$ in our net and using \eqref{eq:Omegasizebnd}, we obtain
			\begin{align} \label{eq:unionbndstar}
				\mathbf{P}\left\{
				\min_{v\in\Omega_\tau}
				\left\| \tilde{X}^{(M)}y_v-z\tilde{I}_{n}v
				\right\|_{2} \leq C_0
				\min\left\{|z|(n^{1/2}/|z|)^{M}
				,n^{1/2-2\delta}\right\}
				\right\}\\
				\leq O\left(\exp(-c_0n)\right). \nonumber
			\end{align}
			To finish the proof, we approximate an arbitrary vector in $V_\tau$ with a vector in our net. For any unit $v^\prime\in V_\tau$ and $v\in\Omega_\tau$, the triangle inequality and the definition of $y_v$ imply:
			\begin{align*}
				\left\| \tilde{X}^{(M)}y_v-z\tilde{I}_{n}v
				\right\|_{2}
				- \left\|\left(z\tilde{I}_{n}-\tilde{X}^{(M)}
				\prod_{h=1}^{M-1}\left(\frac{1}{z}X^{(h)}\right)\right)\right\|_{op}
				\left\|v-v^\prime\right\|_{2}\\
				\leq\left\|\left(z\tilde{I}_{n}-\tilde{X}^{(M)}\prod_{h=1}^{M-1}\left(\frac{1}{z}X^{(h)}\right)\right)v^\prime\right\|_2.
			\end{align*}
			By construction of our net we may take $\left\|v-v^\prime\right\|_{2}\leq n^{-1/2}$, and by the triangle inequality again we also have the operator norm estimate:
			\begin{align*}
				\left\|\left(z\tilde{I}_{n}-\tilde{X}^{(M)}\prod_{h=1}^{M-1}\left(\frac{1}{z}X^{(h)}\right)\right)\right\|_{op}
				\leq |z|\left(1+(1+r_0)^M(n^{1/2}/|z|)^{M}\right),
			\end{align*}
			which holds with exponential probability by \eqref{genopbound}.  In view of \eqref{eq:unionbndstar}, it remains to bound the following quantity from below:
			\begin{align*}
				\min\left\{|z|(n^{1/2}/|z|)^{M}
				,n^{1/2-2\delta}\right\}-
				\frac{|z|}{\sqrt{n}}\left(1+(1+r_0)^M(n^{1/2}/|z|)^{M}\right). 
			\end{align*}
			To accomplish this, we will inspect both instances of the minimum. On one hand, we have:
			\begin{align*}
				&|z|(n^{1/2}/|z|)^{M}
				-
				\frac{|z|}{\sqrt{n}}\left(1+(1+r_0)^M(n^{1/2}/|z|)^{M}\right)
				\\
				&= |z|\left(
				\left(1-\frac{(1+r_0)^M}{\sqrt{n}}\right)
				(n^{1/2}/|z|)^M-\frac{1}{\sqrt{n}}
				\right)\\
				&\geq cn^{1/2-\delta}n^{-M\delta}=
				cn^{1/2-(M+1)\delta}
			\end{align*}
			for $n$ sufficiently large.  
			On the other hand:
			\begin{align*}
				n^{1/2-2\delta}-&
				\frac{|z|}{\sqrt{n}}
				\left(1+(1+r_0)^M(n^{1/2}/|z|)^{M}\right)\\
				&\geq n^{1/2-2\delta}-cn^{(M-1)\delta}\\
				&\geq cn^{1/2-2\delta}.
			\end{align*}
			To obtain the last inequality, we have used the assumption $\delta<\frac{1}{4(M+1)}$. Since $M+1 \geq 2$, we can therefore conclude:
			\begin{align*}
				cn^{1/2-(M+1)\delta}
				\leq\left\|\left(z\tilde{I}_{n}-\tilde{X}^{(M)}\prod_{h=1}^{M-1}\left(\frac{1}{z}X^{(h)}\right)\right)v^\prime\right\|_2. 
			\end{align*}
			Since $v^\prime$ was an arbitrary unit vector in $V_\tau$ this concludes the proof.
		\end{proof}
		
		We now have a level of control which we can live with over the space $V_\tau$, and it remains to take similar control over vectors which lie largely but not entirely in $V_\tau$. The epsilon-net methods which we have been employing won't work in this case, as the portion of such a vector which does not lie in $V_\tau$ may lie in one of any number of directions. Instead, our next lemma follows from an approximation argument.
		
		\begin{lemma}
			\label{approxsmallsig}
			Let $\epsilon_0>0$ be a sufficiently small constant, and
			suppose that the estimate $n^{1/2-\epsilon_0/16M} \leq |z| \leq n^{1/2+\epsilon_0/16M}$ holds. Let $V[\epsilon_0]$ denote the set of all unit vectors $v$ which satisfy:
			\begin{align*}
				\left\|\prod_{h=1}^{M-1}\left(\frac{1}{z}X^{(h)}\right)v \right\|\leq n^{-\epsilon_0}.
			\end{align*}
			Then there exists a positive constant $C_{\epsilon_0}$ such that with overwhelming probability $X^{(1)},\ldots,X^{(M-1)}$ are such that the following probability bound holds with respect to the random matrix $\tilde{X}^{(M)}$:
			\begin{align*}
				\mathbf{P}\left\{
				\min_{v\in V[\epsilon_0]}
				\left\|\left(z\tilde{I}_{n}-\tilde{X}^{(M)}\prod_{h=1}^{M-1}\left(\frac{1}{z}X^{(h)}\right)\right)
				v\right\|_2 \leq C_{\epsilon_0}\right\}\\
				\leq O\left(\exp(-c_{\epsilon_0}n)\right).
			\end{align*}
			The choice of $\epsilon_0$ depends only on the atom distributions of the factor matrices.  Here $C_{\epsilon_0}$ depends on $\epsilon_0$ and $M$, and $c_{\epsilon_0}$ depends on $\epsilon_0$, the atom distributions of the factor matrices, and $M$.  
		\end{lemma}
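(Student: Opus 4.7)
The plan is to show that any unit $v \in V[\epsilon_0]$ lies extremely close to the low-dimensional subspace $V_\tau$ from Lemma \ref{vtauexist}, at which point Lemma \ref{singspacelemma} essentially finishes the job. I will fix $\tau>0$ much smaller than $\epsilon_0/(M-1)$ and apply Lemma \ref{vtauexist}, taking $V_\tau$ to be the span of right singular vectors of $X^{(1)}\dotsm X^{(M-1)}$ corresponding to its smallest singular values (as permitted by the second half of that lemma). For the orthogonal decomposition $v = v_\tau + v_\tau^\perp$, the singular value decomposition then sends orthogonal right singular subspaces to orthogonal left singular subspaces, yielding the key Pythagorean identity
\[ \|X^{(1)}\dotsm X^{(M-1)} v\|_2^2 = \|X^{(1)}\dotsm X^{(M-1)}v_\tau\|_2^2 + \|X^{(1)}\dotsm X^{(M-1)}v_\tau^\perp\|_2^2. \]

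Combining this identity with the defining bound of $V[\epsilon_0]$, the assumption $|z|\leq n^{1/2+\epsilon_0/(16M)}$, and the lower bound of Lemma \ref{vtauexist} applied to $v_\tau^\perp/\|v_\tau^\perp\|_2$ gives
\[ \|v_\tau^\perp\|_2 \;\leq\; c^{-1}\, n^{\,(M-1)\tau \,+\, (M-1)\epsilon_0/(16M) \,-\, \epsilon_0} \;=\; O(n^{-\delta'}), \]
where $\delta' = \epsilon_0 - (M-1)\epsilon_0/(16M) - (M-1)\tau > 0$ whenever $\tau$ is taken sufficiently small. For $M\geq 2$ this can in fact be arranged to satisfy $\delta' > \epsilon_0/8$. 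In particular $\|v_\tau\|_2\geq 1/\sqrt{2}$ for large $n$, so $v_\tau/\|v_\tau\|_2$ is a bona-fide unit vector in $V_\tau$.

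With this reduction in hand, I will invoke Lemma \ref{singspacelemma} with $\delta := \epsilon_0/(16M)$, which lies in the required range $(0,1/(4(M+1)))$ once $\epsilon_0$ is small. On its exponentially-likely good event, every unit vector of $V_\tau$ --- including $v_\tau/\|v_\tau\|_2$ --- is mapped by $z\tilde{I}_n - \tilde{X}^{(M)}\prod_h \tfrac{1}{z} X^{(h)}$ to a vector of norm at least $C\,n^{1/2 - (M+1)\epsilon_0/(16M)}$. The contribution from $v_\tau^\perp$ is controlled via the operator-norm bound \eqref{upboundnorm} by $O\bigl(n^{1/2 + (M-1)\epsilon_0/(16M) - \delta'}\bigr)$; for $\tau$ small enough that $\delta' > \epsilon_0/8$, this perturbation is negligible next to the main term, so the triangle inequality produces a lower bound on $\|(z\tilde{I}_n-\tilde{X}^{(M)}\prod_h\tfrac{1}{z}X^{(h)})v\|_2$ which in fact far exceeds any fixed constant $C_{\epsilon_0}$.

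The main obstacle is the first step: using the singular-vector realization of $V_\tau$ to decouple the product-norm constraint defining $V[\epsilon_0]$ into a principal part inside $V_\tau$ (which Lemma \ref{singspacelemma} handles) plus a genuinely small orthogonal residual. Without the Pythagorean identity the two pieces would interlock and $\|v_\tau^\perp\|_2$ could only be bounded in a much weaker, non-useful way; this is precisely why the strengthening of Lemma \ref{vtauexist} that lets $V_\tau$ be a singular-vector span is essential. Beyond that step, the proof is mostly bookkeeping of exponents in the small parameters $\tau$, $\epsilon_0$, and $1/M$.
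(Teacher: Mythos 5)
Your proof is correct and takes essentially the same route as the paper: both decompose $v = v_\tau + v_\tau^\perp$, exploit the strengthened conclusion of Lemma \ref{vtauexist} (that $V_\tau$ may be taken to be spanned by singular vectors of $X^{(1)}\dotsm X^{(M-1)}$) to obtain the Pythagorean decoupling, deduce that $\|v_\tau^\perp\|_2$ is polynomially small, and then finish via Lemma \ref{singspacelemma} on $v_\tau$ together with an operator-norm bound on the $v_\tau^\perp$ contribution. The only differences are bookkeeping choices of $\tau$ and $\delta$; the paper uses $\tau \le \epsilon_0/(4M)$ and $\delta = \epsilon_0/(8(M+1))$ to arrive at $\|v_\tau^\perp\|_2 \le n^{-\epsilon_0/2}$, while you choose slightly different exponents that serve the same purpose.
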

		
		\begin{proof}
		Choose $0<\tau \leq \epsilon_0/4M$. Applying Lemma \ref{singspacelemma} with $\delta=\epsilon_0/8(M+1)$, it is overwhelmingly probable that $X^{(1)}$,\ldots,$X^{(M-1)}$ are such that with probability at least $1-C_0\exp(-c_0n)$ (with respect to $\tilde{X}^{(M)}$) the following event occurs:
		\begin{align}
			\label{siglem}
			\min_{\|v\|=1, v\in V_\tau}
			\left\|\left(z\tilde{I}_{n}
			-\tilde{X}^{(M)}
			\prod_{h=1}^{M-1}\left(\frac{1}{z}X^{(h)}\right)
			\right)v\right\|_2 \geq Cn^{1/2-\epsilon_0/8}.
		\end{align}
		We will obtain our result by essentially just observing that (\ref{siglem}) implies:
		\begin{align}
		\label{siglem_aim}
		\min_{v\in V[\epsilon_0]}
		\left\|\left(z\tilde{I}_{n}-\tilde{X}^{(M)}\prod_{h=1}^{M-1}\left(\frac{1}{z}X^{(h)}\right)\right)
		v\right\|_2 \geq C_{\epsilon_0}.
		\end{align}
		Indeed, let $v$ be any unit vector such that:
		\begin{align}
			\label{smallprod}
			\left\|\prod_{h=1}^{M-1}\left(\frac{1}{z}X^{(h)}\right)v\right\|_{2} \leq n^{-\epsilon_0}.
		\end{align}
		We can decompose $v=v^{(1)}+v^{(2)}$, where $v^{(1)}\in V_\tau$ and $v^{(2)}$ is orthogonal to $V_\tau$ (and therefore to $v^{(1)}$ as well). We would like to show that $v^{(2)}$ must be small, and that therefore our vector $v$ is well approximated by its projection $v^{(1)}$. Using the fact that $v^{(2)}$ is orthogonal to $V_\tau$ (and therefore lies entirely in the span of the singular vectors associated with the ``large" singular values), as well as our assumptions about the magnitude of the complex parameter $z$, we obtain:
		\begin{align*}
			n^{-(M-1)\epsilon_0/(2M)}\|v^{(2)}\|_{2}
			\leq
			\left\|\prod_{h=1}^{M-1}\left(\frac{1}{z}X^{(h)}\right)v^{(2)}
			\right\|_2. 
		\end{align*}
		Following Lemma \ref{vtauexist}, we see that $V_{\tau}$ is spanned by the singular vectors corresponding to the smallest singular values of the product $X^{(1)} \cdots X^{(M-1)}$.  Thus, $v^{(1)}$ can be expressed as a linear combination of singular vectors in $V_\tau$, and $v^{(2)}$ can be expressed as linear combinations of singular vectors from $V_\tau^\perp$.  Hence, the orthogonality of singular vectors implies:
		\begin{align*}
		\left\|\prod_{h=1}^{M-1}\left(\frac{1}{z}X^{(h)}\right)v\right \|_{2} 
		\geq n^{-(M-1)\epsilon_0/(2M)}\|v^{(2)}\|_{2}. 
		\end{align*}
		Substituting in (\ref{smallprod}):
		\begin{align*}
			n^{-(M-1)\epsilon_0/(2M)}\|v^{(2)}\|_{2}
			\leq \left\|\prod_{h=1}^{M-1}\left(\frac{1}{z}X^{(h)}\right)v\right\|_{2}\leq n^{-\epsilon_0}.
		\end{align*}
		And therefore:
		\begin{align}
		\label{approxtau}
		\|v^{(2)}\|_{2}\leq
		 n^{-\epsilon_0/2}.
		\end{align}
		Applying the triangle inequality and $\|v-v^{(1)}\|_{2}=\|v^{(2)}\|_{2}$:
		\begin{align*}
			\left\|\left(z\tilde{I}_{n}-\tilde{X}^{(M)}
			\prod_{h=1}^{M-1}\left(\frac{1}{z}X^{(h)}\right)
			\right)v^{(1)}\right\|_2- \left\|\left(z\tilde{I}_{n}-\tilde{X}^{(M)}\prod_{h=1}^{M-1}\left(\frac{1}{z}X^{(h)}\right)\right)
			\right\|_{op}
			\left\|v^{(2)}\right\|_{2}\\
			\leq
			\left\|\left(z\tilde{I}_{n}-\tilde{X}^{(M)}
			\prod_{h=1}^{M-1}
			\left(\frac{1}{z}X^{(h)}\right)\right)v\right\|_2.
		\end{align*}
		By \eqref{upboundnorm}, \eqref{siglem}, and \eqref{approxtau}, with exponential probability, 
		\begin{align*}
			\left\|\left(z\tilde{I}_{n}-\tilde{X}^{(M)}
			\prod_{h=1}^{M-1}\left(\frac{1}{z}X^{(h)}\right)
			\right)v\right\|_2 
			\geq C_{\epsilon_0}n^{1/2-\epsilon_0/8}
			- Cn^{1/2-\epsilon_0/4}.
		\end{align*}
		
		This is what we wanted to show, as (\ref{siglem_aim}) is now established. \end{proof}
		
		We are now in a position to prove:
		
		\begin{lemma}
			\label{conkboundlem1}
			Suppose that $n^{1/2-\epsilon_0/16M}\leq |z|\leq n^{1/2+\epsilon_0/16M}$, and let $u$ be any unit vector orthogonal to the subspace spanned by the first $Mn-1$ rows of $Y(z)$. If we write $u=(u^{(1)},\ldots,u^{(M)})$, where each $u^{(i)}$ has $n$ entries, it is overwhelmingly probable that $X^{(1)},\ldots,X^{(M-1)}$ are such that the following probability estimate holds (with respect to $\tilde{X}^{(M)}$):
			\begin{align}
				\label{conkbound}
				\mathbf{P}\left\{\|u^{(1)}\|_{2}\leq n^{-2\epsilon_0} \right\}
				\leq \exp(-c_{\epsilon_0} n).
			\end{align}
			Here, $\epsilon_0>0$ is sufficiently small constant (where the maximal legal choice of $\epsilon_0$ depends on the atom distributions of the factor matrices).
		\end{lemma}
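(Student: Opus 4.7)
The plan is to translate the orthogonality hypothesis on $u$ into a kernel condition for the operator appearing in Lemma \ref{approxsmallsig}, and then transfer the resulting lower bound from $u^{(M)}$ to $u^{(1)}$ by elementary bookkeeping between the norms of the blocks of $u$. Writing the condition that $u$ annihilates each of the first $Mn-1$ rows of $Y(z)$ block by block produces the recursion $X^{(h)} u^{(h+1)} = z u^{(h)}$ for $h = 1,\ldots,M-1$, along with the truncated equation $\tilde{X}^{(M)} u^{(1)} = z \tilde{I}_n u^{(M)}$ coming from the incomplete last block row. With overwhelming probability each $X^{(h)}$ is invertible (Proposition 27 of \cite{TV2}), so the recursion yields $u^{(1)} = \prod_{h=1}^{M-1}(z^{-1} X^{(h)}) u^{(M)}$. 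Substituting into the last equation produces
\[ \left( z \tilde{I}_n - \tilde{X}^{(M)} \prod_{h=1}^{M-1} \bigl( \tfrac{1}{z} X^{(h)} \bigr) \right) u^{(M)} = 0. \]
Since the recursion is invertible, $u^{(M)} = 0$ would force $u=0$, contradicting $\|u\|_2 = 1$; hence $u^{(M)} \neq 0$.

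Setting $v := u^{(M)}/\|u^{(M)}\|_2$, we obtain a unit vector that is annihilated by the operator appearing in Lemma \ref{approxsmallsig}. Applying that lemma with the same $\epsilon_0$, with overwhelming probability in $X^{(1)},\ldots,X^{(M-1)}$ and on the complement of an exponentially small event in $\tilde{X}^{(M)}$, every unit vector in $V[\epsilon_0]$ admits a strictly positive lower bound on the norm of its image; consequently $v \notin V[\epsilon_0]$. Rewriting this in terms of the recursion,
\[ \frac{\|u^{(1)}\|_2}{\|u^{(M)}\|_2} \;=\; \left\| \prod_{h=1}^{M-1} \bigl( \tfrac{1}{z} X^{(h)} \bigr) v \right\|_2 \;>\; n^{-\epsilon_0}. \]

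It remains to lower bound $\|u^{(M)}\|_2$. The operator-norm estimate \eqref{upboundnorm}, combined with $|z| \geq n^{1/2 - \epsilon_0/16M}$, gives $\|u^{(i)}\|_2 \leq |z|^{-1} \|X^{(i)}\|_{op} \|u^{(i+1)}\|_2 \leq (1+r_0) n^{\epsilon_0/16M} \|u^{(i+1)}\|_2$ for each $i = 1,\ldots,M-1$. Iterating and inserting into the normalization $\sum_{i=1}^{M} \|u^{(i)}\|_2^2 = 1$ forces $\|u^{(M)}\|_2 \geq c \, n^{-\epsilon_0/16}$ for some constant $c = c(M, r_0) > 0$. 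Combining this with the preceding display yields $\|u^{(1)}\|_2 \geq c \, n^{-17\epsilon_0/16}$, which exceeds $n^{-2\epsilon_0}$ for all $n$ sufficiently large, establishing the claim on the complement of an exponentially small event.

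The conceptually nontrivial step is the first one: recognizing that the orthogonality conditions collapse onto precisely the operator of Lemma \ref{approxsmallsig}, placing $u^{(M)}$ in its kernel. Everything else is a simple comparison of the norms of the blocks of $u$ using the operator-norm event already in play throughout this section, and no further probabilistic input is required beyond Lemma \ref{approxsmallsig}, \eqref{upboundnorm}, and the invertibility of each $X^{(h)}$.
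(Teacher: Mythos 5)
Your proof is correct and follows essentially the same route as the paper's: you use the block structure to derive $u^{(1)} = \prod_{h=1}^{M-1}(z^{-1}X^{(h)})u^{(M)}$ and the truncated equation to place $u^{(M)}/\|u^{(M)}\|_2$ in the kernel of the operator from Lemma~\ref{approxsmallsig}, conclude $u^{(M)}/\|u^{(M)}\|_2 \notin V[\epsilon_0]$, and then bound $\|u^{(M)}\|_2$ from below using \eqref{upboundnorm}. The only difference is cosmetic: you track the powers of $n^{\epsilon_0/16M}$ a bit more carefully, getting $\|u^{(M)}\|_2 \gtrsim n^{-\epsilon_0/16}$ and hence $\|u^{(1)}\|_2 \gtrsim n^{-17\epsilon_0/16}$, while the paper is content with the coarser bounds $\|u^{(M)}\|_2 \geq n^{-\epsilon_0}$ and $\|u^{(1)}\|_2 \geq n^{-2\epsilon_0}$; both reach the stated conclusion.
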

		
		\begin{proof}
		
		By Lemma \ref{approxsmallsig} we may safely work on the event that:
		\begin{align}
			\label{nonzeq}
			\min_{v\in V[\epsilon_{0}]}
			\left\|\left(z\tilde{I}_{n}-\tilde{X}^{(M)}\prod_{h=1}^{M-1}\left(\frac{1}{z}X^{(h)}\right)\right)
			v\right\|_2 \geq C_{\epsilon_0}.
		\end{align}
		This is because the first $M-1$ factor matrices are, with overwhelming probability, such that this event is exponentially likely with respect to $\tilde{X}^{(M)}$.	Since $u$ is normal to the span of the first $nM-1$ rows of $Y(z)$ by assumption, the block structure of $Y(z)$ implies that for $1\leq j\leq M-1$:
		\begin{align*}
			u^{(j)}=\left(\prod_{h=j}^{M-1}\frac{1}{z}X^{(h)}\right)u^{(M)}.
		\end{align*}
		And in particular:
		\begin{align} \label{eq:u1intermsofum}
			u^{(1)}=\left(\prod_{h=1}^{M-1}\frac{1}{z}X^{(h)}\right)u^{(M)}.
		\end{align}
		This is because, by the block structure and the orthogonality of $u$, we can express each $u^{(i)}$ in terms of $u^{(i+1)}$. Since $\|z^{-1}X^{(h)}\|_{op}$ is at most $(1+r_0)n^{\epsilon_0/16M}$, with exponential probability, the fact that $u$ is a unit vector implies that the 2-norm of $u^{(M)}$ can not be too small. Specifically, we can assume that:
		\begin{align}
			\label{umbound}
		n^{-\epsilon_0}\leq \|u^{(M)}\|_{2}.
		\end{align}
		
		By \eqref{eq:u1intermsofum} and the orthogonality of $u$ to the last $n-1$ rows of $\tilde{Y}(z)$:
		\begin{align*}
			\left\|\left(z\tilde{I}_{n}-\tilde{X}^{(M)}\prod_{h=1}^{M-1}\left(\frac{1}{z}X^{(h)}\right)\right)
			u^{(M)}\right\|_2=0.
		\end{align*}
		Comparing with (\ref{nonzeq}), we have that $u^{(M)}/\|u^{(M)}\|_{2}$ cannot lie in $V[\epsilon_0]$, and this (along with (\ref{umbound}) and \eqref{eq:u1intermsofum}) implies:
		\begin{align*}
			\|u^{(1)}\|_{2}=\|u^{(M)}\|_{2}\left\|
			\left(\prod_{h=1}^{M-1}\frac{1}{z}X^{(h)}\right)
			\frac{u^{(M)}}{\|u^{(M)}\|_{2}} \right\|_2
			\geq n^{-2\epsilon_0}.
		\end{align*}
		This is what we wanted to show. \end{proof}

		We will also need the following lemma, which follows from a line of reasoning similar to the one used in establishing Lemma \ref{singvect}.
		
		\begin{lemma}
			\label{nocompcomponent}
			For every sufficiently small constant $\theta > 0$ and for every sufficiently small $t_0 > 0$ the following holds with $a = 1/\log n$, $b = \theta n^{-100(M-1)t_0}$, and $n^{1/2-t_0} \leq |z| \leq n^{1/2+t_0}$.  Let $u \in\mathbb{C}^{Mn}$ be a unit vector orthogonal to the first $Mn-1$ rows of the random matrix $Y(z)$. Write $u=(u^{(1)},u^{(2)},\ldots,u^{(M)})$ where $u^{(j)}\in\mathbb{C}^n$ for $1\leq j\leq M$. Then, with overwhelming probability, any choice of $u$ must be such that $u^{(1)}$ is not identically zero and the normalized vector $u^{(1)}/\|u^{(1)}\|_{2}$ does not lie in $\mathrm{Comp}(a,b)$.  Here, sufficient smallness of $\theta$ depends only on $M$, the constant $r_0$ from \eqref{genopbound}, and the distributions of the entries of the factor matrices.  
		\end{lemma}
		\begin{proof}
			Let $\theta, t_0 \in (0,1)$ be a sufficiently small constants to be chosen later.  
			The claim that $u^{(1)}\neq 0$ is trivial, as the linear structure of $Y(z)$ and orthogonality assumption on $u$ would then imply that $X^{(1)}$ is noninvertible, which can be ruled out with exponentially high probability \cite{V}; alternatively one could also use Lemma \ref{conkboundlem1}.  
			
			It therefore suffices to show that $u^{(1)}/\|u^{(1)}\|_{2}$ is not compressible.  We will assume $M > 1$; in fact, the $M=1$ case can be deduced from Lemma \ref{complemma2}.  
			Let $v=(v^{(1)},\ldots,v^{(M)})$ be some nonzero vector in $\mathbb{C}^{Mn}$ with each $v^{(i)}\in\mathbb{C}^n$ and $\|v^{(1)}\|_{2} = 1$.  We will use a net argument to show that, with overwhelming probability, $v$ cannot be orthogonal to the first $Mn-1$ rows of $Y(z)$ if $v^{(1)} \in \mathrm{Comp}(a,b)$.  
			
			If $v$ is orthogonal to the first $Mn-1$ rows of $Y(z)$, the block structure which results from linearization implies that 
			\begin{equation} \label{eq:recident1}
				v^{(1)} = \frac{1}{z^{M-1}} X^{(1)} \cdots X^{(M-1)} v^{(M)} 
			\end{equation}
			and
			\begin{equation} \label{eq:recident2}
				\tilde{I} v^{(M)} = \frac{1}{z} \tilde{X}^{(M)} v^{(1)}, 
			\end{equation}
			where $\tilde{I}$ is the $(n-1) \times n$ matrix formed from the $n \times n$ identity matrix by removing the last row; these identities also appeared in the proof of Lemma \ref{conkboundlem1}.  In particular, since $\|v^{(1)} \|_2 = 1$, with exponentially high probability, we use \eqref{genopbound} and \eqref{eq:recident1} to deduce that
			\[ 1 \leq \left( \frac{ (1 + r_0) n^{1/2} }{ n^{1/2 - t_0} } \right)^{M-1} \|v^{(M)} \|_2 \]
			and hence
			\begin{equation} \label{eq:bndCr0}
				\|v^{(M)} \|_2 \geq C_{r_0} n^{-(M-1) t_0} 
			\end{equation}
			for some constant $C_{r_0} > 0$ which depends only on $r_0$ and $M$.  
			
			Inductively repeating the argument from the proof of Lemma \ref{singvect} (using  Lemma \ref{smallballprob} to control the size of each coordinate and then applying the Chernoff bound), we have
			\[ \mathbf{P} \left\{ \| z^{M-1} y - X^{(1)} \cdots X^{(M-1)} x \|_2 \leq c_1 n^{(M-1)/2 - 3(M-1) t_0} \right\} = O(\exp(-cn) ) \]
			for any fixed vectors $x,y$ with $\|x\|_2 \geq \frac{C_{r_0}}{2} n^{-3(M-1) t_0}$.  
						
			Let $a$ and $b$ be as in the statement of the lemma, and let $\Omega_{a,b}$ be a $3b$-net of $\mathrm{Comp}(a,b)$, the set of compressible unit vectors in $\mathbb{C}^n$.  By Lemma \ref{cordbound}, $\Omega_{a,b}$ can be chosen so that
			\[ |\Omega_{a,b}| \leq \exp \left( 2an \log \left( \frac{e C}{ab} \right) \right) = \exp \left( o(n) + 200 n (M-1) t_0 \right). \]
			Let $\mathcal{N}$ be a $b$-net of $\{z \in \mathbb{C} : |z| \leq n^{2} \}$.  A simple volume argument shows that $\mathcal{N}$ can be chosen so that 
			\[ l := |\mathcal{N}| \leq O(n^{O_M(1)}). \]
			Let $\omega_1, \ldots, \omega_l$ be an enumeration of the elements in $\mathcal{N}$.  
			To each $y \in \Omega_{a,b}$, we associate the vectors $x_{y,1}, \ldots, x_{y,l}$ such that
			\[ \tilde{I} x_{y,k} = \frac{1}{z} \tilde{X}^{(M)} y \]
			for each $1 \leq k \leq l$, and the last coordinate of $x_{y,k}$ is given by $\omega_k$.  The vectors $x_{y,1}, \ldots, x_{y,l}$ are random, but only depend on $\tilde{X}^{(M)}$.  In particular, these vectors are independent of $X^{(1)}, \ldots, X^{(M-1)}$.  For notational simplicity, define 
			\[\Omega_{a,b}' = \left\{ (y, x_{y,k}) : y \in \Omega_{a,b}, 1 \leq k \leq l, \|x_{y,k} \|_2 \geq \frac{C_{r_0}}{2} n^{-3(M-1)t_0} \right\}, \]  
			so 
			\[ |\Omega_{a,b}'| \ll \exp \left( o(n) + 200 n (M-1) t_0 \right). \]
			Taking $t_0$ sufficiently small and applying the union bound, we conclude that
			\begin{equation} \label{eq:unionbnd'}
				\mathbf{P} \left\{ \min_{(y,x) \in \Omega_{a,b}'} \| z^{M-1} y - X^{(1)} \cdots X^{(M-1)} x \|_2 \leq c_1 n^{(M-1)/2 - 3(M-1) t_0} \right\} = O(\exp(-c'n)). 
			\end{equation}
			Here, we have exploited the fact that $\Omega_{a,b}'$ depends only on $\tilde{X}^{(M)}$ and is independent of $X^{(1)}, \ldots, X^{(M-1)}$, while the probability above is only in terms of $X^{(1)}, \ldots, X^{(M-1)}$.  In particular, the probability bound above holds uniformly for any realization of $\tilde{X}^{(M)}$.  
			
			Now suppose $v = (v^{(1)}, \ldots, v^{(M)}) \in \mathbb{C}^{Mn}$ is normal to the first $Mn-1$ rows of $Y(z)$ with $\|v^{(1)} \|_2 = 1$, $v^{(1)} \in \mathrm{Comp}(a,b)$, and $C_{r_0} n^{-(M-1)t_0} \leq \|v^{(M)} \|_2 \leq n$.  Then clearly $v^{(1)}$ and $v^{(M)}$ must satisfy \eqref{eq:recident1} and \eqref{eq:recident2}.  In addition, there exists $y' \in \Omega_{a,b}$ such that $\|v^{(1)} - y'\|_2 \leq 3b$.  By \eqref{eq:recident2}, with exponentially high probability and for all $1 \leq k \leq l$, 
			\[ \| \tilde{I} v^{(M)} - \tilde{I} x_{y', k} \|_2 \leq 3 (1+r_0) n^{t_0} b. \]
			By the assumption that $\|v^{(M)} \|_2 \leq n$, the last coordinate of $v^{(M)}$ cannot be larger than $n$.  Thus, there exists $k$ such that $\|v^{(M)} - x_{y',k} \|_2 \leq 5 ( 1 + r_0) n^{ t_0} b$.  Taking $\theta$ sufficiently small (in particular, choosing $\sqrt{\theta}$ small enough), we obtain
			\[ \|v^{(M)} - x_{y',k} \|_2 < \frac{C_{r_0}}{2} \sqrt{\theta} n^{-99 (M-1) t_0}. \]
			In particular, by the lower bound assumption on $\| v^{(M)} \|_2$, this implies that $\|x_{y',k} \|_2 \geq \frac{C_{r_0}}{2} n^{-3 (M-1) t_0}$.  Hence, $(y', x_{y',k}) \in \Omega_{a,b}'$.  Applying \eqref{eq:recident1}, \eqref{genopbound}, and taking $\theta$ sufficiently small, we conclude that
			\begin{align*}
				\| z^{M-1} y' - X^{(1)} \cdots X^{(M-1)} x_{y',k} \|_2 &\leq |z|^{M-1} 3 b + [(1 + r_0) \sqrt{n}]^{M-1} \frac{C_{r_0}}{2} \sqrt{\theta} n^{-99 (M-1) t_0} \\
				&< c_1 n^{(M-1)/2 - 3 (M-1) t_0}
			\end{align*}
			with exponentially high probability.  Comparing to \eqref{eq:unionbnd'}, we obtain
			\begin{align*}
				\mathbf{P} &\left\{ \exists v \text{ orthogonal to rows of } \tilde{Y}(z) \text { with } v^{(1)} \in \mathrm{Comp}(a,b), C_{r_0} n^{-(M-1)t_0} \leq \|v^{(M)} \|_2 \leq n \right\} \\
				&\qquad\qquad= O(\exp(-c''n )). 
			\end{align*}
			
			To complete the proof, it remains to show, with overwhelming probability, that every vector $v$ orthogonal to the first $Mn-1$ rows of $Y(z)$ satisfies $C_{r_0} n^{-(M-1)t_0} \leq \|v^{(M)} \|_2 \leq n$.  Indeed, the lower bound follows, with exponentially high probability, due to \eqref{eq:bndCr0}.  We now prove the upper bound holds with overwhelming probability.  Indeed, in view of \eqref{eq:recident1} and \eqref{eq:recident2}, $v^{(M)}$ must satisfy
			\[ \left( \tilde{I} - \frac{1}{z^{M}} \tilde{X}^{(M)} X^{(1)} \cdots X^{(M-1)} \right) \frac{v^{(M)}}{\|v^{(M)} \|_2} = 0. \]
			(Note that $v^{(M)}$ cannot be zero by \eqref{eq:recident1} since $v^{(1)}$ is assumed to be a unit vector.)  
			Taking $t_0$ sufficiently small and applying Lemma \ref{approxsmallsig}, we find that, with overwhelming probability, $v^{(M)} / \|v^{(M)} \|_2 \not\in V[16 M t_0]$.  Hence, by definition of $V[16 M t_0]$ and \eqref{eq:recident1}, 
			\[ \left\| \frac{1}{z^{M-1}} X^{(1)} \cdots X^{(M-1)} \frac{v^{(M)} }{\|v^{(M)} \|_2} \right\|_2 = \frac{ \|v^{(1)} \|_2}{ \| v^{(M)} \|_2 } \geq n^{-16 M t_0}, \]
			which by rearranging (and since $\|v^{(1)} \|_2 = 1$) yields
			\[ \| v^{(M)} \|_2 \leq n^{16 M t_0} \leq n \]
			for $t_0$ sufficiently small.  
			The proof of the lemma is complete.  
			\end{proof}

		\subsection{The Incompressible Case, Part II}
		Let $\theta > 0$ be a sufficiently small constant satisfying Lemma \ref{nocompcomponent}.  We now prove the following result for incompressible vectors.  
		\begin{lemma}
			\label{decomplem}
			For any sufficiently small constant $A >0$, there exists a constant $t_0 > 0$ such that the following holds for $a = 1/\log n$ and $b = \theta n^{-100(M-1)t_0}$.  
			Suppose that $z$ lies in the following annulus:
			\begin{align}
				n^{1/2-t_0}\leq |z|\leq n^{1/2+t_0}.
			\end{align} 
			Then the following probability bound holds:
			\begin{align*}
				\mathbf{P}\left\{\min_{v\in \emph{Incomp}(a,b)}\|Y(z)v\|_{2}\leq
				c n^{-1/2-A}
				\right\}
				\leq O\left((\log n)n^{-KA}\right).
			\end{align*}
			Here $c$ and $K$ are positive constants depending only on the atom distributions of the factor matrices and on $M$ (as does in the implied constant on the right hand side of the probability estimate).
		\end{lemma}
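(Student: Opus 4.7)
My plan is to use the ``invertibility via distance'' approach applied to the columns of $Y(z)$, combined with the structural lemmas \ref{nocompcomponent} and \ref{conkboundlem1} (together with their cyclic analogues for the transpose). Set $N = Mn$, let $c_1,\ldots,c_N$ denote the columns of $Y(z)$, and for each $j$ write $d_j = \mathrm{dist}(c_j,\mathrm{span}(c_i:i\neq j))$ and let $\Phi_j$ be a unit normal to that span, so that $d_j = |\langle c_j,\Phi_j\rangle|$. The starting inequality $\|Y(z)v\|_2 \geq |v_j|\, d_j$ (valid for any $v \in \mathbb{C}^{Mn}$ and any $j$) is paired with the standard spread lemma: for $v \in \emph{Incomp}(a,b)$ there is a set $J(v) \subseteq [N]$ with $|J(v)| \geq aN/2 = N/(2\log n)$ on which $|v_j| \geq b/\sqrt{2N}$.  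Consequently the bad event forces $\#\{j : d_j \leq \delta\} \geq N/(2\log n)$, where $\delta := c n^{-1/2-A}\sqrt{2N}/b = O(n^{-A + 100(M-1)t_0})$, and Markov's inequality bounds the probability of the bad event by $(2\log n /M)\,\max_j \mathbf{P}(d_j \leq \delta)$.

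It remains to estimate $\mathbf{P}(d_j \leq \delta)$ uniformly in $j$.  The block structure of $Y(z)$ ensures that, for $j$ located in block $p$ at position $l$, the random entries of $c_j$ come from column $l$ of a single neighbouring factor matrix $X^{(p')}$ (with $p' = p-1$ cyclically), and that $\Phi_j$ is independent of that column.  Writing $\Phi_j = (\Phi_j^{(1)},\ldots,\Phi_j^{(M)})$ in block form one obtains
\[d_j = \left| -z\, (\Phi_j^{(p)})_l + \sum_k (\Phi_j^{(p')})_k\, X^{(p')}_{k,l} \right|\]
(up to conjugation, which does not affect the modulus).  By the cyclic symmetry of $Y(z)$, applying Lemmas \ref{nocompcomponent} and \ref{conkboundlem1} to $Y(z)^*$ (which has the same linearized form with the factor matrices cyclically permuted and conjugated) yields, with overwhelming probability and uniformly over $j$, the bounds $\Phi_j^{(p')}/\|\Phi_j^{(p')}\| \in \emph{Incomp}(a,b)$ and $\|\Phi_j^{(p')}\| \geq n^{-2\epsilon_0}$.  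Conditionally on this structural event and on all of the entries other than column $l$ of $X^{(p')}$, the expression above is a deterministic shift plus a sum of i.i.d.\ subgaussians with incompressible unit coefficient vector scaled by $\|\Phi_j^{(p')}\|$.  An Esseen-type anti-concentration inequality (valid at scales $\delta \gtrsim n^{-1/2}$, which we arrange by taking $A$ sufficiently small) then gives $\mathbf{P}(d_j \leq \delta) \leq C\delta/\|\Phi_j^{(p')}\| \leq C\delta\cdot n^{2\epsilon_0}$.  Substituting this into the Markov step produces the bound $O(\log n \cdot n^{-A + 100(M-1)t_0 + 2\epsilon_0})$, which is of the desired form $O(\log n \cdot n^{-KA})$ once $t_0$ and $\epsilon_0$ are chosen small enough relative to $A$.

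The main obstacle I expect is the careful bookkeeping needed to transfer the incompressibility and non-triviality bounds from the ``last-row'' setup of Lemmas \ref{nocompcomponent} and \ref{conkboundlem1} to the column-normals $\Phi_j$ for arbitrary $j$, uniformly in $j$: one must exploit the cyclic block symmetry, track the independence between $\Phi_j$ and the random piece of $c_j$, and absorb a polynomial union bound over $j$, which is harmless at the overwhelming-probability level.  A secondary subtlety is ensuring that the anti-concentration threshold $n^{-1/2}$ lies below $\delta$; this is the origin of the smallness constraint on $A$ (and hence on $t_0$) appearing in the statement.
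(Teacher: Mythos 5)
Your proof follows the same three-step skeleton as the paper's: reduce the incompressible minimum to row/column-to-hyperplane distances, control the relevant block of the unit normal via Lemmas \ref{nocompcomponent} and \ref{conkboundlem1}, and anti-concentrate the resulting inner product. Working with columns rather than rows and inlining the Markov derivation of the Rudelson--Vershynin distance bound are cosmetic changes, and the cyclic-permutation argument you sketch to transfer the structural lemmas to an arbitrary column is exactly the ``purely notational'' adaptation the paper invokes to pass from the last row to a general row. The only place you are slightly loose is the anti-concentration step: for a merely $(a,b)$-incompressible coefficient vector with $b$ polynomially small, an Esseen-type bound carries a constant of order $1/(b\sqrt{a})$ rather than an absolute one; this extra factor is of size $n^{O(t_0)}\sqrt{\log n}$ and hence absorbable for small $t_0$, so it is an imprecision rather than a gap, but the paper handles it more cleanly by invoking Lemma 6.7 of \cite{TV3}, which characterizes the high-concentration level set $S_{\epsilon,\rho}$ as compressible and thereby lets the anti-concentration scale $\epsilon$ and level $\rho$ be tuned independently (with $\epsilon$ proportional to $b\rho$) rather than aiming for a linear-in-$\delta$ Esseen estimate directly.
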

			
		The argument presented here will in large part follow along the same lines as the argument in \cite{TV3}, and will be organized around the following result of Rudelson and Vershynin \cite{RVc}:
		
		\begin{lemma}
			\label{keyincomp}
			Let $\emph{dist}_k$ denote the distance between the $k$-th row of $Y(z)$ and the hypersurface spanned by the other $Mn-1$ rows. Then the following estimate holds for any constants $1 > a,b>0$:
			\begin{align}
				\label{rudverdist}
				\mathbf{P}\left\{\min_{v\in \emph{Incomp}(a,b)}\|Y(z)v\|_{2}\leq
				\epsilon' b (Mn)^{-1/2}
				\right\}\leq \frac{1}{aMn}\sum_{k=1}^{Mn} \mathbf{P}\left\{
				\emph{dist}_k\leq \epsilon'
				\right\},
			\end{align}
			where $\epsilon'>0$ is any arbitrary positive constant.
		\end{lemma}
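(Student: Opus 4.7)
The plan is to execute the standard Rudelson--Vershynin reduction from the least singular value (restricted to incompressible directions) to a sum of per-row distance probabilities. There are two ingredients: a ``spreadness'' property of incompressible vectors, and a geometric inequality relating $\|Y(z)^T u\|_2$ to the row-distances $\mathrm{dist}_k$.

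The spreadness property states that for every $v \in \mathrm{Incomp}(a,b)$, the set $L(v) := \{k \in [Mn] : |v_k| \geq b/\sqrt{Mn}\}$ has cardinality at least $aMn/2$. I would prove this by contraposition: if $|L(v)| < aMn/2$, then $\|v|_{L(v)^c}\|_2^2 \leq Mn \cdot (b/\sqrt{Mn})^2 = b^2$, so $v$ lies within $\ell^2$-distance $b$ of the sparse vector $v|_{L(v)}$, which is supported on fewer than $aMn$ coordinates; this exhibits $v \in \mathrm{Comp}(a,b)$ and contradicts incompressibility.

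Next, I would establish the geometric inequality
\[ \|Y(z)^T u\|_2 \;\geq\; |u_k|\cdot \mathrm{dist}_k \]
for every $u \in \mathbb{C}^{Mn}$ and every $k$. This is immediate from the identity $Y(z)^T u = \sum_{j} u_j R_j$, where $R_j$ is the $j$-th row of $Y(z)$: rearranging gives $R_k = -u_k^{-1}\sum_{j \neq k} u_j R_j + u_k^{-1}(Y(z)^T u)$, which exhibits $R_k$ as lying within distance $|u_k|^{-1} \|Y(z)^T u\|_2$ of the span of the other rows. Since $Y(z)$ is square, $\sigma_1(Y(z)) = \sigma_1(Y(z)^T)$, so stating the lemma in terms of $\|Y(z)^T v\|_2$ is equivalent to stating it in terms of $\|Y(z)v\|_2$ for the purposes of bounding the minimum singular value, and one works with $Y(z)^T$ throughout.

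Combining the two ingredients finishes the proof. On the event
\[ B := \Bigl\{\min_{v \in \mathrm{Incomp}(a,b)} \|Y(z)^T v\|_2 \leq \epsilon' b (Mn)^{-1/2}\Bigr\}, \]
fix a minimizer $v^*$. For each $k \in L(v^*)$, the geometric inequality forces $\mathrm{dist}_k \leq \|Y(z)^T v^*\|_2/|v_k^*| \leq \epsilon'$, and since $|L(v^*)| \geq aMn/2$, the counting function $N := \#\{k : \mathrm{dist}_k \leq \epsilon'\}$ satisfies $N \geq aMn/2$ on $B$. Taking expectations gives
\[ \tfrac{aMn}{2}\,\mathbf{P}(B) \;\leq\; \mathbf{E}[N] \;=\; \sum_{k=1}^{Mn} \mathbf{P}(\mathrm{dist}_k \leq \epsilon'), \]
which rearranges to the claimed bound. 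The main technical point is setting up the geometric inequality with the correct convention --- the naive projection argument delivers column-distances rather than row-distances, and the identification with the row-distances $\mathrm{dist}_k$ is what forces passage to $Y(z)^T$; the factor $2$ in $2/(aMn)$ traces back directly to the factor $1/2$ in the spreadness bound $|L(v)| \geq aMn/2$, and everything else is a short Markov averaging.
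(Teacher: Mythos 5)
The paper does not supply its own proof of this lemma; it is quoted directly from Rudelson and Vershynin \cite{RVc}. Your proposal reconstructs exactly the Rudelson--Vershynin reduction (spreadness of incompressible vectors, the per-coordinate distance bound, and Markov averaging over the event), so on substance your argument is correct and is the intended one.

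One point deserves more care than your closing sentence gives it. Writing $R_1,\dots,R_{Mn}$ for the rows of $Y(z)$, the identity $Y(z)^{T}u=\sum_j u_j R_j^{T}$ yields $\|Y(z)^{T}u\|_2\geq |u_k|\,\mathrm{dist}_k$ for the \emph{row} distances, whereas the analogous manipulation for $Y(z)u=\sum_j u_j C_j$ (columns) gives the \emph{column} distances. So the argument you give proves the stated probability bound with $\|Y(z)^{T}v\|_2$ in place of $\|Y(z)v\|_2$, and the two quantities $\min_{v\in\mathrm{Incomp}(a,b)}\|Y(z)v\|_2$ and $\min_{v\in\mathrm{Incomp}(a,b)}\|Y(z)^{T}v\|_2$ are not literally equal -- your appeal to $\sigma_1(Y)=\sigma_1(Y^{T})$ only identifies the unrestricted minima. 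The clean way to close the gap is to observe that $Y(z)^{T}$ is itself the (row-permuted) linearization of the transposed factor matrices $(X^{(M)})^{T},\dots,(X^{(1)})^{T}$, which have the same distribution as the original factors, so the two bounds are exchangeable in distribution; alternatively, one can read the lemma with column distances throughout (which is how Rudelson--Vershynin state it), since the paper only ever uses it to control $\sigma_1(Y(z))$. Either fix is a one-line remark, but it should be made explicit.

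Two small cosmetic notes: your contraposition actually shows $|L(v)|>aMn$ rather than merely $|L(v)|\geq aMn/2$, so the extra factor of $2$ is slack you did not need; and $v\restriction_{L(v)}$ is supported on fewer than $aMn/2$ coordinates, not merely fewer than $aMn$, under your hypothesis. Neither affects correctness since the stated constant $2/(aMn)$ is comfortably achieved.
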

		
		\begin{proof}[Proof of Lemma \ref{decomplem}]
		Our task is now  to estimate $\mbox{dist}_k$, which can be formulated as the magnitude of an inner product. It is sufficient to establish the result for the very last row of $Y(z)$, the changes required to establish the result for the other rows being purely notational. Make the following definitions, for constants $c_1,c_2>0$ to be determined later:
		\begin{align*}
			\epsilon={c_1}{b}n^{-A}, \qquad
			\rho=c_2n^{-A}.
		\end{align*}
		Define also $S_{\epsilon,\rho}$ to be the set of unit vectors $v\in\mathbb{C}^n$ satisfying the following inequality (where $X$ is a random vector with the same distribution as a row of $X^{(M)}$):
		\begin{align}
			\sup_{\xi\in\mathbb{C}} \left[\mathbf{P}\left(
			|X \cdot v -\xi|\leq \epsilon
			\right)\right]\geq \rho.
		\end{align}
		By Lemma 6.7 in \cite{TV3}, we have that for any $t_0, \theta > 0$, for $n$ large enough, and $A < 1/2$ (and choosing $c_1$ and $c_2$ appropriately):
		\begin{align*}
			S_{\epsilon,\rho}\subset \mbox{Comp}\left(
			O\left(\frac{1}{n\rho^2}\right), O\left(\frac{\epsilon}{\rho}\right)
			\right)
			\subset \mbox{Comp}\left(
			a,b
			\right).
		\end{align*}
		Let $\Phi$ be an $Mn$-dimensional vector $\Phi=(\Phi^{(1)},\ldots,\Phi^{(M)})$ (not necessarily of unit length) orthogonal to first $Mn-1$ rows of $Y(z)$, with each $\Phi^{(i)}\in\mathbb{C}^n$ and with $\|\Phi^{(1)}\|_{2}=1$.  Notice that $\Phi$ can be chosen to depend only on the first $Mn-1$ rows of $Y(z)$ and is independent of the last row of $X^{(M)}$.  By Lemma \ref{nocompcomponent}, with overwhelming probability (with respect to $X^{(1)}, \ldots, X^{(M-1)}$), we have that such a normal vector exists and $\Phi^{(1)}$ is not in $S_{\epsilon,\rho}$.  This implies that 
		\begin{align}
			\mathbf{P}\left\{
			\left\|X^{(M)}_{n}\Phi^{(1)}-z\Phi^{(M)} \right\|_{2}\leq \epsilon \right\}\leq \rho.  
		\end{align}
		Here, since $\Phi$ is orthogonal to the first $Mn-1$ rows of $Y(z)$, $\left\|X^{(M)}_{n}\Phi^{(1)}-z\Phi^{(M)} \right\|_{2}$ is simply the magnitude of the dot product of $\Phi$ with the last row of $Y(z)$.  For $A$ and $t_0$ sufficiently small, Lemma \ref{conkboundlem1}, with overwhelming probability, guarantees that 
	 $\left\|\left(\Phi^{(1)}/\|\Phi\|_{2}
	 \right)\right\|_{2}\geq n^{-A} $.  In other words, (since $\| \Phi^{(1)} \|_2 = 1$), $\| \Phi \|_2 \leq n^{A}$.  Rescaling, we obtain 
		\begin{align}
		\mathbf{P}\left\{
		\mbox{dist}_{Mn} \leq \epsilon n^{-A}
		\right\} \leq \mathbf{P} \left\{ \left\|X^{(M)}_{n}\Phi^{(1)}-z\Phi^{(M)} \right\|_{2} \leq \epsilon n^{-A} \| \Phi \|_2 \right\} \leq O\left(\rho \right). 
		\end{align}
		Applying Lemma \ref{keyincomp}, we conclude that
		\begin{align*}
			\mathbf{P}\left\{\min_{v\in \mathrm{Incomp}(a,b)}\|Y(z)v\|_{2}\leq
			c_1' \theta^2 n^{-2A - 1/2 - 200(M-1) t_0} 
			\right\}\leq O((\log n) \rho),
		\end{align*}
		where $c_1' = c_1 / \sqrt{M}$.  Taking $t_0$ sufficiently small (in terms of $A$ and $M$), we deduce that
		\[ \mathbf{P}\left\{\min_{v\in \mathrm{Incomp}(a,b)}\|Y(z)v\|_{2}\leq
			c_1' \theta^2 n^{-3A - 1/2 }
			\right\}\leq O((\log n) \rho).\]
		Since this is true for any sufficiently small positive constant $A$, replacing $A$ by $A/3$ completes the proof of Lemma \ref{decomplem}.  
		\end{proof}

		The proof of Theorem \ref{linsmallsig} can now be completed by combining Lemma \ref{complemma} with Lemma \ref{decomplem} and bounding the lower order terms (such as bounding $\log n$ above by $n^{KA/2}$ for $n$ sufficiently large).   
		We now turn to the proof of Corollary \ref{smallsigvalcor}.  From the block structure of $Y(z)$, one can compute each $n \times n$ block of the inverse $Y(z)^{-1}$.  Indeed, the top-left $n \times n$ minor of $Y(z)^{-1}$ is given by 
		\begin{align*}
		\left(\frac{1}{z^{M-1}}X_1\dotsm X_M-z\right)^{-1}.
		\end{align*}
		By Theorem \ref{linsmallsig}:
		\begin{align*}
		\mathbf{P}\left\{ 
		\|Y(z)^{-1}\|_{op} \geq n^{1/2+A}
		\right\}
		\leq C n^{-KA},
		\end{align*}
		and consequently (using the fact that the operator norm of a matrix bounds the operator norm of any sub-matrix):
		\begin{align*}
			\mathbf{P}\left\{ 
			 \left\| 
			\left(\frac{1}{z^{M-1}}X_1\dotsm X_M-z\right)^{-1}
			\right\|_{op} \geq n^{1/2+A}
			\right\}
			\leq C n^{-KA}.
		\end{align*}
		We conclude that with probability at least $1-C n^{-KA}$ the smallest singular value of the translated product matrix $X_1\dotsm X_M-z^M$ is no smaller than $|z|^{M-1}n^{-(1/2+A)}$. This establishes Corollary \ref{smallsigvalcor}.

		\section{Linear Statistics of Product Matrices}
		\label{linstat_sec}
		
		The main result of this section will be a central limit theorem for linear statistics of products of independent complex Ginibre matrices.  Recall that $\mathbb{U}$ represents the unit disk in the complex plane centered at the origin.  
		
		\begin{theorem}
			\label{cltforginibre}
			Let $f:\mathbb{C}\to\mathbb{R}$ be a function with continuous partial derivatives and at most polynomial growth at infinity. Additionally, let $\lambda_1,\ldots,\lambda_n$ denote the eigenvalues of the matrix product $n^{-M/2}X^{(1)}\dotsm X^{(M)}$, where each $X^{(i)}$ is an independent $n \times n$ complex Ginibre matrix, and let $N_n[f]$ denote the associated  centered linear statistic: $N_n[f]=\sum_{j=1}^n f(\lambda_j)-\mathbf{E}\left[\sum_{j=1}^n f(\lambda_j) \right]$.
			Then $N_n[f]$ converges in distribution to the mean zero normal distribution with the following limiting variance:
			\begin{align}
				\frac{1}{4\pi}\int_{\mathbb{U}}\left|\nabla f(z)\right| d^2z
				+\frac{1}{2}\|f\|^2_{H^{1/2}(\mathbb{\partial U})}.
			\end{align}
			\end{theorem}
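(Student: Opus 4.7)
The plan is to exploit the determinantal structure of the eigenvalue process and adapt the rotary flow argument of Rider and Virág \cite{RV}, which was developed for the case $M=1$. For fixed $M \geq 1$, the joint law of the eigenvalues $\lambda_1, \dots, \lambda_n$ of $n^{-M/2} X^{(1)} \dotsm X^{(M)}$ is a determinantal point process with the explicit kernel $K_n^{(M)}$ (suitably rescaled) featured in the introduction; this kernel is rotationally invariant in the sense that $|K_n^{(M)}(z,w)|$ depends on the arguments of $z,w$ only through their difference. This symmetry is the structural feature that drives everything.

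The first step is a variance calculation. Using the standard determinantal identity
\[
\Var(N_n[f]) = \frac{1}{2} \iint_{\mathbb{C}^2} (f(z)-f(w))^2 \bigl|K_n^{(M)}(z,w)\bigr|^2\, d^2z\, d^2w,
\]
I expand $f(re^{\sqrt{-1}\theta}) = \sum_{k \in \mathbb{Z}} f_k(r) e^{\sqrt{-1}k\theta}$ into angular Fourier modes. The rotational symmetry of $|K_n^{(M)}|$ forces the modes to decouple, and the variance becomes a sum of one-dimensional integrals, one per Fourier index $k$, each involving $f_k$ and a radial piece of the kernel built from the partial sum $\sum_{j=0}^{n-1} (rs)^j / (\pi j!)^M$ together with the weight $w_M$. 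Laplace / saddle-point asymptotics for this partial sum (it is essentially a truncated power series whose terms concentrate near $j/n \approx (rs)^{M/2}$ after rescaling) combined with the known asymptotics of $w_M$ near $r=1$ show that the $k=0$ mode contributes the radial part of $\frac{1}{4\pi}\int_{\mathbb{U}} |\nabla f|^2$ while each $k \neq 0$ mode contributes both its angular part $\frac{k^2}{4\pi}\int_0^1 |f_k(r)|^2 r^{-1}\, dr$ and the boundary piece $\frac{|k|}{2}|f_k(1)|^2$. Summing in $k$ and recognizing $\sum_k |k||f_k(1)|^2 = \|f\|^2_{H^{1/2}(\partial\mathbb{U})}$ yields the claimed variance.

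To promote the variance computation to a Gaussian limit law, I pass to the cumulants. For a determinantal point process, the $p$-th cumulant of $N_n[f]$ admits an explicit expression as an alternating sum of traces of products of the kernel interlaced with multiplication by $f$. Following the Rider-Virág combinatorial scheme, I expand each such trace in angular Fourier modes; rotational symmetry again forces a linear relation among the mode indices in each term, and the same radial kernel asymptotics as above show that every cumulant of order $p \geq 3$ vanishes in the $n \to \infty$ limit. The method of cumulants then delivers the Gaussian limit, with the polynomial growth of $f$ controlled by the fact that the one-point density $\frac{1}{M\pi}|z|^{2/M-2} \mathbf{1}_{|z|<1}$ is compactly supported and the tails of the eigenvalue process decay superpolynomially.

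The main technical obstacle is the $M$-dependence of the weight: for $M \geq 2$ the density $w_M$ is given by a Meijer G-function rather than a Gaussian, and the orthonormal polynomials are simply monomials $z^k$ whose normalizing constants carry $M$-dependence $(\pi k!)^M$. The delicate point is that although the one-point density $\frac{1}{M\pi}|z|^{2/M-2}$ and the weight $w_M$ both depend nontrivially on $M$, these $M$-dependencies must cancel in $|K_n^{(M)}(z,w)|^2$ on the microscopic scale $|z-w|=O(n^{-1/2})$ that controls the bulk variance, producing an $M$-independent limit. Making this cancellation explicit, via sharp Meijer G-function asymptotics uniform in the spectral parameter, is the core of the Gaussian limit calculation and the source of the ``miraculous'' independence of the answer from $M$.
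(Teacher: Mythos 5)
Your proposal correctly identifies the structural ingredients — determinantal kernel, rotational invariance, the Rider--Vir\'ag cumulant machinery — but the technical route you sketch is genuinely different from the paper's and has a real gap. The paper works with \emph{polynomial} test functions first and exploits the fact that the moments of the weight $w_n$ are computable exactly (via Fubini and polar coordinates, $\int |z|^{2\iota} w_n(n^{M/2}z)\,d^2z = \pi n^{-M\iota}\Gamma(\iota+1)^M$). This reduces both the higher cumulants and the covariance to purely algebraic expressions, and the rotary-flow combinatorial lemmas of \cite{RV} then kill the cumulants of order $\geq 3$ without any asymptotic analysis of the kernel or the weight. The extension to general $C^1$ test functions is done afterward via a variance-convergence argument (Lemma~\ref{firstlimlem}) together with incomplete-gamma tail bounds. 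The entire point of this strategy is that one never needs asymptotics of the Meijer $G$-function; the $M$-dependence enters algebraically through $(\pi k!)^M$ and cancels combinatorially.

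Your plan, by contrast, hinges on Fourier decomposition plus Laplace/saddle-point asymptotics of the truncated series and of the Meijer $G$-weight ``uniform in the spectral parameter,'' and you yourself flag this as the core of the calculation. That step is not fleshed out and is where the argument would need the most care. Two concrete issues: (i) the stated saddle location looks wrong — for terms $x^\iota/(\iota!)^M$ with $x=n^M rs$, Stirling gives the maximizing index at $\iota \approx x^{1/M} = n (rs)^{1/M}$, not $\iota/n \approx (rs)^{M/2}$ as you write, so the ``decoupled'' radial integrals you claim would need to be re-derived; (ii) for the higher cumulants, the Rider--Vir\'ag scheme requires more than a ``linear relation among mode indices'' — one needs the precise cancellations proved in their Lemmas 5, 7, 8, and 9, and it is not clear from your description that the $O(1)$ terms in your asymptotic expansion organize into the form those lemmas can annihilate, nor how you would control the errors uniformly over the surjection sums. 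Finally, the extension from polynomial test functions to functions of polynomial growth is nontrivial — the paper devotes a full subsection and a separate lemma to it — and your appeal to ``tails decay superpolynomially'' does not by itself yield the required variance convergence; one needs the explicit bound $\lim_n \int_{|z|>1+\epsilon}|z|^C K_n(z,\bar z)\,d\mu_n = 0$, which the paper proves via incomplete gamma asymptotics. Your route could in principle be made rigorous, but it trades the paper's exact algebra for harder analysis, and as written it is incomplete at precisely the steps that carry the $M$-dependence.
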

		
		We will follow the approach developed by Rider and Vir\'ag \cite{RV} to prove a similar result for the Ginibre ensemble, which will come down to using rotary flow combinatorial machinery to control the cumulants of the statistic in the case of polynomial test functions, and then producing a variance bound to extend the result to the case of more general test functions. The key property which we are exploiting here is the fact that the Ginibre product matrix, like the Ginibre ensemble itself, is a rotationally invariant determinantal point process.  
		
		Theorem \ref{cltforginibre} will provide us with a base case to which we will then apply four moment universality. To finish off this section, we will also apply this same method to products of independent, truncated unitary matrices. 
		
		\subsection{On Combinatorics and Cumulants}
		
		We first introduce some preliminary combinatorial machinery which we will need to establish central limit theorems for linear statistics of product matrices. The development in this section is taken directly from \cite{RV}, on whose work our approach is modeled. The cumulants $\mathcal{C}_k(\xi)$ of a real-valued random variable $\xi$ are defined by the following formula:
		\begin{align*}
			\log\mathbf{E}[e^{\sqrt{-1}t \xi}]=\sum_{k=1}^\infty \frac{(\sqrt{-1}t)^k}{k!}\mathcal{C}_k(\xi).
		\end{align*}
		
		Define $X(g)=\sum g(z_k)$, where $g$ is some real-valued test function and $z_k$ are random points distributed according to the determinantal point process with kernel $K=K(-,-)$ and associated measure $\mu$ (where $K$ is a self adjoint trace class integral operator on $L^2(\mathbb{C},d\mu)$).  It is well-known that the $k$-th cumulant of $X(g)$ can be expressed according to the following Costin-Lebowitz formula \cite{CL} (see also \cite{Sosh, Sosh2}):
		\begin{align*}
			\mathcal{C}_k(X(g))&=\sum_{m=1}^k \frac{(-1)^m}{m}\sum_{k_1+\cdots +k_m=k} \frac{k!}{k_1!\dotsm k_m!}\\
			&\qquad\times \int_{\mathbf{C}^k}\left(\prod_{\mathit{l}=1}^m g(z_\mathit{l})^{k_\mathit{l}}\right)
			K(z_1,\overline{z}_2)\dotsm K(z_m,\overline{z}_1)d\mu(z_1)\dotsm
			d\mu(z_k).
		\end{align*} 
		
		To take advantage of the rotational invariance, we will need to cast this expression in a more combinatorial light. Following Rider and Vir\'ag, we define:
		\begin{align} \label{def:Phi_m}
			\Phi_{m}(f_1,\ldots,f_m)=
			\int_{\mathbb{C}^m}f_1(z_1)\dotsm f_m(z_m)
			K(z_1,\overline{z}_2)\dotsm K(z_m,\overline{z}_1)d\mu(z_1)\dotsm
			d\mu(z_m).
		\end{align}
		For a function $\sigma: [k] \to [m]$, and $f \in \mathcal{G}^k$, where $\mathcal{G}$ is an algebra of real-valued functions, define $\sigma f \in \mathcal{G}^m$ by
		\begin{equation} \label{def:sigma}
			(\sigma f)_j(z) = \prod_{i : \sigma(i) = j} f_i(z). 
		\end{equation}
		For $f \in \mathcal{G}$, we will occasionally abuse notation and write $\sigma f$ to denote $\sigma (f, f, \ldots, f)$.  
		
		This allows us to express the $k$-th cumulant as a sum over surjections:
		\begin{align*}
			\mathcal{C}_k(X(g))=\sum_{m=1}^k \frac{(-1)^m}{m}\sum_{\sigma: [k]\twoheadrightarrow [m]}
			\Phi_{m}\left( \sigma g \right).
		\end{align*}
		Here, $\sigma:[k]\twoheadrightarrow[m]$ represents a surjection from the set of $k$ objects to the set of $m$ objects. If we assume that $g$ is a polynomial in $z$ and $\bar{z}$, we see by multi-linearity that the $k$-th cumulant is a linear combination of terms of the form:
		\begin{align}
			\mathbb{Y}_{a,b}=
			\sum_{m=1}^k \frac{(-1)^m}{m}\sum_{\sigma: [k]\twoheadrightarrow [m]}
			\Phi_{m}\left( \sigma (z^{a_1} \bar{z}^{b_1}, \ldots, z^{a_k} \bar{z}^{b_k} ) \right).
		\end{align} 
		If we can show that all such summands vanish (either identically or in the limit as $n\to\infty$) for all cumulants after the second, we will be able to conclude that $X(g)$ converges in distribution to a Gaussian random variable. These formulas also provide us with a means to compute the limiting variance.
		
		\subsection{Products of Complex Ginibre Matrices}
		
		In this section, we will quickly collect some basic facts about the products of Ginibre random matrices and sketch our argument. The eigenvalues of the product of $M$ complex independent Ginibre matrices (with entries having variance $1/n$) constitute a determinantal point process with the following kernel \cite{AB2}: 
		\begin{align}
			\label{kerneleq}
			K_n(z,\bar{u}) = \tilde{K}_n(n^{M/2}z,n^{M/2}\bar{u})=
			\sum_{\iota=0}^{n-1}\frac{(z\bar{u})^\iota}
			{h_\iota}.
		\end{align}
		The normalization constants $h_\iota$ are given by:
		\begin{align*}
			h_\iota=\pi n^{-M\iota} \Gamma(\iota+1)^M.
		\end{align*}
		Here, $\Gamma$ denotes the standard gamma function. 
		
		The associated measures $\mu_n$ are given by the products of the Lebesgue measure on the complex plane with the following weight functions:
		\begin{align*}
			w_n(n^{M/2}z)=n^M\pi^{1-M} \int_{\mathbb{C}^M} \delta^2(n^{M/2}z-\xi_M\dotsm\xi_1)
			\prod_{j=1}^M e^{-|\xi_j|^2}d^2\xi_j.
		\end{align*} 
		Here, $\delta^2$ represents the standard delta function on $\mathbb{C}$; the factor of $n^M$ in front is due to the scaling of $z$ in the delta function.  Notice that this measure is rotationally invariant. The weight functions can also be expressed in terms of the Meijer G-function -- see \cite{AB2} for more details. We will need to be able to integrate polynomials against these measures. Applying Fubini's theorem, substituting $n^{M/2}z$, factoring the resulting product of integrals and switching to polar coordinates, we find:
		\begin{align} \label{eq:bimomentwn}
			\int_{\mathbb{C}}|z|^{2\iota}w_n(n^{M/2} z)d^2z
			&=\frac{\pi^{1-M}}{n^{M\iota}}\prod_{j=1}^M
			\left(\int_{\mathbb{C}}
			|\xi_j|^{2\iota} e^{-|\xi_j|^2}d^2\xi_j\right) \\
			&=\frac{\pi}{n^{M\iota}}\Gamma(\iota+1)^M. \nonumber
		\end{align}
		
		We turn now to establishing the asymptotic smallness of higher cumulants. Fixing some arbitrary nonnegative integers $\alpha_1,\ldots,\alpha_m$ and $\beta_1,\ldots,\beta_m$, we can plug the above into the general formula for $\Phi_{m}$ to obtain:
		\begin{align*}
			\Phi_{m}\left(z^{\alpha_1}\bar{z}^{\beta_1},\ldots,
			z^{\alpha_m}\bar{z}^{\beta_m}\right) &=
			\frac{1}{\pi^m}
			\sum_{q_1,\ldots,q_m=0}^{n-1}
			\int_{\mathbb{C}^m}\left(\prod_{\mathit{l}=1}^m 
			z_l^{\alpha_{\mathit{l}}}\bar{z}_l^{
				\beta_{\mathit{l}}}	\right)
			\\
			&\qquad \times
			\left[\prod_{j=1}^m \frac{z_j^{q_j}\bar{z}_{j+1}^{q_j}}
			{n^{-Mq_j} [\Gamma(q_j+1)]^M}
			w(n^{M/2}z_{j})\right] d^2z_1\dotsm d^2z_m,
		\end{align*}
		where we use the convention that $z_{m+1} = z_1$.  
		
		Introduce the notation:
			\begin{align*}
				\eta_j=\sum_{i=1}^j \left(\beta_i-\alpha_i \right).
			\end{align*}
		We will also use $\eta_{max}$ and $\eta_{min}$ to refer to the maximal and minimal values of $\eta_j$, respectively. The following combinatorial lemma, whose proof will be furnished shortly, will be crucial for us.
		\begin{lemma}
		\label{comblemma}
		If the quantity $s=\alpha_1+\cdots +\alpha_m$ is equal to the quantity $\beta_1+\cdots +\beta_m$ for some choice of nonnegative integer vectors $\alpha$ and $\beta$, then the following approximation holds:
		\begin{align*}
			\frac{1}{\pi^m}\int_{\mathbb{C}^m}
			\left(z_1^{\alpha_1}\dotsm z_m^{\alpha_m}\right)
			\overline{\left(z_1^{\beta_1}\dotsm z_m^{\beta_m}\right)}
			\sum_{q_1,\ldots,q_m=0}^{n-1}
			\prod_{j=1}^m \frac{z_j^{q_j}\bar{z}_{j+1}^{q_j}}
			{n^{-Mq_j}\Gamma(q_j+1)^M}
			w_n(n^{M/2}z_j) d^2z_j\\
			=
			\frac{n}{Ms+1}
			-(1+\eta_{max})
			+\frac{1}{2}+\frac{1}{Ms}\sum_{j=1}^m
			\left(\alpha_j\eta_j
			+\frac{\alpha_j(\alpha_j+1)}{2}\right)
			+O\left(n^{-1} \right).
		\end{align*}
		If on the other hand $\alpha_1+\cdots +\alpha_m$ differs from $\beta_1+\cdots +\beta_m$, then the integral in question vanishes identically (by rotational invariance).
		\end{lemma}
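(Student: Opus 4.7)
The plan is to reduce the $m$-fold complex integral to a one-dimensional sum of a polynomial over integers, and then to extract its first two asymptotic orders by Faulhaber's formula. The essential input is that $w_n$ is rotationally invariant.

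Writing $z_j = r_j e^{\sqrt{-1}\theta_j}$ separates the angular and radial integrations. Each angular integration yields a Kronecker delta enforcing $\alpha_j + q_j = \beta_j + q_{j-1}$ for $1\leq j \leq m$, with the cyclic convention $q_0 = q_m$. Summing these telescoping relations around the cycle forces $\sum_i (\beta_i - \alpha_i) = 0$: if this condition fails the integral vanishes identically, proving the second clause of the lemma. When it holds, the system is solved by $q_j = q + \eta_j$ with $q$ a free integer parameter constrained by $-\eta_{min} \leq q \leq n-1-\eta_{max}$. The radial integrals then evaluate, via the moment identity $\int_{\mathbb{C}}|z|^{2k}w_n(n^{M/2}z)\,d^2 z = \pi n^{-Mk}\Gamma(k+1)^M$ derived in the surrounding text, to $\pi n^{-M(\alpha_j + q_j)}\Gamma(\alpha_j + q_j + 1)^M$; multiplying these factors with the prefactors $n^{Mq_j}/[\pi \Gamma(q_j+1)^M]$ and canceling the $\pi^m$ reduces the entire integral to
\begin{align*}
	n^{-Ms}\sum_{q = -\eta_{min}}^{n-1-\eta_{max}} P(q), \qquad P(q) := \prod_{j=1}^m \left(\frac{\Gamma(\alpha_j + q + \eta_j + 1)}{\Gamma(q + \eta_j + 1)}\right)^M.
\end{align*}
Using $\Gamma(a+x+1)/\Gamma(x+1) = \prod_{\ell=1}^a (x + \ell)$, I would observe that $P$ is a monic polynomial in $q$ of degree $Ms$, and a short binomial expansion shows that its coefficient of $q^{Ms - 1}$ equals $M\sum_{j=1}^m \bigl(\alpha_j \eta_j + \alpha_j(\alpha_j+1)/2\bigr)$.

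The final step is Faulhaber's identity $\sum_{q'=0}^{N-1} q'^r = N^{r+1}/(r+1) - N^r/2 + O(N^{r-1})$ applied termwise to $P$ after the index shift $q \mapsto q' = q + \eta_{min}$; this makes the summation run from $0$ to $N-1$ with $N = n - \eta_{max} + \eta_{min}$ and changes the coefficient of $q^{Ms-1}$ in $P$ by $-Ms\eta_{min}$. Expanding $N^{Ms+1}/(Ms+1)$ and $-N^{Ms}/2$ in powers of $n$, and collecting the terms of order $n^{Ms}$, the two $\eta_{min}$ contributions (one from the endpoint shift through $N$, the other from the adjusted sub-leading coefficient of $P$) cancel exactly, leaving an expression depending only on $\eta_{max}$, the coefficient of $q^{Ms-1}$, and the leading contribution $n/(Ms+1)$. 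Dividing by $n^{Ms}$ and regrouping produces the claimed expansion. The main, and essentially only, subtlety is this $\eta_{min}$ cancellation, which is precisely what forces the answer to depend on $\eta_{max}$ alone; once it is in hand, the remaining Euler--Maclaurin bookkeeping and $O(n^{-1})$ remainder estimates are routine.
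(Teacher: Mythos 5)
Your proof is correct and follows essentially the same route as the paper: the rotary-flow constraint $\alpha_j + q_j = \beta_j + q_{j-1}$ obtained from rotational invariance, the reduction of the $m$-fold integral to a one-parameter sum over $q = q_m$ ranging in $[-\eta_{\min},\, n-1-\eta_{\max}]$, the evaluation of the radial integrals via the bi-moment identity to produce the monic degree-$Ms$ polynomial $P(q) = \prod_j \bigl(\Gamma(\alpha_j + q + \eta_j + 1)/\Gamma(q + \eta_j + 1)\bigr)^M$, the identification of its sub-leading coefficient as $M\sum_j \bigl(\alpha_j\eta_j + \alpha_j(\alpha_j+1)/2\bigr)$, and a Faulhaber-type estimate to extract the two leading asymptotic orders. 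The only organizational difference is minor: the paper applies the estimate $\sum_{\ell=c}^{n} \ell^t = n^{t+1}/(t+1) + n^t/2 + O(n^{t-1})$ directly with lower endpoint $c=-\eta_{\min}$ and upper endpoint $n-1-\eta_{\max}$, absorbing the constant-size contribution from the lower endpoint into the error term, so the $\eta_{\min}$-independence of the answer is automatic. You instead re-index so the sum starts at zero, track the induced change in the sub-leading coefficient, and exhibit the $\eta_{\min}$ cancellation explicitly. Both are correct; the paper's bookkeeping is slightly shorter, while yours makes the reason for $\eta_{\min}$-independence more visible. One remark worth flagging: your computation yields the coefficient $\tfrac{1}{s}\sum_j \bigl(\alpha_j\eta_j + \tfrac{\alpha_j(\alpha_j+1)}{2}\bigr)$ (coming from $M\cdot\tfrac{1}{Ms}$), which agrees with the final display in the paper's own proof; the factor $\tfrac{1}{Ms}$ appearing in the \emph{statement} of the lemma is at odds with both computations and appears to be a typographical slip.
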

				
		Setting $\alpha_j = \sum_{i : \sigma(i) = j} a_i$ for each $j$ and some selection of nonnegative integers $a_1,\ldots,a_m$, and $\beta_j= \sum_{i : \sigma(i) = j} b_i$ for each $j$ and some selection of nonnegative integers $b_1,\ldots,b_m$, the non-vanishing condition is equivalent to assuming $a_1+\cdots+a_k$ is equal to $b_1+\cdots+b_k$. We then obtain a tractable formula for $\mathbb{Y}_{a,b}$ by way of Lemma \ref{comblemma}:
		\begin{align*}
			\sum_{m=1}^k \frac{(-1)^m}{m}&\sum_{\sigma: [k]\twoheadrightarrow [m]}
			\Phi_{m}\left( \sigma (z^{a_1} \bar{z}^{b_1}, \ldots, z^{a_k} \bar{z}^{b_k} ) \right)\\
			&=\sum_{m=1}^k\frac{(-1)^m}{m}\sum_{\sigma:[k]\twoheadrightarrow[m]}
			\Bigg[\frac{n}{Ms+1}
			-(1+\eta_{max})
			+\frac{1}{2}\\ 
			&\qquad +\frac{1}{Ms}\sum_{j=1}^m
			\left(\alpha_j\eta_j
			+\frac{\alpha_j(\alpha_j+1)}{2}\right)
			\Bigg]
			+O(1/n).
		\end{align*}  
		(Note that $\alpha_j$ and $\beta_j$ depend on $\sigma$, and so $\eta_j$ and $\eta_{max}$ also depend on $\sigma$.)
		
		This summation is handled by the combinatorial machinery put together by Rider and Vir\'ag. By Lemma 5.1 in \cite{RV}, for $k\geq 3$:
		\begin{align}
			\label{van1}
			\sum_{m=1}^k\frac{(-1)^m}{m}\sum_{\sigma:[k]\twoheadrightarrow[m]}
			\left(\frac{n}{Ms+1}
			-1/2
			\right)=0.
		\end{align}  
		By Lemma 5.5 in \cite{RV} (again assuming that $k\geq 3$):
		\begin{align}
			\label{van2}
			\sum_{m=1}^k\frac{(-1)^m}{m}\sum_{\sigma:[k]\twoheadrightarrow[m]}
			\eta_{max}=0.
		\end{align}  
		On the other hand, by Lemmas 5.3 and 5.4 in \cite{RV}, for $k\geq 3$:
		\begin{align}
			\label{van3}
			\sum_{m=1}^k\frac{(-1)^m}{m}\sum_{\sigma:[k]\twoheadrightarrow[m]}
			\Bigg[\frac{1}{Ms}\sum_{j=1}^m
			\left(\alpha_j\eta_j
			+\frac{\alpha_j(\alpha_j+1)}{2}\right)
			\Bigg]=0.
		\end{align}  
		Putting together (\ref{van1}), (\ref{van2}), and (\ref{van3}), we can conclude that $\mathbb{Y}_{a,b}\to 0$ as $n \to \infty$, and therefore as discussed above we have that all the cumulants after the second vanish asymptotically.
		
		The linear statistic of a real valued polynomial test function therefore converges in distribution to a Gaussian, and all that remains is to compute the limiting variance.
		
		We now provide the proof of Lemma \ref{comblemma}.
		\begin{proof}[Proof of Lemma \ref{comblemma}]
		
		First, consider the following expression:
		\begin{align*}
			\frac{1}{\pi^m}\sum_{q_1,\ldots,q_m=0}^{n-1} \int_{\mathbb{C}^m}\prod_{j=1}^m
			\left[
			n^{Mq_j}\Gamma(q_j+1)^{-M}(z_j)^{\alpha_j}\overline{(z_j)^{\beta_j}} 
			(z_j\overline{z_{j+1}})^{q_j}\right]\\
			\times w_n(n^{M/2}z_1)d^2z_1\ldots w_n(n^{M/2}z_m)d^2z_m.
		\end{align*}
		By rotational invariance of the weight function, the integrals which we are summing vanish unless the following condition (which may be viewed as a rotary flow condition -- see the discussion in \cite{RV}) is satisfied for each index $j$:
		\begin{align*}
			\alpha_j+q_j=\beta_j+q_{j-1}.
		\end{align*}
		Following \cite{RV}, we set:
		\begin{align*}
			\gamma_j&=\beta_j-\alpha_j\\
			\eta_j&=\gamma_1+\cdots +\gamma_j.
		\end{align*}
		Under this new notation, the rotary flow condition which we must meet to have a nonvanishing integral summand is expressed as:
		\begin{align*}
			q_j=q_m+\eta_j\\
			\sum_{j=1}^m \alpha_j =\sum_{j=1}^m \beta_j.
		\end{align*}
		Since the $q_j$ must satisfy $0\leq q_j \leq n-1$ by definition (from the equation for the correlation kernel -- see (\ref{kerneleq})), we have that $q_m$ must satisfy the bounds $-\eta_{min}\leq q_m \leq n-1-\eta_{max}$. Setting $\mathit{l}=q_m$ (which specifics the other values of $q_j$, by the discussion above) and using the fact that we have $2\mathit{l}+2\eta_j-\gamma_j+\alpha_j+\beta_j=
		2(\mathit{l} + \eta_j+\alpha_j)$, our expression is now:
		\begin{align*}
			\frac{1}{\pi^m}\sum_{\mathit{l}=-\eta_{min}}^{n-1-\eta_{max}}
			\prod_{j=1}^m n^{M(\mathit{l}+\eta_j)}\Gamma(\mathit{l} + \eta_j +1)^{-M}
			\times\left(\int_{\mathbb{C}}
			|z|^{2(\mathit{l} + \eta_j+\alpha_j)}
			w_n({n}^{M/2} z)d^2z\right)^M.
		\end{align*}
		
		Plugging in the moments of the weight function (see \eqref{eq:bimomentwn}), this last display can be rewritten as:
		\begin{align*}
			\sum_{\mathit{l}=-\eta_{min}}^{n-1-\eta_{max}}
			\prod_{j=1}^m
			\frac{n^{M(\mathit{l}+\eta_j)}}{n^{M(\mathit{l}+\eta_j+\alpha_j)}}
			\left[\frac{ \Gamma(\alpha_j+(\mathit{l}+\eta_j+1))}
			{\Gamma(\mathit{l}+\eta_j+1)}\right]^M.
		\end{align*}
		We will need to simplify this expression a little further. First, rewriting the gamma function in terms of a factorial:
		\begin{align*}
			\left[\frac{ \Gamma(\mathit{l}+\eta_j+1+\alpha_j)}
			{\Gamma(\mathit{l}+\eta_j+1)}\right]^M
			&=\Big((\mathit{l}+\eta_j+\alpha_j)\dotsm (\mathit{l}+\eta_j+1)
			\Big)^M\\
			&=\Big(\mathit{l}^{\alpha_j}+\mathit{l}^{\alpha_j-1}
			\left(\frac{\alpha_j(\alpha_j+1)}{2}+\alpha_j \eta_j\right)+O\left(\mathit{l}^{\alpha_j-2}\right)
			\Big)^M
			\\
			&=\mathit{l}^{M\alpha_j}+M\mathit{l}^{M\alpha_j-1}
			\left(\alpha_j\eta_j+\frac{\alpha_j(\alpha_j+1)}{2}\right)
			+O\left(\mathit{l}^{M\alpha_j-2} \right).
		\end{align*}
		Taking the product over all $j$ yields:
		\begin{align}
			\label{combo_form}
			\sum_{\mathit{l}=-\eta_{min}}^{n-1-\eta_{max}}
			\frac{1}{n^{Ms}}\left[
			\mathit{l}^{Ms}+M\mathit{l}^{Ms-1}\left(
			\sum_{j=1}^m \alpha_j\eta_j+\frac{\alpha_j(\alpha_j+1)}{2} 
			\right)
			+O\left(\mathit{l}^{Ms-2}\right)\right].
		\end{align}
		
		To simplify this further we will use the fact that, for any constants $c$ and $t$, the following approximation holds (see Section 6 in \cite{RV}):
		\begin{align*}
			\sum_{\mathit{l}=c}^n \mathit{l}^t=\frac{n^{t+1}}{t+1}
			+\frac{n^t}{2}+O\left(n^{t-1}\right).
		\end{align*}
		This allows us to write:
		\begin{align*}
			\frac{1}{n^{Ms}}\sum_{\mathit{l}=-\eta_{\min}}^{n-1-\eta_{\max}}
			\mathit{l}^{Ms}
			=\frac{n}{Ms+1}+\left[\frac{1}{2}-(1+\eta_{max})\right]+O(n^{-1}).
		\end{align*}
		And:
		\begin{align*}
			\frac{1}{n^{Ms}}\sum_{\mathit{l}=-\eta_{\min}}^{n-1-\eta_{\max}}
			\mathit{l}^{Ms-1}
			=\frac{1}{Ms}+O\left(n^{-1}\right).
		\end{align*}
		
		Inserting both of these approximations into (\ref{combo_form}):
		\begin{align*}
			\sum_{q_1,\ldots,q_m=0}^{n-1} \int_{\mathbb{C}^m} \prod_{j=1}^m
			\left[
			n^{Mq_j}\Gamma(q_j+1)^{-M}(z_j)^{\alpha_j}\overline{(z_j)^{\beta_j}} 
			(z_j\overline{z_{j+1}})^{q_j}
			w_n(n^{M/2}z_j)d^2z_j\right]
			\\
			=
			\Bigg[\frac{n}{Ms+1}
			-(1+\eta_{max})
			+\frac{1}{2}+\frac{1}{s}\sum_{j=1}^m
			\left(\alpha_j\eta_j
			+\frac{\alpha_j(\alpha_j+1)}{2}\right)
			+O\left(n^{-1} \right)\Bigg].
		\end{align*}
		This is exactly what we wanted to prove. \end{proof} 
		
		\subsection{Variance for Polynomial Test Functions}
		
		We have now established that the higher cumulants of the polynomial linear statistic associated with the product of Ginibre matrices vanish in the limit, and so our next item of business is the computation of the limiting variance, which will establish the Gaussian limit.
		As in the case of computing the higher cumulant limits, the variance computations closely follow (and often altogether mirror) those in \cite{RV}.
		
		Consider two monomials, let's say $z^{a_1}\bar{z}^{b_1}$ and $z^{a_2}\bar{z}^{b_2}$, with $a_1+a_2=b_1+b_2$ (we will denote this common quantity by $s$, for brevity).  We now compute the covariance of the linear statistics associated with each monomial, $\mbox{Cov}\left(z^{a_1}\bar{z}^{b_1},z^{a_2}\bar{z}^{b_2}\right)$.  Indeed, it follows that 
		\[ \mbox{Cov}(z^{a_1} \bar{z}^{b_1}, z^{a_2} \bar{z}^{b_2}) = \Phi_1(z^{a_1 + a_2} \bar{z}^{b_1 + b_2}) - \Phi_2(z^{a_1} \bar{z}^{b_1}, z^{a_2} \bar{z}^{b_2}). \] 
		Thus, by the proof of Lemma \ref{comblemma}, we can estimate the covariance  $\mbox{Cov}\left(z^{a_1}\bar{z}^{b_1},z^{a_2}\bar{z}^{b_2}\right)$ 
		up to $O(1/n)$ error which we neglect, by the following expression:
		\begin{align*}
			&\Bigg(\frac{n}{Ms+1}
			-1
			+\frac{1}{2}+\frac{1}{s}\frac{(a_1+a_2)(a_1+a_2+1)}{2}
			\Bigg)\\
			&\qquad -\Bigg(\frac{n}{Ms+1}
			-(1+\max(0,b_1-a_1))
			+\frac{1}{2}\\
			&\qquad +\frac{1}{s}
			\left(a_1(b_1-a_1)
			+\frac{a_1(a_1+1)}{2}
			+\frac{a_2(a_2+1)}{2}\right)
			\Bigg).
		\end{align*}
		Simplifying, the covariance is:
		\begin{align*}
			\max(0,b_1-a_1)+\frac{1}{s}
			&\Bigg[\frac{(a_1+a_2)(a_1+a_2+1)}{2}
			-
			\left(a_1(b_1-a_1)
			+\frac{a_1(a_1+1)}{2}
			+\frac{a_2(a_2+1)}{2}\right)\Bigg]\\
			&=\max(0,b_1-a_1))+\frac{1}{s}\left(a_1a_2-a_1(b_1-a_1) \right)\\
			&=\max(0,b_1-a_1)+\frac{a_1b_2}{s}\\
			&=\max(0,a_1-b_1)+\frac{b_1a_2}{s}.
		\end{align*}
		The last equality follows because:
		\begin{align*}
			\max(0,b_1-a_1)-\max(0,a_1-b_1)
			&=\frac{(b_1-a_1)(a_1+a_2)}{s}\\
			&=\frac{b_1(a_2+a_1)-a_1(b_1+b_2)}{s}\\
			&=\frac{b_1a_2}{s}-\frac{a_1b_2}{s}.
		\end{align*}
		On the other hand, we can compute:
		\begin{align*}
			\frac{1}{\pi}\int_{|z|\leq 1}\overline{\partial}(z^{a_1}\bar{z}^{b_1})\times
			\overline{\bar{\partial}(z^{b_2}\bar{z}^{a_2})}d^2z
			&=\frac{b_1a_2}{\pi}\int_{|z|\leq 1} z^{a_1+a_2-1}
			\bar{z}^{b_1+b_2-1}d^2z\\
			&=\delta(a_1+a_2,b_1+b_2)\frac{b_1a_2}{a_1+a_2}.
		\end{align*}
		Similarly:
		\begin{align*}
			\frac{1}{\pi}\int_{|z|\leq 1}\partial(z^{a_1}\bar{z}^{b_1})\times
			\overline{\partial(z^{b_2}\bar{z}^{a_2})}d^2z
			=\delta(a_1+a_2,b_1+b_2)\frac{a_1b_2}{a_1+a_2}.
		\end{align*}
		We may also compute:
		\begin{align*}
			\frac{1}{2}\left<z^{a_1}\bar{z}^{b_1},z^{a_2}\bar{z}^{b_2}\right>_{H^{1/2}}&= \sum_{k>0}\left(k\times(z^{a_1}\bar{z}^{b_1})\wedge (k)\times
			\overline{(z^{b_2}\bar{z}^{a_2})\wedge (k)}\right)\\&=
			\delta(a_1+a_2,b_1+b_2)\times\max(0,a_1-b_1).
		\end{align*}
		Here, $f\wedge(k)$ represents the $k$-th Fourier coefficient with respect to the $H^{1/2}$ Sobolev norm on the unit circle (here we are extending the definition of the Fourier coefficient to include potentially complex-valued functions, in a slight abuse of notation), and $\delta$ denotes the usual Kronecker delta. Letting $f$ be a real valued polynomial, we see that $|\bar{\partial}f|^2=|\partial f|^2=|\nabla f|^2/4$.  Thus, expanding the expression for covariance into terms which involve only integrals of monomials, we see that the second cumulant is given by:
		\begin{align}
			\label{lim_var}
			\frac{1}{4\pi}\int_{|z|\leq 1} |\nabla f|^2 d^2z
			+\frac{1}{2}\|f\|^2_{H^{1/2}(|z|=1)}+o(1), 
		\end{align}
		which gives precisely the correct limiting variance when $n \to \infty$.
		
		\subsection{Variance in the General Case and the Proof of Theorem \ref{cltforginibre}}
		
		We have now established Theorem \ref{cltforginibre} in the case of real-valued polynomial test functions; to complete the proof it remains to extend the result to real valued test functions with continuous partial derivatives and at most polynomial growth at infinity. It will suffice, by the density of polynomials and the arguments in Section 7 of \cite{RV}, to show that the previously obtained formula for the limiting variance in the case of a real valued test function carries over to more general test functions. 
		
		The argument once again follows along the same lines as in \cite{RV}. We begin by showing that
		\begin{equation} \label{eq:growthbound}
			\lim_{n \to \infty} \int_{|z| > 1 + \epsilon} |z|^C K_n(z, \bar{z}) d \mu_n(z) = 0
		\end{equation}
		for any fixed $\epsilon > 0$ and $C > 0$.  In the forthcoming calculations, we will exploit the fact that 
		\begin{equation} \label{eq:gammabnd}
			\frac{1} {\pi} \int_{\mathbb{C}} |\xi|^{2(t-1)} e^{-|\xi|^2} d^2 \xi = \Gamma(t) 
		\end{equation}
		for $t > 0$, 
		which follows by changing to polar coordinates and applying a simple substitution.  We have
		\begin{align*}
			\int_{|z| > 1 + \epsilon} |z|^C K_n(z, \bar{z}) d \mu_n(z) &\leq \sum_{\iota = 0}^{n-1} \int_{|z| > 1 + \epsilon} |z|^C \frac{|z|^{2 \iota}}{\pi \Gamma(\iota + 1)^M} n^{M \iota} w_n(n^{M/2} z) d^2 z \\
			&\leq n \int_{|z| > 1 + \epsilon} |z|^C \frac{|z|^{2 (n-1)}}{\pi \Gamma(n)^M} n^{M (n-1)} w_n(n^{M/2} z) d^2 z \\
			&\leq \frac{n}{\pi^M \Gamma(n)^M} \int_{\mathbb{C}^M} 1_{|\xi_1 \cdots \xi_M| \geq (1+ \epsilon) n^{M/2}}  \left( \prod_{j=1}^M |\xi_j|^{2(n-1) + C} e^{-|\xi_j|^2} d^2 \xi_j \right). 
		\end{align*}
		If $|\xi_1 \cdots \xi_M| \geq (1+ \epsilon) n^{M/2}$, then there exists $j$ such that $|\xi_j| \geq (1 + \epsilon)^{1/M} n^{1/2}$.  Hence,
		\[ 1_{|\xi_1 \cdots \xi_M| \geq (1+ \epsilon) n^{M/2}} \leq \sum_{j=1}^M 1_{|\xi_j| \geq (1 + \epsilon)^{1/M} n^{1/2} }. \]
		Thus, using \eqref{eq:gammabnd} we obtain
		\begin{align*}
			\int_{|z| > 1 + \epsilon} |z|^C K_n(z, \bar{z}) d \mu_n(z) &\ll \frac{n}{\Gamma(n)} \left[ \frac{ \Gamma(n + C/2)}{\Gamma(n)} \right]^{M-1} \int_{|\xi| > (1+ \epsilon)^{1/M} n^{1/2}} |\xi|^{2(n-1) + C} e^{-|\xi|^2} d^2 \xi \\
			&\ll \frac{n^{O(1)}}{\Gamma(n)} \int_{|\xi| > (1+ \epsilon)^{1/M} n^{1/2}} |\xi|^{2(n-1) + C} e^{-|\xi|^2} d^2 \xi. 
		\end{align*}
		By changing to polar coordinates and applying a substitution, we see that
		\[  \frac{1}{\pi} \int_{|\xi| > (1+ \epsilon)^{1/M} n^{1/2}} |\xi|^{2(n-1) + C} e^{-|\xi|^2} d^2 \xi = \int_{(1 + \epsilon)^{2/M} n}^\infty t^{n-1 + C/2} e^{-t} dt \]
		is the upper incomplete gamma function.  Applying standard asymptotic expansions for the upper incomplete gamma function (see, for instance, \cite{Temme1, Temme2}), we conclude that
		\[ \frac{1}{\Gamma(n)} \frac{1}{\pi} \int_{(1+ \epsilon)^{1/M} n^{1/2}}^\infty |\xi|^{2(n-1) + C} e^{-|\xi|^2} d^2 \xi \ll n^{O(1)} \exp(-\Omega_\epsilon(\sqrt{n}) ). \]
		Combining the bounds above, we obtain \eqref{eq:growthbound} 
		for any fixed $C, \epsilon > 0$.

		From the arguments in \cite{RV}, it follows from \eqref{eq:growthbound} and the growth assumption on $f$ that $\Var(N_n[f])$ is asymptotically the same as $\Var(N_n[f \varphi])$), where $\varphi$ is a smooth function taking values in $[0,1]$, which equals $1$ on $\mathbb{U}$ and vanishes outside of $|z| \leq 1 + \epsilon/2$.  Thus, it suffices to assume that the test functions $f$ have continuous partial derivatives and are supported on $|z| \leq 1 + \epsilon/2$.  We work under these assumptions for the remainder of the proof.    
	
		We now turn to computing the limiting variance of $N_n[f]$ when $f$ has continuous partial derivatives and is supported on $|z| \leq 1 + \epsilon/2$.  Begin by making the definition:
		\begin{align*}
		\phi_n(\nu,\eta)&=\int_{\mathbb{C}}\frac{1}{z-\nu}
		\overline{\left(\frac{1}{z-\eta}\right)}K_n(z,\bar{z})d\mu_n(z)\\
		&\qquad -\int_{\mathbb{C}}\int_{\mathbb{C}}
		\frac{1}{z-\nu}
		\overline{\left(\frac{1}{w-\eta}\right)}K_n(z,\bar{w})K_n(w,\bar{z})
		d\mu_n(z)d\mu_n(w).
		\end{align*}
		We next need the following lemma. 
		\begin{lemma}
			\label{firstlimlem}
			Let $f,g$ be real valued test functions with continuous partial derivatives which are supported on $|z| \leq 1 + \epsilon /2$. The following limit holds: 
			\begin{align*}
				\lim_{n\to\infty}\frac{1}{\pi^2}\int_{|\nu|<1}\int_{
					|\eta|<1+
					\epsilon}
				\overline{\partial} f(\nu) \overline{\overline{\partial} g(\eta)}
				\phi_n(\nu,\overline{\eta})d^2\nu d^2\eta
				=\frac{1}{\pi}\int_{\mathbb{U}}
				\overline{\partial} f(\nu) \overline{\overline{\partial} g(\nu)}
				d^2\nu.
			\end{align*}
		\end{lemma}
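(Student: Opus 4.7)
The strategy is to combine the Cauchy--Pompeiu representation of $f, g$ with a direct computation of $\phi_n$ using the orthogonality structure of the kernel $K_n$ against the rotationally invariant weight $\mu_n$. By the standard covariance formula for determinantal processes, $\phi_n(\nu, \bar\eta)$ is the complex covariance of the Cauchy-type linear statistics $\sum_k (z_k-\nu)^{-1}$ and $\sum_k (z_k-\bar\eta)^{-1}$, so that (modulo rapidly-decaying contributions from $|z|>1+\epsilon$, controlled by \eqref{eq:growthbound}) the left-hand side of the lemma reproduces the covariance $\mathrm{Cov}(N_n[f], N_n[g])$ via Cauchy--Pompeiu. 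The right-hand side is the gradient part of the limiting variance formula \eqref{lim_var}, rewritten via the identity $|\nabla f|^2 = 4|\overline{\partial}f|^2$ for real $f$.

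First I would compute $\phi_n$ explicitly. Substituting the Laurent expansions $(z-\nu)^{-1} = z^{-1}\sum_{j\geq 0}(\nu/z)^j$ and $(\bar z - \eta)^{-1} = \bar z^{-1}\sum_{k\geq 0}(\eta/\bar z)^k$ into the single-integral term of $\phi_n(\nu, \bar\eta)$ and using the orthogonality $\int z^j \bar z^k\,d\mu_n(z) = h_j\,\delta_{jk}$ for $0\leq j,k\leq n-1$, rotational invariance annihilates all off-diagonal contributions. Applying the same expansion to the double-integral term, with $K_n(z,\bar w)=\sum_{\iota}(z\bar w)^\iota/h_\iota$, and restricting via orthogonality yields a rotary-flow diagonal sum analogous to the one in Lemma \ref{comblemma}. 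After cancellation between the two terms, $\phi_n(\nu, \bar\eta)$ collapses to an explicit finite sum in a single integer index $\iota \leq n-1$ whose bulk behavior is governed by a geometric series concentrated about the diagonal $\nu=\eta$ inside $\mathbb{U}$, up to $O(1/n)$ corrections of the type handled in Lemma \ref{comblemma} and exponentially small contributions from the truncation at index $n-1$.

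Next I would pass to the limit in the double integral. The kernel $|K_n(z,\bar w)|^2 d\mu_n(z)d\mu_n(w)$ concentrates on a scale $1/\sqrt{n}$ about the diagonal $z=w$ inside $\mathbb{U}$, so that when tested against the smooth densities $\overline{\partial}f(\nu)$ and $\overline{\overline{\partial}g(\eta)}$, the function $\phi_n(\nu, \bar\eta)$ converges to a distribution supported on the diagonal $\nu=\eta$ inside $\mathbb{U}$, producing precisely $\pi^{-1}\int_{\mathbb{U}}\overline{\partial}f(\nu)\,\overline{\overline{\partial}g(\nu)}\,d^2\nu$. The contribution from $|\eta|>1$ is negligible because $\phi_n$ decays rapidly outside $\overline{\mathbb{U}}$, a consequence of \eqref{eq:growthbound}, and dominated convergence is justified by uniform-in-$n$ pointwise bounds on $\phi_n$ throughout the integration region.

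The main obstacle is controlling $\phi_n(\nu, \bar\eta)$ uniformly in the regime $|\nu|, |\eta|\to 1$: here the finite-$n$ truncation at index $n-1$ becomes the dominant contribution, the formal geometric series ceases to be absolutely convergent, and the cancellation between the single- and double-integral terms of $\phi_n$ is delicate. The relevant tail is controlled by ratios of incomplete gamma functions $\Gamma(n+t,n)/\Gamma(n)$, and one must secure decay of order $\exp(-\Omega(\sqrt{n}))$ uniformly on a neighborhood of the circle $|\nu|=|\eta|=1$, using the same asymptotic analysis already employed in the derivation of \eqref{eq:growthbound}. With these boundary estimates in hand, the dominated-convergence interchange and the distributional identification of the limiting kernel are then routine.
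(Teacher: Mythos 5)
Your proposal gets the conceptual shape of the result right---$\phi_n(\nu,\bar\eta)$ acts like $\pi$ times a Dirac mass at $\eta=\nu$---but it skips the single step on which the entire proof actually turns: identifying the constant. The paper devotes nearly the whole argument to establishing $\lim_{n\to\infty}\int_{\mathbb{B}}\phi_n(\nu,\bar\eta)\,d^2\eta=\pi$ for a fixed $\nu$ with $|\nu|=a<1$ and a disc $\mathbb{B}$ of radius $b\in(a,1)$. That computation expands $1/(z-\eta)$ separately in the regimes $|z|<b$ (via Cauchy--Pompeiu) and $|z|>b$ (via a geometric series in $\eta/z$), uses rotational invariance to kill cross terms, and then telescopes the resulting sums against $h_{j+1}$ so that everything cancels except a single term $\int_{|z|>a}|z|^{2(n-1)}/h_{n-1}\,d\mu_n(z)$ (which tends to $1$ by incomplete-gamma asymptotics) and two remainder terms (which tend to $0$). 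Nothing in your proposal recovers this normalization; you assert the limit "producing precisely $\pi^{-1}\int_{\mathbb{U}}\bar\partial f\,\overline{\bar\partial g}$" without explaining where the prefactor comes from.

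There is also a genuine technical error: you claim the interchange of limit and integral is "justified by uniform-in-$n$ pointwise bounds on $\phi_n$" and dominated convergence. This cannot work. For $\phi_n(\nu,\cdot)$ to converge to $\pi\delta_\nu$ it must blow up near $\eta=\nu$, so it admits no uniform-in-$n$ integrable dominating function. What the paper actually uses is a weak-star convergence argument: the total mass on a small disc converges to $\pi$ (the computation above), while the mass away from the diagonal vanishes, i.e.\ $\lim_n\int_{A\cap|\nu-\eta|>\delta}|\phi_n(\nu,\bar\eta)|\,d^2\eta=0$ (the analogue of Lemma 12 in Rider--Vir\'ag, adapted to the product kernel by raising appropriate factors to the $M$-th power). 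Together with the continuity of $\bar\partial f$ and $\bar\partial g$, these two facts give the stated limit. You mention the off-diagonal decay only vaguely ("concentrated about the diagonal") and the mass computation not at all, and you misattribute the main difficulty to the boundary $|\nu|\to1$, whereas the paper fixes $|\nu|=a<1$ from the outset and the boundary behavior enters only in the separate Lemma 13 computation for the $H^{1/2}$ term of the variance, which is not part of Lemma \ref{firstlimlem}.
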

		This will allow us to quickly extend the limiting variance formula for more general test functions and prove Theorem \ref{cltforginibre}. 
		
		\begin{proof}[Proof of Theorem \ref{cltforginibre}]
			Following \cite{RV}, we expand the covariance:
			\begin{align*}
				\mbox{Cov}(N_n[f],N_n[g]))&=\int_\mathbb{C} f(z)\overline{g(z)}
				K_n(z,\bar{z})d\mu_n(z)\\
				&\qquad -\int_\mathbb{C}\int_\mathbb{C}f(z)\overline{g(w)}
				|K_n(z,\overline{w})|^2 d\mu_n(z)d\mu_n(w), 
			\end{align*}
			where (as discussed above) we assume $f,g$ are test functions supported on $|z| \leq 1 + \epsilon/2$ with continuous partial derivatives.  
			For test functions $f$ which are once continuously differentiable, the well known Cauchy-Pompeiu formula states (where we continue to let $\mathbb{U}$ denote the disc $|z|<1$):
			\begin{align*}
				f(\zeta)=
				-\frac{1}{\pi }\int_{\mathbb{U}}\frac{\bar{d}f(w)}{w-\zeta}d^2w
				+\frac{1}{2\pi i}\int_{\partial\mathbb{U}}\frac{f(w)}{w-\zeta}dw
			\end{align*}
			for $\zeta \in \mathbb{U}$.  
			Substituting this in and simplifying as in \cite{RV}, we can read the covariance of the linear statistic from:
			\begin{align*}
				\frac{1}{\pi^2}\int_{|v|<1+\epsilon}\int_{\nu<1+\epsilon}
				\partial f(\nu)\overline{\partial g(\eta)}\times\Bigg\{
				\int_{\mathbb{C}}\frac{1}{z-\nu}\overline{\frac{1}{z-\eta}}
				K_n(z,\bar{z})d\mu_n(z)\\-\int_{\mathbb{C}}\int_{\mathbb{C}}
				\frac{1}{z-nu}\overline{\frac{1}{z-\eta}}|K_n(z,\bar{w})|^2
				d\mu_n(z)d\mu_n(w)
				\Bigg\}d^2\nu d^2\eta.
			\end{align*}
			In our notation, this is just:
			\begin{align*}
				\frac{1}{\pi^2}\int_{|\nu|<1+\epsilon}\int_{|\eta|<1+\epsilon}
				\partial f(\nu)\overline{\partial g(\eta)}
				\phi_n(\nu,\eta)d^2\nu d^2\eta.
			\end{align*}
			By Lemma \ref{firstlimlem}: 
			\begin{align*}
				\lim_{n\to\infty}\frac{1}{\pi^2}\int_{|\nu|<1}\int_{
					|\eta|<1+
					\epsilon}
				\overline{\partial} f(\nu) \overline{\overline{\partial} g(\eta)}
				\phi_n(\nu,\overline{\eta})d^2\nu d^2\eta
				=
				\frac{1}{\pi}\int_{\mathbb{U}}
				\overline{\partial} f(\nu) \overline{\overline{\partial} g(\nu)}
				d^2\nu
			\end{align*}
			Setting $f=g$, this quantity becomes:
			\begin{align*}
				\frac{1}{\pi}\int_{\mathbb{U}}|\partial f(\nu)|^2d^2\nu
				=
				\frac{1}{4\pi}\int_{\mathbb{U}}|\nabla f(\nu)|^2d^2\nu.
			\end{align*}
			By the proof of Lemma 13 in \cite{RV} (with additional factors of $M$, which do not affect the resulting asymptotics), we also have the limit:
			\begin{align*}
				\lim_{n\to\infty}\frac{1}{\pi^2}\iint_{1<|\nu|,|\eta|<1+\epsilon}
				\overline{\partial} f(\nu) \overline{\overline{\partial} g(\eta)}
				\phi_n(\nu,\overline{\eta})d^2\nu d^2\eta
				=\frac{1}{2}\left<f,g\right>_{H^{1/2}(|z|=1)}.  
			\end{align*}
			Here, we have exploited the fact that our test functions are supported on the disk $|z| \leq 1 + \epsilon/2$.  
			Setting $f=g$ here as well, this quantity becomes:
			\begin{align*}
				\frac{1}{2}\|f \|^2_{H^{1/2}(\partial \mathbb{U})}.
			\end{align*}
			We can take the limit of the covariance as $n\to\infty$ to obtain:
			\begin{align*}
				\lim_{n\to\infty}\mbox{Var}(N_n[f])=
				\frac{1}{4\pi}\int_{\mathbb{U}}|\nabla f(\nu)|^2d^2\nu+
				\frac{1}{2}\|f\|^2_{H^{1/2}(\partial \mathbb{U})}.
			\end{align*}
			This is what we wanted to show. \end{proof} 
		
		It remains to prove Lemma \ref{firstlimlem}, which we turn to presently.
		\begin{proof}[Proof of Lemma \ref{firstlimlem}] Fix $\nu\in\mathbb{U}$, with $|\nu|=a<1$. Let $\mathbb{B}$ be the disc in the complex plane about the origin with radius equal to $b >|\nu|$. Our first task is to establish the following limit:
		\begin{align}
			\label{first_eq_rv}
			\lim_{n\to\infty}\int_{\mathbb{B}} \phi_n(\nu,\overline{\eta})d^2\eta=\pi.
		\end{align}
		To do this, we must investigate the integral:
		\begin{align*}
		I_{\mathbb{B}}&=\int_{\mathbb{B}}\phi_n(\nu,{\eta})d^2\eta \\
		&=
		\int_{\mathbb{B}}\int_{\mathbb{C}}\frac{1}{z-\nu}
		\overline{\left(\frac{1}{z-\eta}\right)}K_n(z,\bar{z})d\mu_n(z)
		d^2\eta\\
		&\qquad-\int_{\mathbb{B}}\int_{\mathbb{C}}\int_{\mathbb{C}}
		\frac{1}{z-\nu}
		\overline{\left(\frac{1}{w-\eta}\right)}K_n(z,\bar{w})K_n(w,\bar{z})
		d\mu_n(z)d\mu_n(w)
		d^2\eta.
		\end{align*}
		Our strategy will be to first rewrite $I_{\mathbb{B}}$ in a more tractable form, which is straightforward if somewhat lengthy to transcribe, and then, second, to send $n$ to infinity to obtain the desired limiting value of $\pi$. 
		
		Suppose $|z|>b$. By series expansion:
		\begin{align}
		\int_{|\eta|\leq b}\frac{1}{z-\eta}d^2\eta
		=\int_{|\eta|\leq b}\frac{1}{z}\left(1+\frac{\eta}{z}+\left(\frac{\eta}{z}\right)^2+\cdots 
		\right)
		d^2\eta
		=\frac{\pi b^2}{z}.
		\end{align}
		Suppose instead that $|z|<b$. By the Cauchy-Pompeiu formula (and using an appropriate partial fraction decomposition to show that the relevant line integral vanishes):
		\begin{align}
		\int_{|\eta|\leq b}\frac{1}{z-\eta}d^2\eta=\pi \bar{z}-
		\frac{1}{2i}\int_{|\eta|=b}\frac{\bar{\eta}}{\eta-z}d\eta
		=\pi \bar{z}.
		\end{align}
		We can expand:
		\begin{align*}
		\int_{0\leq |z| \leq a}\frac{z}{z-\nu}K_n(z,\bar{z})d\mu_n(z)
		=\int_{0\leq |z| \leq a}\frac{z}{\nu}
		\left(\sum_{j=0}^\infty\frac{z^j}{\nu^j} \right)K_n(z,\bar{z})d\mu_n(z).\\
		\end{align*}
		This quantity vanishes identically, due to rotational invariance. 
		Substituting in these formulas, $I_{\mathbb{B}}$ becomes:
		\begin{align*}
			\pi \int_{a\leq |z| \leq b}\frac{z}{z-\nu}
			K_n(z,\bar{z})d\mu_n(z)
			+\pi\int_{|z|>b}\frac{ b^2}{\bar{z}}\frac{1}{z-\nu}
			K_n(z,\bar{z})d\mu_n(z)
			\\
			-\pi \int_{|z|>a}\int_{a\leq |w| \leq b}
			\frac{w}{z-\nu}
			K_n(z,\bar{w})K_n(w,\bar{z})
			d\mu_n(z)d\mu_n(w)\\
			-\pi \int_{|z|>a}\int_{|w|>b}
			\frac{b^2}{\bar{w}}\frac{1}{z-\nu}
			K_n(z,\bar{w})K_n(w,\bar{z})
			d\mu_n(z)d\mu_n(w).
		\end{align*}
		
		By rotational invariance again:
		\begin{align*}
		\int_{|z|>b}\frac{ b^2}{\bar{z}}\frac{1}{z-\nu}
		K_n(z,\bar{z})d\mu_n(z)&=
		\int_{|z|>b}\frac{b^2}{|z|^2}
		\left(1+\frac{z}{\nu}+\cdots  \right)
		\left(\sum_{j=0}^{n-1}\frac{|z|^{2j}}{h_j} \right)d\mu_n(z)\\
		&=b^2\int_{|z|>b}
		\left(\sum_{j=0}^{n-1}\frac{|z|^{2j-2}}{h_j} \right)d\mu_n(z).
		\end{align*}
		Combining:
		\begin{align*}
		\pi \int_{a\leq |z| \leq b}\frac{z}{z-\nu}
		K_n(z,\bar{z})d\mu_n(z)
		+\pi\int_{|z|>b}\frac{ b^2}{\bar{z}}\frac{1}{z-\nu}
		K_n(z,\bar{z})d\mu_n(z)\\
		=\int_{|z| \leq b}
		\left(\sum_{j=0}^{n-1}\frac{|z|^{2j}}{h_j} \right)
		d\mu_n(z)
		+b^2\int_{|z|>b}
		\left(\sum_{j=0}^{n-1}\frac{|z|^{2j-2}}{h_j} \right)d\mu_n(z).
		\end{align*}
		
		Turning now to the double integral terms in $I_{\mathbb{B}}$, we can write:
		\begin{align*}
		&\int_{|z|\geq a}\int_{a\leq |w| \leq b}
		\frac{w}{z-\nu}
		K_n(z,\bar{w})K_n(w,\bar{z})
		d\mu_n(z)d\mu_n(w)\\
		&=\int_{|z|\geq a}\int_{a\leq |w| \leq b}
		\frac{w}{z}\left(\sum_{j=0}^\infty \frac{\nu^j}{z^j}\right)
		\left(\sum_{j=0}^{n-1}\frac{z^j\bar{w}^j}{h_j} \right)
		\left(\sum_{j=0}^{n-1}\frac{w^j\bar{z}^j}{h_j} \right)
		d\mu_n(z)d\mu_n(w).
		\end{align*}
		By orthogonality, this last display is:
		\begin{align*}
		\sum_{j=0}^{n-2}\int_{|z|\geq a}\int_{a\leq |w| \leq b}
		\left(\frac{|w|^{j+1}|z|^j}{h_jh_{j+1}} \right)
		d\mu_n(z)d\mu_n(w).
		\end{align*}
		The other double integral may be handled similarly:
		\begin{align*}
		&\int_{|z|>a}\int_{|w|>b}
		\frac{b^2}{\bar{w}}\frac{1}{z-\nu}
		K_n(z,\bar{w})K_n(w,\bar{z})
		d\mu_n(z)d\mu_n(w)\\
		&=\int_{|z|\geq a}\int_{|w|>b}
		\frac{b^2}{z\bar{w}}\left(\sum_{j=0}^\infty
		\frac{\nu^j}{z^j}\right)
		\left(\sum_{j=0}^{n-1}\frac{z^j\bar{w}^j}{h_j} \right)
		\left(\sum_{j=0}^{n-1}\frac{w^j\bar{z}^j}{h_j} \right)
		d\mu_n(z)d\mu_n(w)
		\\&=
		b^2\sum_{j=0}^{n-2}\int_{|z|\geq a}\int_{|w|>b} 
		\left(\frac{|z|^{2j}|w|^{2j}}{h_j h_{j+1}} \right)
		d\mu_n(z)d\mu_n(w).
		\end{align*}
		Combining, we obtain:
		\begin{align*}
		&\pi \int_{|z|>a}\int_{a\leq |w| \leq b}
		\frac{w}{z-\nu}
		K_n(z,\bar{w})K_n(w,\bar{z})
		d\mu_n(z)d\mu_n(w)\\
		&+\pi \int_{|z|>a}\int_{|w|>b}
		\frac{b^2}{\bar{w}}\frac{1}{z-\nu}
		K_n(z,\bar{w})K_n(w,\bar{z})
		d\mu_n(z)d\mu_n(w)\\
		&=
		\pi\sum_{j=0}^{n-2}\int_{|z|\geq a}\int_{a\leq |w| \leq b}
		\left(\frac{|w|^{j+1}|z|^j}{h_jh_{j+1}} \right)
		d\mu_n(z)d\mu_n(w)
		\\
		&+\pi b^2\sum_{j=0}^{n-2}\int_{|z|\geq a}\int_{|w|>b} 
		\left(\frac{|z|^{2j}|w|^{2j}}{h_j h_{j+1}} \right)
		d\mu_n(z)d\mu_n(w).
		\end{align*}
		Substituting into the formula for $I_{\mathbb{B}}$ provides:
		\begin{align*}
		I_{\mathbb{B}}&=\pi \int_{|z| \leq b}
		\left(\sum_{j=0}^{n-1}\frac{|z|^{2j}}{h_j} \right)
		d\mu_n(z)
		+b^2 \pi \int_{|z|>b}
		\left(\sum_{j=0}^{n-1}\frac{|z|^{2j-2}}{h_j} \right)d\mu_n(z)\\
		&-\pi\sum_{j=0}^{n-2}\int_{|z|\geq a}\int_{0\leq |w| \leq b}
		\left(\frac{|w|^{2(j+1)}|z|^{2j}}{h_jh_{j+1}} \right)
		d\mu_n(z)d\mu_n(w)
		\\
		&-\pi b^2\sum_{j=0}^{n-2}\int_{|z|\geq a}\int_{|w|>b} 
		\left(\frac{|z|^{2j}|w|^{2j}}{h_j h_{j+1}} \right)
		d\mu_n(z)d\mu_n(w).
	\end{align*}
		After recombination:
		\begin{align*}
			&\pi \sum_{j=0}^{n-1}\left[\int_{|z|>a}
			\left(\frac{|z|^{2j}}{h_j} \right)
			d\mu_n(z)
			-\int_{|z|>b}\left(1-\frac{b^2}{|z|^2}\right)
			\left(\frac{|z|^{2j}}{h_j} \right)d\mu_n(z)\right]\\
			&-\pi\sum_{j=0}^{n-2}\int_{|z|\geq a}\int_{\mathbb{C}}
			\left(\frac{|w|^{2(j+1)}|z|^{2j}}{h_jh_{j+1}} \right)
			d\mu_n(z)d\mu_n(w)
			\\
			&+\pi \sum_{j=0}^{n-2}\int_{|z|\geq a}\int_{|w|>b}
			\left(1-\frac{b^2}{|w|^2} \right) 
			\left(\frac{|z|^{2j}|w|^{2j}}{h_j h_{j+1}} \right)
			d\mu_n(z)d\mu_n(w).
		\end{align*}
		By the definition of $h_{j+1}$ and cancellation:
		\begin{align*}
			I_{\mathbb{B}}&=\pi
			\int_{|z|>a}\left(\frac{|z|^{2(n-1)}}{h_{n-1}}\right)d\mu_n(z) -
			\pi\int_{|z|>b}\left(1-\frac{b^2}{|z|^2}\right)
			d\mu_n(z)
			\\
			&\qquad -\pi \sum_{j=0}^{n-2}\int_{|z|\leq a}\int_{|w|>b}
			\left(1-\frac{b^2}{|w|^2} \right) 
			\left(\frac{|z|^{2j}|w|^{2j}}{h_j h_{j+1}} \right)
			d\mu_n(z)d\mu_n(w)
		\end{align*}
		We have now derived the expression for $I_{\mathbb{B}}$ which we wanted, and what remains is to show that this expression goes to the claimed limit for large values of $n$. As we shall see, in the large $n$ limit the first integral goes to one while the second and third vanish, so the claimed limiting expression holds.
		
		To obtain the estimates we require, we need to plug in the definition of $d\mu_n(z)$. 
		We begin with 
		\begin{align*}
		&\pi^{1-M}n^M \int_{|z|<a}\int_{\mathbb{C}^M}
		|z|^{2(n-1)}
		\delta^2(n^{M/2}z-\xi_M\dotsm \xi_1)\left(\prod_{j=1}^M e^{-|\xi_j|^2}d^2\xi_j\right)
		d^2z\\
		&=\pi^{1-M}n^{-M(n-1)}\int_{\mathbb{C}^M}
		|\xi_M\dotsm \xi_1|^{2(n-1)}1_{|\xi_M\dotsm \xi_1|\leq n^{M/2}a}
		\left(\prod_{j=1}^M e^{-|\xi_j|^2}d^2\xi_j\right). 
		\end{align*}
		Observe that if $|\xi_1 \cdots \xi_M| \leq n^{M/2} a$, then there exists $j$ such that $|\xi_j| \leq n^{1/2} a^{1/M}$.  Thus, we find that
		\[ 1_{|\xi_M\dotsm \xi_1|\leq n^{M/2}a} \leq 1_{|\xi_1| \leq n^{1/2} a^{1/M}} + \cdots + 1_{|\xi_M| \leq n^{1/2} a^{1/M}} , \]
		and hence (applying \eqref{eq:gammabnd})
		\begin{align*}
		 \frac{1}{\pi^M} \int_{\mathbb{C}^M}
		&|\xi_M\dotsm \xi_1|^{2(n-1)}1_{|\xi_M\dotsm \xi_1|\leq n^{M/2}a}
		\left(\prod_{j=1}^M e^{-|\xi_j|^2}d^2\xi_j\right) \\
		&\ll [\Gamma(n)]^{M-1} \frac{1}{\pi} \int_{\mathbb{C}} 1_{|\xi| \leq n^{1/2} a^{1/M}} |\xi|^{2(n-1)} e^{-|\xi|^2} d^2 \xi \\
		&=  [\Gamma(n)]^{M-1} \gamma(n, a^{2/M} n), 
		\end{align*}
		where the last step follows by rewriting the integral in polar coordinates and making a substitution.  Here, $\gamma$ is the incomplete gamma function defined by
		\[ \gamma(x, y) = \int_0^y t^{x-1} e^{-t} dt. \]
		Since $a < 1$, we can apply a standard asymptotic expansion for the incomplete gamma function (see, for instance, \cite{Temme1, Temme2}) to conclude that
		\begin{equation} \label{eq:incompgamma}
			\gamma(n, a^{2/M} n) = o(\Gamma(n)). 
		\end{equation}
		
		Combining the bounds above allows us to estimate
		
		\begin{align*}
		\left(\frac{n^{n-1}}{(n-1)!}\right)^M
		& \left(
		\pi^{1-M}n^M \int_{|z|<a}\int_{\mathbb{C}^M}
		|z|^{2(n-1)}
		\delta^2(n^{M/2}z-\xi_M\dotsm \xi_1)\left(\prod_{j=1}^M e^{-|\xi_j|^2}d^2\xi_j\right)
		d^2z
		\right)
		\\
		&\ll \left(\frac{n^{n-1}}{(n-1)!}\right)^M n^{-M(n-1)} [\Gamma(n)]^{M-1} \gamma(n, a^{2/M} n). 
		\end{align*}
		This expression goes to zero by \eqref{eq:incompgamma}, which implies:
		\begin{align*}
			\lim_{n\to\infty}\int_{|z|<a}\left(\frac{|z|^{2(n-1)}}{h_{n-1}}
			\right)d\mu_n(z)=0.
		\end{align*}
		Since  $h_{n-1}$ is the normalization constant, this is exactly equivalent to:
		\begin{align}
		\int_{|z|>a}\left(\frac{|z|^{2(n-1)}}{h_{n-1}}\right)
		d\mu_n(z)\to 1.
		\end{align}
		We similarly obtain:
		\begin{align*}
		\int_{|z|<b}
		d\mu_n(z)&=
		\pi^{1-M}n^M \int_{|z|<b}\int_{\mathbb{C}^M}
		\delta^2(n^{M/2}z-\xi_M\dotsm \xi_1)\left(\prod_{j=1}^M e^{-|\xi_j|^2}d^2\xi_j\right)
		d^2z\\
		&= \pi^{1-M} \int_{\mathbb{C}^M}1_{|\xi_1\dotsm\xi_M|<n^{M/2}b}
		\left(\prod_{j=1}^M e^{-|\xi_j|^2}d^2\xi_j\right)\\
		&\geq \pi^{1-M} \left(\int_{|\xi|<n^{1/2}b^{1/M}}
		 e^{-|\xi|^2}d^2\xi\right)^M\\&=
		\pi\left(1- o(1) \right).
		\end{align*}
		Consequently:
		\begin{align}
		\int_{|z|>b}\left(1-\frac{b^2}{|z|^2}\right)
		d\mu_n(z)\to 0.
		\end{align}
		It remains to investigate:
		\begin{align*}
		\sum_{j=0}^{n-2} &\int_{|z|\leq a}\int_{|w|>b}
		\left(1-\frac{b^2}{|w|^2} \right) 
		\left(\frac{|z|^{2j}|w|^{2j}}{h_j h_{j+1}} \right)
		d\mu_n(z)d\mu_n(w)\\
		&\leq C\sum_{j=0}^{n-2}\left(\int_{|z|\leq a}
		\left(\frac{|z|^{2j}}{h_j} \right)d\mu_n(z)\right)
		\left(\int_{|w|>b}
		\left(\frac{|w|^{2j}}{h_{j+1}} \right)
		d\mu_n(w)\right).
		\end{align*}
		This goes to zero by a similar argument as in \cite{RV} (in fact, this reduces to essentially their Equation (7.10) raised to the $M$-th power), which concludes the proof of (\ref{first_eq_rv}).
				
		Now, by the same argument as in the proof of Lemma 7.2 in \cite{RV} (with some terms raised to the $M$-th power which, as in the preceding argument, does not change the resulting asymptotics) we obtain, for any region $A$ contained in $|z|<1+\epsilon$ and any $\delta > 0$, the following limit:
		\begin{align}
		\label{second_eq_rv}
		\lim_{n\to\infty}\int_{A\cap |\nu-\eta|>\delta}
		\left|\phi_n(\nu,\overline{\eta})\right|d^2\eta=0.
		\end{align}
		Following the arguments in \cite{RV}, we combine (\ref{first_eq_rv}) and (\ref{second_eq_rv}) along with the assumptions on $f$ and $g$ to see that 
		\begin{align*}
				\lim_{n\to\infty}\frac{1}{\pi}\int_{|\nu|<1}\int_{
					|\eta|<1+
					\epsilon}
				\overline{\partial} f(\nu) \overline{\overline{\partial} g(\eta)}
				\phi_n(\nu,\overline{\eta})d^2\nu d^2\eta
				=\int_{\mathbb{U}}
				\overline{\partial} f(\nu) \overline{\overline{\partial} g(\nu)}
				d^2\nu,
			\end{align*}
		which completes the proof of the lemma.\end{proof}
		
		\subsection{Products of Truncated Unitary Matrices: Proof of Theorem \ref{unit4mom}}
		
		The methods developed in \cite{RV} also extend to products of truncated unitary matrices. The eigenvalues of the product of $M$ truncated unitary matrices (that is, the product of the $n$ by $n$ matrices formed by taking the $n$ by $n$ minors of $M$ independent $K$ by $K$ random unitary matrices, distributed according the uniform Haar measure) form a determinantal point process with the following kernel \cite{AB}:
		\begin{align*}
			K_{n}(z,\bar{u})=\sum_{t=0}^{n-1} \frac{(z\overline{u})^t}{h_t}, 
		\end{align*}
where the normalization constants $h_t$ will be defined below (these differ from the normalization constants appearing in the previous subsection).  
		We define $\kappa=K-n$ and assume that $\kappa=\lfloor \tau n \rfloor$ holds for some fixed constant $\tau\in(1/2,1)$. The associated measures $\mu_n$ are simply the product of the Lebesgue measure on the complex plane with the following weight function:
		\begin{align*}
			w(z)=\pi^{1-M} \int_{\mathbb{U}^M} \prod_{m=1}^M \frac{1}{\Gamma(\kappa)}
			\Bigg[
			(1-|\xi_m|^2)^{\kappa-1}
			 \delta^2(z-\xi_M\dotsm\xi_1)d^2\xi_m\Bigg].
		\end{align*}
		The normalization here is slightly different than in \cite{AB}, simply for ease of presentation. Notice that this weight is rotationally invariant, so once again we can apply the approach of \cite{RV} and aim to show that the following expression vanishes for $k \geq 3$:
		\begin{align}
			\sum_{m=1}^k \frac{(-1)^m}{m}\sum_{\sigma: [k]\twoheadrightarrow [m]}
			\Phi_{m}\left( \sigma (z^{a_1} \bar{z}^{b_1}, \ldots, z^{a_k} \bar{z}^{b_k} ) \right).
		\end{align} 
		Recall that $\Phi_m$ is defined in \eqref{def:Phi_m} and $\sigma (z^{a_1} \bar{z}^{b_1}, \ldots, z^{a_k} \bar{z}^{b_k} )$ is defined using \eqref{def:sigma}.  
		Using Fubini's theorem and polar coordinates, we can integrate against the weight function:
		\begin{align*}
			\int_{\mathbb{C}}|z|^{2t}w(z)d^2z
			&=\pi^{1-M}
			\prod_{m=1}^M\left[\frac{1}{\Gamma(\kappa)} \int_{\mathbb{U}}
			|\xi_{m}|^{2t}(1-|\xi_m|^2)^{\kappa-1}d^2\xi_m\right]\\
			&=\pi^{1-M}
			\left[\frac{2\pi}{\Gamma(\kappa)} \int_{0}^1
			r^{2t+1}(1-r^2)^{\kappa-1}d r\right]^M.
		\end{align*}
		In other words:
		\begin{align} \label{def:h_tunit}
			h_t=\int_{\mathbb{C}}|z|^{2t}w(z)d^2z=\pi
			\left[\frac{\Gamma(t+1)}{\Gamma(t+\kappa+1)}\right]^M.
		\end{align}
		
		First, let's compute the expectation of the normalized linear statistic of a monomial. We have:
		\begin{align*}
			\mathbf{E}\left[\frac{1}{n}\sum_{j=1}^n \left|\lambda_j\right|^{2L}\right]&=\frac{1}{n}
			\sum_{t=0}^{n-1}\int_{\mathbb{C}} |z|^{2L+2t}
			\frac{1}{\pi}\left(\frac{\Gamma(t+\kappa+1)}
			{\Gamma(t+1)} \right)^M
			w(z) d^2z\\
			&=\frac{1}{n}
			\sum_{t=0}^{n-1}
			\left(\frac{\Gamma(t+\kappa+1)}{\Gamma(t+1)}
			\frac{\Gamma(L+t+1)}{\Gamma(L+t+\kappa+1)}
			\right)^M\\
			&=\frac{1}{n}
			\sum_{t=0}^{n-1}
			\left(\frac{(L+t)\dotsm(1+t)}
			{(\kappa+[L+t])\dotsm(\kappa+[1+t])}
			\right)^M.
		\end{align*}
		At leading order:
		\begin{align*}
			\frac{1}{n}
			\sum_{t=0}^{n-1}
			\left(\frac{t^L}
			{(\kappa+t)^L}
			\right)^M
			=\frac{1}{n}
			\sum_{t=0}^{n-1}
			\left(\frac{t/n}
			{\kappa/n+t/n}
			\right)^{LM}
			\to \int_{0}^1 \left(\frac{x}{\tau+x}\right)^{LM}dx.
		\end{align*}
		We identify this limit with the moments of the following limiting density for the eigenvalues of products of truncated unitary matrices derived in \cite{AB} (which vanishes on $|z|>(1/(\tau+1))^{M/2}$):
		\begin{align}
			\phi(z)=\frac{1}{\pi M}\frac{\tau}{|z|^{2(1-1/M)}(1-|z|^{2/M})^2}
			\mbox{  }
			\mbox{  on }
			|z|\leq \left(\frac{1}{\tau+1}\right)^{M/2}.
		\end{align}
		Indeed, we have (with $\mathbb{I}(z)$ denoting the indicator function):
		\begin{align*}
			\frac{\tau}{\pi M}\int_{\mathbb{C}}|z|^{2L}	
			\left( \mathbb{I}(z)_{ \{ |z|\leq (1-\tau/(\tau+1))^{M/2} \}}
			\frac{1}{|z|^{2(1-1/M)}(1-|z|^{2/M})^2}
			\right)d^2z
			\\
			=\frac{2\tau}{M}
			\int_{0}^{(1-\tau/(\tau+1))^{M/2})} r^{2L}\times\left(
			\frac{r^{2/M-1}}{(1-r^{2/M})^2}
			\right)dr.
		\end{align*}
		Setting $x=\tau [1/(1-r^{2/M})-1]$, so that $r^{2/M}=1-\tau/(\tau+x)$ and also $dx=2r\tau/(1-r^2)^2 dy$, and substituting:
		\begin{align*}
			\int_{0}^1 \left(1-\frac{\tau}{\tau+x}\right)^{ML}dx
			=\int_{0}^1 \left(\frac{x}{\tau+x}\right)^{ML} dx.
		\end{align*}

		To deal with the higher cumulants, we apply the same argument as in the case of Ginibre products. We fix an arbitrary choice of nonnegative integers $\alpha_1,\ldots,\alpha_m$ and $\beta_1,\ldots,\beta_m$, and taking $f_i(z) = z^{\alpha_i} \bar{z}^{\beta_i}$ we calculate:
		\begin{align*}
			\Phi_m(f_1,\ldots,f_m)&=
			\int_{\mathbb{C}^m} z_1^{\alpha_1}\overline{z_1^{\beta_1}}\dotsm
			z_m^{\alpha_m}\overline{z_m^{\beta_m}}
			\left(\sum_{t=0}^{n-1}\frac{(z_1\overline{z_2})^t}{h_t} \right)\cdots 
			\left(\sum_{t=0}^{n-1}\frac{(z_m\overline{z_1})^t}{h_t} \right) \prod_{j=1}^m w({z_j}) d^2 z_j \\
			&=\sum_{q_1,\ldots,q_m=0}^n\int_{\mathbb{C}^m}\prod_{j=1}^m
			\left[
			h_{q_j}^{-1}(z_j)^{\alpha_j}\overline{(z_j)^{\beta_j}}
			(z_j\overline{z_{j+1}})^{q_j}\right]
			w(z_1)d^2z_1\dotsm w(z_m)d^2z_m
		\end{align*}
		again with the convention that $z_{m+1} = z_1$.  Again, we define:
		\begin{align*}
			\gamma_j&=\beta_j-\alpha_j\\
			\eta_j&=\gamma_1+\cdots +\gamma_j.
		\end{align*}
		We will continue to use $\eta_{max}$ and $\eta_{min}$ to denote the maximal and minimal values of $\eta_j$. The nonvanishing condition again becomes:
		\begin{align*}
			q_j=q_m+\eta_j\\
			\sum_{j=1}^m \alpha_j =\sum_{j=1}^m \beta_j.
		\end{align*}
		Setting $\mathit{l}=q_m$, we obtain:
		\begin{align*}
			\Phi_m(f_1,\ldots,f_m)=\sum_{\mathit{l}=-\eta_{min}}
			^{n-1-\eta_{max}}
			\prod_{j=1}^m h_{\mathit{l}+\eta_j}^{-1}
			\times\left[\int_{\mathbb{C}}
			|z|^{2 (\textit{l} + \eta_j + \alpha_j) }w(|z|)d^2z\right].
		\end{align*}
		Substituting the moments of the weight:
		\begin{align*}
			\Phi_m(f_1,\ldots,f_m)
			=\pi^m\sum_{\mathit{l}=-\eta_{min}}^{n-1-\eta_{max}}
			\prod_{j=1}^m h_{\mathit{l}+\eta_j}^{-1}
			\left[\frac{(\mathit{l}+\eta_j+\alpha_j)!}
			{(\kappa+\mathit{l}+\eta_j+\alpha_j)!}
			\right]^M.
		\end{align*}
		By definition (see \eqref{def:h_tunit}):
		\begin{align*}
			h^{-1}_{\mathit{l}+\eta_j}=\frac{1}{\pi}
			\left[\frac{(\kappa+\mathit{l}+\eta_j)!}
			{(\mathit{l}+\eta_j)!}\right]^M.
		\end{align*}
		Substituting this into the previous expression:
		\begin{align*}
			\Phi_m(f_1,\ldots,f_m)
			&=
			\sum_{\mathit{l}=-\eta_{min}}^{n-1-\eta_{max}}
			\prod_{j=1}^m
			\left[
			\frac{(\kappa+\mathit{l}+\eta_j)!}
			{(\mathit{l}+\eta_j)!}
			\frac{(\mathit{l}+\eta_j+\alpha_j)!}
			{(\kappa+\mathit{l}+\eta_j+\alpha_j)!}
			\right]^M \\
			&=
			\sum_{\mathit{l}=-\eta_{min}}^{n-1-\eta_{max}}
			\prod_{j=1}^m
			\left[
			\frac{(\mathit{l}+\eta_j+1)\dotsm (\mathit{l}+\eta_j+\alpha_j)}
			{(\mathit{l}+\eta_j+\kappa+1)\dotsm (\mathit{l}+\eta_j+\kappa+\alpha_j)}
			\right]^M\\
			&=\sum_{\mathit{l}=-\eta_{min}}^{n-1-\eta_{max}}
			\prod_{j=1}^m
			\left[\prod_{t=1}^{\alpha_j} \frac{\mathit{l}+\eta_j+t}{\mathit{l}+\kappa}
			\left(1
			-\frac{\eta_j+t}{\mathit{l}+\kappa}
			+O\left(\frac{1}{\kappa^2}\right)
			\right)
			\right]^M.
		\end{align*}
		Here we have used the identity (for $\xi=\eta_j+t$):
		\begin{align}
			\frac{\mathit{l}+\xi}{\mathit{l}+\xi+\kappa}
			=\frac{\mathit{l}+\xi}{\mathit{l}+\kappa}\left(
			1-\frac{\xi}{\mathit{l}+\kappa}
			+\frac{\xi^2}{(\mathit{l}+\kappa)^2}
			\frac{1}{1+\xi/(\mathit{l}+\kappa)}
			\right).
		\end{align}
		We may therefore further rewrite $\Phi_{m}(f_1,\ldots,f_m)$ as:
		\begin{align*}
			\sum_{\mathit{l}=-\eta_{min}}^{n-1-\eta_{max}}
			\prod_{j=1}^m\left(\frac{1}{\mathit{l}+\kappa}\right)^{M\alpha_j}
			\left(\mathit{l}^{M\alpha_j}
			+\mathit{l}^{M\alpha_j-1}\left(\frac{M\kappa}{\mathit{l}+\kappa}
			\right)
			\sum_{t=1}^{\alpha_j}(\eta_j+t)+O\left(\mathit{l}^{M\alpha_j-2}\right)\right).
		\end{align*}
		
		Multiplying across all indices $j$ (and writing $s=\alpha_1+\cdots +\alpha_m$), we see that $\Phi_{m}(f_1,\ldots,f_m)$ is:
		\begin{align*}
			&\sum_{\mathit{l}=-\eta_{min}}^{n-1-\eta_{max}}
			\left(\frac{1}{\mathit{l}+\kappa}\right)^{Ms}
			\left(\mathit{l}^{Ms}
			+\mathit{l}^{Ms-1}\left(\frac{M\kappa}{\mathit{l}+\kappa}\right)
			\sum_{j=1}^m\sum_{t=1}^{\alpha_j}(\eta_j+t)+O\left(\mathit{l}^{Ms-2}\right)\right)\\
			&=\sum_{\mathit{l}=-\eta_{min}}^{n-1-\eta_{max}}
			\left(\frac{\mathit{l}}{\mathit{l}+\kappa}\right)^{Ms}+M\kappa
			\sum_{\mathit{l}=-\eta_{min}}^{n-1-\eta_{max}}
			\frac{\mathit{l}^{Ms-1}}{(\mathit{l}+\kappa)^{Ms+1}}
			\left(\sum_{j=1}^m\left[\alpha_j\eta_j+\frac{\alpha_j(\alpha_j+1}{2}\right]\right)+O\left(\frac{1}{\kappa} \right).
		\end{align*}
		We will now quickly estimate the terms in our expression for $\Phi_m$ separately. By the fundamental theorem of calculus:
		\begin{align*}
			\left|\left(\frac{\mathit{n}}
			{\mathit{n}+\kappa} \right)^{Ms}
			-\left(\frac{\mathit{l}}
			{\mathit{l}+\kappa} \right)^{Ms}\right|\leq O\left(\frac{n-\mathit{l}}{\kappa} \right).
		\end{align*}
		This estimate affords us the expansion:
		\begin{align*}
			\sum_{\mathit{l}=-\eta_{min}}^{n-1-\eta_{max}}
			\left(\frac{\mathit{l}}
			{\mathit{l}+\kappa} \right)^{Ms}
			&=\sum_{\mathit{l}=1}^{n}
			\left(\frac{\mathit{l}}
			{\mathit{l}+\kappa} \right)^{Ms}
			-\sum_{\mathit{l}=n-1-\eta_{max}}^n \left(\frac{n}
			{n+\kappa} \right)^{Ms}\\
			&+
			\sum_{\mathit{l}=n-1-\eta_{max}}^n\left( 
			\left(\frac{\mathit{n}}
			{\mathit{n}+\kappa} \right)^{Ms}
			-\left(\frac{\mathit{l}}
			{\mathit{l}+\kappa} \right)^{Ms}\right)
			+O\left(\frac{1}{\kappa}\right).
		\end{align*}
		This last display is just:
		\begin{align*}
			\sum_{\mathit{l}=1}^{n}
			\left(\frac{\mathit{l}}
			{\mathit{l}+\kappa} \right)^{Ms}
			-\left(1+\eta_{max}\right) \left(\frac{n}
			{n+\kappa} \right)^{Ms}+O\left(\frac{1}{\kappa} \right).
		\end{align*}
		By a similar argument:
		\begin{align*}
			\sum_{\mathit{l}=-\eta_{min}}^{n-1-\eta_{max}}
			\frac{\kappa\mathit{l}^{Ms-1}}
			{\left(\mathit{l}+\kappa\right)^{Ms+1}}&=
			\sum_{\mathit{l}=1}^{n}\frac{\kappa \mathit{l}^{Ms-1}}
			{(\mathit{l}+\kappa)^{Ms+1}}-\left(1+\eta\right)
			\left(\frac{\kappa n^{Ms-1}}{(n+\kappa)^{Ms+1}}\right)
			+O\left(\frac{1}{\kappa^2}\right)\\
			&=\sum_{\mathit{l}=1}^{n}\frac{\kappa \mathit{l}^{Ms-1}}
			{(\mathit{l}+\kappa)^{Ms+1}}
			+O\left(\frac{1}{\kappa}\right)
			.
		\end{align*}
		
		Plugging these approximations back into our equation for $\Phi_m$, we obtain:
		\begin{align*}
			\Phi_m(f_1,\ldots,f_m)
			&=
			\sum_{\mathit{l}=1}^{n}
			\left(\frac{\mathit{l}}
			{\mathit{l}+\kappa} \right)^{Ms}
			-\left(1+\eta_{max}\right) \left(\frac{1}
			{1+\tau} \right)^{Ms}\\
			&+\left(\sum_{\mathit{l}=1}^{n}\frac{M\kappa \mathit{l}^{Ms-1}}
			{(\mathit{l}+\kappa)^{Ms+1}} \right)
			\left(\sum_{j=1}^m\left[\alpha_j\eta_j+\frac{\alpha_j(\alpha_j+1)}{2}\right]\right)+O\left(\frac{1}{\kappa} \right).
		\end{align*}
		Therefore, setting $\alpha_j = \sum_{i : \sigma(i) = j} a_i$ and $\beta_j= \sum_{i : \sigma(i) = j} b_i$ for each $j$, in order to show that the higher cumulants vanish we must show that the following expression vanishes in the large dimensional limit:
		\begin{align*}
			\sum_{m=1}^k \frac{(-1)^m}{m}\sum_{\sigma: [k]\twoheadrightarrow [m]}\Bigg[
			\sum_{\mathit{l}=1}^{n}
			\left(\frac{\mathit{l}}
			{\mathit{l}+\kappa} \right)^{Ms}
			-\left(1+\eta_{max}\right) \left(\frac{1}
			{1+\tau} \right)^{Ms}\\
			+\left(\sum_{\mathit{l}=1}^{n}\frac{M\kappa \mathit{l}^{Ms+1}}
			{(\mathit{l}+\kappa)^{Ms+1}} \right)
			\left(\sum_{j=1}^m\left[\alpha_j\eta_j
			+\frac{\alpha_j(\alpha_j+1)}{2}\right]\right)\Bigg].
		\end{align*} 
		By Lemma 5.1 in \cite{RV}, if $k\geq 2$:
		\begin{align*}
			\sum_{m=1}^k \frac{(-1)^m}{m}\sum_{\sigma: [k]\twoheadrightarrow [m]}\Bigg[
			\sum_{\mathit{l}=1}^{n}
			\left(\frac{\mathit{l}}
			{\mathit{l}+\kappa} \right)^{Ms}
			- \left(\frac{1}
			{1+\tau} \right)^{Ms}\Bigg]=0.
		\end{align*}
		By Lemma 5.5 in \cite{RV}, if $k\geq 3$:
		\begin{align*}
			\left(\frac{1}
			{1+\tau} \right)^{Ms}
			\sum_{m=1}^k \frac{(-1)^m}{m}\sum_{\sigma: [k]\twoheadrightarrow [m]}
			\eta_{max}=0.
		\end{align*}
		By Lemma 5.4 in \cite{RV}, if $k\geq 3$:
		\begin{align*}
			\left(\sum_{\mathit{l}=1}^{n}\frac{M\kappa \mathit{l}^{Ms+1}}
			{(\mathit{l}+\kappa)^{Ms+1}} \right)
			\sum_{m=1}^k \frac{(-1)^m}{m}\sum_{\sigma: [k]\twoheadrightarrow [m]}\Bigg[
			\left(\sum_{j=1}^m\left[\alpha_j\eta_j
			+\frac{\alpha_j(\alpha_j+1)}{2}\right]\right)\Bigg]=0.
		\end{align*} 
		We can then conclude that the higher cumulants vanish in the limit, and the linear statistic converges to a normal random variable. 
		
		It remains to compute the variance. Let $z^{a_1}\bar{z}^{b_1}$ and $z^{a_2}\bar{z}^{b_2}$ be monomials such that $a_1+a_2 = b_1+b_2$ (we will again denote this common quantity by $s$, for brevity). We have:
		\begin{align*}
		\mbox{Cov}(z^{a_1}\bar{z}^{b_1},z^{a_2}\bar{z}^{b_2})
		=\Phi_1(z^{a_1+a_2}\bar{z}^{b_1+b_2})-
		\Phi_2(z^{a_1}\bar{z}^{b_1},z^{a_2}\bar{z}^{b_2}),
		\end{align*}
		where:
		\begin{align*}
		\Phi_1(z^{a_1+a_2}\bar{z}^{b_1+b_2})&=
		\sum_{\mathit{l}=1}^{n}
		\left(\frac{\mathit{l}}
		{\mathit{l}+\kappa} \right)^{Ms}
		-\left(\frac{1}{1+\tau} \right)^{Ms}\\
		&+\left(\sum_{\mathit{l}=1}^{n}\frac{M\kappa \mathit{l}^{Ms+1}}
		{(\mathit{l}+\kappa)^{Ms+1}} \right)
		\left(
		\frac{(a_1+a_2)(a_1+a_2+1)}{2}\right)+o(1).
		\end{align*}
		And also:
		\begin{align*}
		\Phi_2(z^{a_1}\bar{z}^{b_1},z^{a_2}\bar{z}^{b_2})&=
		\sum_{\mathit{l}=1}^{n}
		\left(\frac{\mathit{l}}
		{\mathit{l}+\kappa} \right)^{Ms}
		-(1+\max(0,b_1-a_1))\left(\frac{1}
		{1+\tau} \right)^{Ms}\\
		&+\left(\sum_{\mathit{l}=1}^{n}\frac{M\kappa \mathit{l}^{Ms+1}}
		{(\mathit{l}+\kappa)^{Ms+1}} \right)
		\left(a_1(b_1-a_1)+\frac{a_1(a_1+1)}{2}
		+\frac{a_2(a_2+1)}{2}\right)+o(1).
		\end{align*}
		Canceling:
		\begin{align*}
			\mbox{Cov}(z^{a_1}\bar{z}^{b_1},z^{a_2}\bar{z}^{b_2})
			=\left(\sum_{\mathit{l}=1}^{n}\frac{M\kappa \mathit{l}^{Ms-1}}
			{(\mathit{l}+\kappa)^{Ms+1}} \right)a_1b_2+\max(0,b_1-a_1))\left(\frac{1}
			{1+\tau} \right)^{Ms}+o(1).
		\end{align*}
		Here we have used the identity:
		\begin{align*}
		\frac{(a_1+a_2)(a_1+a_2+1)-a_1(a_1+1)-a_2(a_2+1)}{2}
		&-a_1(b_1-a_1)\\
		&=a_1a_2-a_1(b_1-a_1)\\
		&=a_1b_2.
		\end{align*}
		To take the limit of the covariance, we will apply the Riemann sum approximation:
		\begin{align*}
			\sum_{\mathit{l}=1}^{n}\frac{\kappa \mathit{l}^{Ms-1}}
			{(\mathit{l}+\kappa)^{Ms+1}}&=
			\frac{\kappa}{n}\left(\frac{1}{n}
			\sum_{\mathit{l}=1}^{n}
			\frac{\left(\mathit{l}/n\right)^{Ms-1}}
			{(\mathit{l}/n+\kappa/n)^{Ms+1}}\right)\\
			&\to\tau\int_{0}^1 \left(\frac{x^{Ms-1}}{(x+\tau)^{Ms+1}}\right)dx.
		\end{align*}
		The limiting covariance is then:
		\begin{align*}
		M\tau a_1b_2\int_0^1 \left(\frac{x^{Ms-1}}{(x+\tau)^{Ms+1}}\right)dx
		+\max(0,b_1-a_1))\left(\frac{1}
		{1+\tau} \right)^{Ms}.
		\end{align*}
		To interpret the first term of the limiting covariance, notice that:
		\begin{align*}
			\frac{1}{\pi}\int_{|z|\leq (1/(1+\tau))^{M/2}}\left(\frac{\partial}{\partial z}z^{a_1}\bar{z}^{b_1}\right)
			\overline{\left(\frac{\partial}{\partial z}\bar{z}^{a_2}z^{b_2}\right)}
			d^2z&=
			\frac{ a_1 b_2}{\pi}\int_{|z|\leq (1/(1+\tau))^{M/2}}|z|^{2s-2}	
			d^2z\\
			&=
			{2 a_1b_2} \int_0^{(1/(1+\tau))^{M/2}}
			r^{2s-1}dr.
		\end{align*}
		We make the following substitution:
		\begin{align*}
		 x=&\tau \left(\frac{ r^{2/M}}{1-r^{2/M}}\right),\\
		 dx
		 =&\frac{2}{M\tau}x(\tau+x)r^{-1}dr.
		\end{align*}
		Our integral is now:
			\begin{align*}
			M \tau a_1b_2\int_{0}^1 \left(\frac{x}{\tau+x}\right)^{Ms}
			\left(\frac{1}{x(\tau+x)}\right)dx.
		\end{align*}
		To interpret the second term of the limiting covariance, one notices instead (arguing as we did to obtain (\ref{lim_var})):
		\begin{align*}
			\max(0,b_1-a_1))\left(\frac{1}
			{1+\tau} \right)^{Ms}&=
			\max(0,b_1-a_1)\left(\frac{1}
			{1+\tau} \right)^{(M/2)(a_1+b_1+a_2+b_2)}
			\\&=\frac{1}{2}
			\left<\left(z^{a_1}\bar{z}^{b_1}\right),
			\left(\bar{z}^{a_2}z^{b_2}\right)\right>
			_{H^{1/2}(|z|=(1+\tau)^{-M/2})}.
		\end{align*}
		We can conclude that the limiting variance (at least for real valued polynomials) is:
		\begin{align}
			\label{limvar_truncunit}
			\frac{1}{4\pi}\int_{|z|\leq (1/(1+\tau))^{M/2}} \left|\nabla f(z)\right|^2
			d^2z 
			+\frac{1}{2}
			\|f\|^2_{H^{1/2}(|1+\tau|^{-M/2})}.
		\end{align}
		Here we have again used $\left| \partial f(z)\right|^2=\frac{1}{4}|\nabla f(z)|^2$. This expression is as desired, and the proof of Theorem \ref{unit4mom} is finished.
		
		\section{Four Moment Universality}
		\label{4mom_sec}
		
		In this section, we prove Theorem \ref{ginibre4mom} by way of four moment universality (with the bound on the smallest singular value, Theorem \ref{linsmallsig}, playing a crucial role in the argument). The development in this section is based on an approach previously employed in \cite{PK} for independent entry matrices, which was itself based on the argument put forward in \cite{TV2}. 		
		
		We will on several occasions appeal to technical results obtained by Nemish during the course of his proof of the local $M$-fold circular law \cite{Nemish}:
		
		\begin{theorem}[Nemish]
			\label{nemish}
			 Let $f:\mathbb{C}\to \mathbb{R}$ be a fixed smooth function with compact support. Let $\lambda_1,\ldots,\lambda_{n}$ be the eigenvalues of $n^{-M/2}X^{(1)}\cdots X^{(M)}$, where each jointly independent factor $X^{(i)}$ is an $n \times n$ iid random matrix. If $|z_0|,(1-|z_0|)\geq \tau_0$ for some $\tau_0>0$, then for any $d\in(0,1/2]$:
			\begin{align}
				\left(\frac{1}{n}\sum_{j=1}^{n}f_{z_0}(\lambda_j)
				-\frac{1}{M\pi}\int_{|z|\leq 1}f_{z_0}(z)|z|^{2/M-2} \right)
				\prec n^{-1+2d}\|\Delta f\|_{1}.
			\end{align}
			Here, $f_{z_0}$ is the $n^{-d}$ rescaling of $f(z)$ around $z_0$:
			\begin{align*}
				f_{z_0}(z)=n^{2d}f(n^d(z-z_0)).
			\end{align*}
		\end{theorem}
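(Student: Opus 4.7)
The plan is to follow the standard Hermitization strategy for local circular laws, adapted to the block-linearized product structure. Concretely, I would start from Girko's formula applied to the $M$-th root eigenvalue process: writing $\mu_{z_0} = \frac{1}{Mn}\sum_{j=1}^{Mn}\delta_{\lambda_j^{1/M}}$, one has the identity
\[ \int f_{z_0}\, d\mu_{z_0} = \frac{1}{4\pi Mn}\int_{\mathbb{C}} \Delta f_{z_0}(w)\,\log\bigl|\det(Y(w)Y(w)^{*})\bigr|\, d^{2}w, \]
where $Y(w)$ is the $Mn\times Mn$ linearization from Theorem \ref{linsmallsig}. The target is then to show that $\frac{1}{Mn}\log|\det(Y(w)Y(w)^{*})|$ is close, at the scale $n^{-d}$, to its deterministic counterpart, which upon applying $\Delta$ reproduces the density $|z|^{2/M-2}/(M\pi)$.

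To control that log-determinant, I would rewrite it via the Stieltjes transform of the symmetrized matrix $H_{w} = \begin{pmatrix} 0 & Y(w) \\ Y(w)^{*} & 0 \end{pmatrix}$, namely
\[ \frac{1}{Mn}\log\bigl|\det(Y(w)Y(w)^{*})\bigr| = \frac{2}{Mn}\log\bigl|\det H_{w}\bigr| - C, \]
and then use the standard identity $\log|\det(H_{w}-i\eta)| = \mathrm{Re}\int_{\eta}^{T} \mathrm{tr}\, G_{w}(i\eta')\,d\eta' + O(1)$ for the resolvent $G_{w}(\zeta)=(H_{w}-\zeta)^{-1}$. The main ingredient is a local law: there is a deterministic $m(w,\zeta)$ (satisfying a Dyson-type fixed point equation arising from the block structure) such that, uniformly in $w$ in the bulk and in $\eta \geq n^{-1+\epsilon}$,
\[ \frac{1}{Mn}\mathrm{tr}\, G_{w}(i\eta) - m(w,i\eta) \prec \frac{1}{Mn\eta}. \]
I would prove this local law by the usual Schur complement / self-consistent equation approach: expand the diagonal resolvent entries, use concentration of quadratic forms in the subgaussian entries, and bootstrap a weak bound into a strong one via a stability analysis of the Dyson equation. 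The block pattern of $Y(w)$ forces the fixed-point system to be vector-valued, which is the principal algebraic complication, but it is tractable because the system decouples along the cyclic structure.

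The two steps I expect to be hardest are the integration in $\eta$ down to $\eta = 0$ and the comparison to the deterministic density. For the former, splitting the $\eta$-integral at $\eta_{*} = n^{-1/2-A}$ controls the upper piece via the local law, while the lower piece is where Theorem \ref{linsmallsig} is indispensable: it guarantees $\sigma_{1}(Y(w)) \gtrsim n^{-1/2-A}$ off a polynomially small exceptional set, so that $\int_{0}^{\eta_{*}} \mathrm{tr}\, G_{w}(i\eta')\, d\eta'$ contributes only $O(n^{-c})$ after integration against $\Delta f_{z_0}$. The bulk restriction $|z_{0}|, 1-|z_{0}|\geq \tau_{0}$ enters because the Dyson equation's stability, and the operator-norm bounds, both degrade near the edge and at the origin. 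Finally, identifying $-\frac{1}{\pi}\Delta_{w}\bigl(\mathrm{Re}\int m(w,i\eta')\,d\eta'\bigr)$ with $\frac{1}{M\pi}|w|^{2/M-2}\mathbf{1}_{|w|<1}$ is a direct computation from the explicit solution of the Dyson equation for the cyclic block model, which is the route by which the $M$-fold circular law density is recovered at the local scale.
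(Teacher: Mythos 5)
Theorem \ref{nemish} is quoted from \cite{Nemish}; the paper does not supply its own proof, so there is no internal argument to compare against.

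Your outline --- Girko Hermitization applied to the block linearization, a resolvent local law for $H_w$ derived from a cyclic vector-valued self-consistent (Dyson) equation, integration in $\eta$ with a cutoff enforced by a least singular value bound, and recovery of the density $|z|^{2/M-2}/(M\pi)$ from the explicit fixed point --- is structurally the same approach Nemish takes, and you have correctly disambiguated the paper's notational glitch (the sum over $Mn$ terms refers to the eigenvalues of the linearization $Y$, i.e.\ the $M$-th roots of the product eigenvalues, not the $n$ eigenvalues of the product itself). One small anachronism worth flagging: you invoke Theorem \ref{linsmallsig} to control the small-$\eta$ regime, but that estimate is a new contribution of the present paper and postdates \cite{Nemish}. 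Nemish's original argument necessarily relied on the earlier, weaker lower bound $\sigma_1(Y(w)) \geq n^{-B}$ for an unspecified constant $B$ (as in \cite{GT}, \cite{OS}); such a polynomial bound already suffices to close the $\eta$-integration at any scale $d \leq 1/2$ after testing against $\Delta f_{z_0}$, so the essentially optimal exponent in Theorem \ref{linsmallsig} is not needed here --- it is used elsewhere in the paper, in the four-moment comparison and the correlation-function universality, where the numerical quality of the exponent does matter.
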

		
		Similar local law results for independent-entry matrices have also been obtained in \cite{BYY, BYY2, EKJ, TV2, Ylc}.  The notation $A\prec B$ appearing in the local $M$-fold circular law denotes stochastic domination:
		
		\begin{definition}
			Let $A_n, B_n\in\mathbb{C}$ be two sequences of random variables for $n\in\mathbb{N}$. The sequence $B_n$ is said to stochastically dominate $A_n$ (written $A_n\prec B_n$) if, for any $\epsilon>0$ and $D>0$, the following holds:
			\begin{align}
				\mathbf{P} \left\{|A_n|\geq n^\epsilon |B_n| \right\} \leq C_{D}n^{-D} .
			\end{align}
		\end{definition}
		
		This explains the presence of the spectral bulk condition $\tau_0<|z|<1-\tau_0$ in Theorem \ref{ginibre4mom}; if the arguments in \cite{Nemish} could be extended to the spectral edge then our argument would be extended to the spectral edge as well. Notice that this is asking more than the extension of the \textit{result} in \cite{Nemish}; indeed this has recently been accomplished at the origin in \cite{GNT}, but the arguments used do not appear to translate into the context of the argument presented here.
		
		\subsection{Overview of the Argument}
		\label{girkooverview}
		
		Recall that the linearization matrix of the product $n^{-M/2}X^{(1)}\dotsm X^{(M)}$, denoted $Y$, is defined as follows: 
		\[ Y = \frac{1}{\sqrt{n}} \left( \begin{array}{ccccc}
			0 & X^{(1)} &  0 & \cdots & 0 \\
			0 & 0 & X^{(2)} & \cdots & 0 \\
			\vdots & \vdots & \vdots & \ddots & \vdots \\
			0 & \cdots & 0 & 0 & X^{(M-1)} \\
			X^{(M)} & 0 & \cdots & 0 & 0 
			\end{array} \right).\]
				
		For $\beta=1,2$, let $Y^{(\beta)}$ be the linearization matrix associated with the product $n^{-M/2}X^{(\beta,1)} \cdots X^{(\beta, M)}$ (with factor matrices obeying the assumptions in the statement of Theorem \ref{linlinstatthm} below). For any choice of complex $z$ in the spectral bulk (defined as in the statement of Theorem \ref{linlinstatthm}), define $Y^{(\beta)}(z)=Y^{(\beta)}-zI$, and define also:
		\begin{align}
			\label{wbz}
		W^{(\beta)}(z)= \left( \begin{array}{cc}
		0 & Y^{(\beta)}(z) \\
		\left(Y^{(\beta)}(z)\right)^{*} & 0 
		\end{array} \right). 
		\end{align}
		We let $\lambda_j^{(\beta)}(z)$, for $1\leq j \leq 2Mn$, denote the eigenvalues of $W^{(\beta)}(z)$. 
			
		Our argument will rest in large part on the classical Girko Hermitization trick, which we will use to get around various complications which stem from the failure of Hermiticity. The trick relies on the following identity for twice continuously differentiable test functions with compact support:
		\begin{align}
			f(\lambda)=\frac{1}{2\pi}\int_{\mathbb{C}}\Delta f(z)\log|\lambda-z|d^2z.
		\end{align}
		If $\iota_{1},\ldots,\iota_{Mn}$ are the eigenvalues of $Y^{(\beta)}$, then this formula becomes:
		\begin{align*}
			\sum_{j=1}^{Mn}f(\iota_j)&
			=\frac{1}{2\pi}\int_{\mathbb{C}}\Delta f(z)\log |\det(Y^{(\beta)}-z)|d^2z
			\\&=\frac{1}{4\pi}\int_{\mathbb{C}}\Delta f(z)\log |\det W^{(\beta)}(z)|d^2z.
		\end{align*}
		The upshot here is that the matrix $W^{(\beta)}(z)$ is Hermitian, and can therefore be analyzed by the tools of Hermitian random matrix theory. The cost we have incurred is the presence of the integral, which we will need to deal with. Using this formulation, we prove the following four moment universality result for the linear statistics of the linearization matrix.  
		\begin{theorem}
			\label{linlinstatthm}
			Suppose $Y^{(1)}$ is the linearization matrix associated with the product matrix $n^{-M/2}X^{(1,1)}\dotsm X^{(1,M)}$  and $Y^{(2)}$ is the linearization matrix associated with the product $n^{-M/2}X^{(2,1)}\dotsm X^{(2,M)}$ (where all factor matrices are mutually independent $n$ by $n$ iid matrices), and suppose that the atom distributions of the factors $X^{(1,i)}$ and $X^{(2,i)}$ match to four moments for $1\leq i \leq M$.  Let $f:\mathbb{C} \to \mathbb{R}$ be a fixed function with two continuous derivatives, supported in the spectral bulk $\tau_0<|z|<1-\tau_0$ for some fixed $\tau_0>0$.  If the linear statistic generated by the eigenvalues of $Y^{(1)}$ and $f$, denoted $N^{(1)}_n[f]$, converges in distribution to some limiting distribution $\chi$, then the linear statistic  generated by the eigenvalues of $Y^{(2)}$ and $f$, $N^{(2)}_n[f]$, necessarily converges in distribution to $\chi$ as well.
		\end{theorem}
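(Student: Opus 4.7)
The plan is to reduce the distributional comparison to an entry-by-entry swapping argument, after passing from the non-Hermitian product matrices to the Hermitian linearizations $W^{(\beta)}(z)$ via the Girko Hermitization identity. Convergence in distribution to $\chi$ is equivalent to $\mathbf{E}[G(N_n^{(\beta)}[f])] \to \mathbf{E}[G(\chi)]$ for every smooth, compactly supported $G : \mathbb{R} \to \mathbb{R}$, so it suffices to prove that $\mathbf{E}[G(N_n^{(1)}[f])] - \mathbf{E}[G(N_n^{(2)}[f])] \to 0$. Using the Girko formula from Section~\ref{girkooverview},
\[ N_n^{(\beta)}[f] = \frac{1}{4\pi} \int_{\mathbb{C}} \Delta f(z) \left( \log|\det W^{(\beta)}(z)| - \mathbf{E} \log|\det W^{(\beta)}(z)| \right) d^2 z, \]
and since $f$ is supported in the bulk annulus $\tau_0 < |z| < 1 - \tau_0$, the integration variable $z$ ranges over a compact set bounded away from both the origin and the spectral edge.

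The second step is a Lindeberg-style swapping argument in the spirit of \cite{TV2,PK}. Enumerate the $Mn^2$ entries of the factor matrices in some order and interpolate from $(X^{(1,1)},\ldots,X^{(1,M)})$ to $(X^{(2,1)},\ldots,X^{(2,M)})$ one entry at a time, producing a chain of $Mn^2 + 1$ intermediate product ensembles. For each swap, I would Taylor expand $\mathbf{E}[G(N_n[f])]$ to fifth order in the swapped entry. The four-moment matching hypothesis makes the zeroth through fourth order contributions cancel upon taking expectation, so each swap is governed by a fifth-order remainder. To close the argument it then suffices to show each swap contributes $o(n^{-2})$, giving a telescoped total error of $o(1)$.

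The per-swap estimate is reduced to bounding products of entries of the resolvent $W^{(\beta)}(z)^{-1}$: differentiating $\log|\det W^{(\beta)}(z)|$ with respect to a single entry of a factor matrix, via Jacobi's formula and the $n^{-1/2}$ normalization, yields an integral against $\Delta f(z)$ of a sum of products of at most $k$ resolvent entries at the $k$-th derivative. I would then partition the $z$-domain according to whether $\sigma_1(Y^{(\beta)}(z)) \geq n^{-1/2 - A}$ for a small constant $A > 0$ depending on $M$ and $G$. On the exceptional set, Theorem~\ref{linsmallsig} gives probability at most $C n^{-KA}$, which is negligible once multiplied against the crude polynomial bound coming from the operator norm. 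On the main set, an isotropic local law for $W^{(\beta)}(z)$ in the spirit of Theorem~\ref{nemish} bounds individual resolvent entries by $n^{o(1)}$ with overwhelming probability throughout the bulk, leading to a fifth-order remainder of size $n^{-5/2 + o(1)}$ per swap, whose $O(Mn^2)$-fold sum is indeed $o(1)$.

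The principal obstacle is producing a resolvent estimate sharp enough to close the sum. Using only the operator-norm bound $\|W^{(\beta)}(z)^{-1}\| \lesssim n^{1/2 + A}$ supplied by Theorem~\ref{linsmallsig} is hopeless: five such factors produce $n^{5/2 + 5A}$ per swap, overwhelming the $n^{-5/2}$ normalization after summing over $n^2$ entries. One must therefore separate the typical entrywise scale of the resolvent (of order $n^{o(1)}$ in the bulk) from the rare-event operator-norm tail (controlled by Theorem~\ref{linsmallsig}), using the latter only as a deterministic safety net on an event of tiny probability. The delicate technical point is verifying an entrywise local law for $W^{(\beta)}(z)$ that is uniform in $z$ over the support of $\Delta f$, especially near the inner edge of the spectrum of $W^{(\beta)}(z)$ where the effective spectral parameter becomes small; this is where the assumption that $f$ is supported strictly inside the spectral bulk enters essentially, allowing one to keep the associated $\eta$-scale macroscopic. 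Calibrating the exponent $A$ against the fifth-order remainder and tracking polynomial losses through the swap count then completes the argument.
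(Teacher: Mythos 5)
Your proposal follows the same high-level blueprint as the paper: Girko Hermitization to pass to the Hermitian linearization $W^{(\beta)}(z)$, a Lindeberg entry-by-entry swap exploiting four-moment matching, Theorem~\ref{linsmallsig} as a tail bound against small singular values, and a local law (the entries of the resolvent bounded by $n^{o(1)}$) on the typical event. Those are exactly the ingredients the paper deploys, and your bookkeeping for the fifth-order remainder ($n^{-5/2+o(1)}$ per swap, $Mn^2$ swaps) matches the paper's accounting.

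The genuine gap is how you handle the $z$-integral coming out of the Hermitization identity. You write ``Taylor expand $\mathbf{E}[G(N_n[f])]$ to fifth order in the swapped entry,'' but $N_n[f]$ is itself an integral over $z$ of $\log|\det W^{(\beta)}(z)|$, so the Taylor coefficients are $z$-integrals of resolvent traces, and a rigorous treatment requires either (i) exchanging expectation, Taylor expansion and the $z$-integral with uniform-in-$z$ control over the resolvent and the exceptional set where $\sigma_1(Y^{(\beta)}(z))$ is small, or (ii) first replacing the integral by a finite-dimensional proxy. Your proposal opts for (i) and flags the need for a $z$-uniform entrywise law, but does not supply the mechanism that makes the swap well-posed under the integral sign; in particular, near the rare $z$ where $W^{(\beta)}(z)$ is nearly singular, $\log|\det W^{(\beta)}(z)|$ and its entrywise derivatives blow up, and one has to argue that these $z$ make a negligible contribution after integrating against $\Delta f$ --- a non-trivial interchange issue. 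The paper sidesteps this via option (ii): it uses the variance bound \eqref{varestmate} (from Nemish's local $M$-fold circular law) together with the Monte Carlo sampling lemma (Lemma~\ref{p1}) to replace the $z$-integral with a \emph{stochastic Riemann sum} over $K=O(n^{k_0})$ random sample points, then conditions on these points and applies the resolvent-swapping lemma (Lemma~\ref{p2}) at each $z_i$ separately, closing the argument with a union bound over the $K$ points. This discretization is the distinctive technical device in the proof, and it is what turns the heuristic sketch you give into a complete argument; without it (or an explicit substitute giving the interchange), the step from ``each swap contributes $o(n^{-2})$'' to the telescoped conclusion is not justified.
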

		
		We can connect this linear statistic with the linear statistic of the product matrix $n^{-M/2} X^{(\beta,1)}\cdots X^{(\beta,M)}$ (with eigenvalues $\mu_1,\ldots,\mu_n$) through the formula (see for instance \cite{OS} for a complete derivation)
		\begin{align}
			\sum_{j=1}^{n}f(\mu_j)=\sum_{j=1}^{Mn}\left[\frac{1}{M} f(\iota_j^{M})\right],
		\end{align}
		where $\iota_1, \ldots, \iota_{Mn}$ are the eigenvalues of the linearization matrix of the product  $n^{-M/2} X^{(\beta,1)}\cdots X^{(\beta,M)}$.  
		Applying universality to the test function $M^{-1}f(z^{M})$ immediately extends fourth moment universality for linear statistics of linearization matrices to fourth moment universality for linear statistics of product matrices. Theorem \ref{linlinstatthm} then implies the following corollary:
		
		\begin{theorem}
			\label{fmlthm}
			Suppose that the product matrices $X^{(1)}=n^{-M/2}X^{(1,1)}\dotsm X^{(1,M)}$ and  $X^{(2)}=n^{-M/2}X^{(2,1)}\dotsm X^{(2,M)}$ are both products of mutually independent $n$ by $n$ iid random matrices, and suppose that the atom distributions of the factors $X^{(1,i)}$ and $X^{(2,i)}$ match to four moments for $1\leq i \leq M$.  Let $f:\mathbb{C} \to \mathbb{R}$ be a fixed function with two continuous derivatives, supported in the spectral bulk $\tau_0<|z|<1-\tau_0$ for some fixed $\tau_0>0$. If the linear statistic generated by the eigenvalues of $X^{(1)}$ and $f$, denoted $N^{(1)}_n[f]$, converges in distribution to some limiting distribution $\chi$, then the linear statistic generated by the eigenvalues of $X^{(2)}$ and $f$, $N^{(2)}_n[f]$, converges in distribution to $\chi$ as well.
		\end{theorem}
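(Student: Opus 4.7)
The plan is to derive Theorem \ref{fmlthm} directly from Theorem \ref{linlinstatthm} by applying the latter to a carefully chosen auxiliary test function. Define $g : \mathbb{C} \to \mathbb{R}$ by $g(z) = \frac{1}{M} f(z^M)$. Writing $\mu_1, \ldots, \mu_n$ for the eigenvalues of the product matrix $n^{-M/2} X^{(\beta, 1)} \cdots X^{(\beta, M)}$ and $\iota_1, \ldots, \iota_{Mn}$ for the eigenvalues of its linearization $Y^{(\beta)}$, the Burda--Janik--Waclaw observation that each $\mu_k$ arises as $\iota_j^M$ with multiplicity $M$ yields the identity
\begin{align*}
\sum_{j=1}^n f(\mu_j) = \frac{1}{M} \sum_{j=1}^{Mn} f(\iota_j^M) = \sum_{j=1}^{Mn} g(\iota_j).
\end{align*}
Centering, this gives $N^{(\beta)}_n[f] = \tilde{N}^{(\beta)}_n[g]$, where $\tilde{N}^{(\beta)}_n[g]$ denotes the centered linear statistic built from the spectrum of the linearization $Y^{(\beta)}$ and the test function $g$.

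The second task is to verify that $g$ satisfies the hypotheses of Theorem \ref{linlinstatthm}. Since $z \mapsto z^M$ is a smooth map of $\mathbb{C}$ to itself and $f$ has two continuous derivatives by assumption, the composition $g(z) = \frac{1}{M} f(z^M)$ also has two continuous derivatives. Moreover, if $f$ is supported in the annulus $\tau_0 < |w| < 1 - \tau_0$, then $g$ is supported in the annulus $\tau_0^{1/M} < |z| < (1-\tau_0)^{1/M}$, which sits inside a spectral bulk of the form $\tau_0' < |z| < 1 - \tau_0'$ for some $\tau_0' > 0$ depending only on $\tau_0$ and $M$. Hence $g$ is an admissible test function for Theorem \ref{linlinstatthm}. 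The assumption that the factors $X^{(1,i)}$ and $X^{(2,i)}$ match to four moments passes through to the factors of the linearizations $Y^{(1)}$ and $Y^{(2)}$ verbatim.

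Combining these two observations, we apply Theorem \ref{linlinstatthm} with the test function $g$: the hypothesis that $N^{(1)}_n[f] = \tilde{N}^{(1)}_n[g]$ converges in distribution to $\chi$ yields that $\tilde{N}^{(2)}_n[g] = N^{(2)}_n[f]$ also converges in distribution to $\chi$. There is no substantive obstacle here once Theorem \ref{linlinstatthm} is in hand, as all the difficulty has been packaged into the linearization theorem; the only verification required is the smoothness and bulk-support check on $g$, which is routine. This completes the reduction.
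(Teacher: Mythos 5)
Your proposal is correct and takes essentially the same approach as the paper: the paper reduces Theorem \ref{fmlthm} to Theorem \ref{linlinstatthm} by applying the latter to the test function $M^{-1}f(z^M)$ via the identity $\sum_{j=1}^n f(\mu_j) = \sum_{j=1}^{Mn} M^{-1} f(\iota_j^M)$ linking eigenvalues of the product to eigenvalues of the linearization. Your verification of the smoothness and bulk-support hypotheses for $g(z) = M^{-1}f(z^M)$ is accurate and fills in the routine checks the paper leaves implicit.
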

		
		Combining Theorem \ref{fmlthm} with the Gaussian result, Theorem \ref{cltforginibre}, one immediately obtains Theorem \ref{ginibre4mom}, which is our objective. It remains, then, to prove Theorem \ref{linlinstatthm}, which the rest of this section is dedicated.  Throughout the proof, we will assume that all matrices under consideration feature exclusively real entries; the same proof goes forward for matrices with complex entries but with slightly more cumbersome notations.
					
		\subsection{Preliminaries}
			
		Here we collect some preliminary results and definitions which we will need in order to prove Theorem \ref{linlinstatthm}. Define an \textit{elementary matrix} to be a Hermitian matrix featuring one or at most two entries equal to 1, and all the other entries set to zero.  Therefore, adding a multiple of an elementary matrix to a Hermitian matrix $H$ changes either a single diagonal entry or two conjugate off-diagonal entries of $H$, and leaves the other entries undisturbed.
			
		For an $n \times n$ Hermitian matrix $H$ and an elementary matrix $V$, define:
			\begin{align}
				H_t&=H+\frac{1}{\sqrt{n}}tV,\\
				R_0(\zeta)&=(H-\zeta)^{-1},\\
				R_t(\zeta)&=(H_t-\zeta)^{-1},\\
				s_t(\zeta)&=\frac{1}{n}\mbox{Tr}R_t(\zeta).
			\end{align}
		We will also need to define an appropriate matrix norm:
			\begin{align*}
				\|A\|_{(\infty,1)}=\max_{1\leq i,j\leq n}|A_{ij}|.
			\end{align*}
		The following Taylor expansion type lemma is due to Tao and Vu (see Proposition 13 in \cite{TVWig}), and is proven by iterating the classical resolvent identity:
			\begin{lemma}\label{p2}
				Let $H$ be a Hermitian matrix, $V$ an elementary matrix, $t$ and $E$ real numbers, and $\eta >0$.  Take $\zeta = E + \sqrt{-1} \eta$. Let $k \geq 0$ be fixed.  Suppose we have:
				\begin{align*}
					|t|\times\|R_0(\zeta)\|_{(\infty,1)}=o(\sqrt{n}).
				\end{align*}
				Then we have the following Taylor expansion to order $k$ of the quantity $s_t(\zeta)$:
				\begin{align*}
					s_0+\sum_{j=1}^k n^{-j/2}c_jt^j+O\left(n^{-(k+1)/2}|t|^{k+1}
					\|R_0(\zeta)\|^{k+1}_{(\infty,1)}
					\min(\|R_0(\zeta)\|_{(\infty,1)},\frac{1}{n\eta})
					\right).
				\end{align*}
				The coefficients $c_j$ are independent of $t$ and obey the following estimate:
				\begin{align*}|c_j|\ll
					\|R_0(\zeta)\|_{(\infty,1)}^j\min \left(\|R_0(\zeta)\|_{(\infty,1)},\frac{1}{n\eta} \right).
				\end{align*}
			\end{lemma}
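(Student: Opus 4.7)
The plan is to iterate the second resolvent identity $k+1$ times, identify the coefficients $c_j$ and the error as traces of explicit rank-at-most-two expressions, and then bound each term using that $V$ has $O(1)$ nonzero entries. Starting from $H_t - H = \frac{t}{\sqrt n} V$, the resolvent identity gives $R_t = R_0 - \frac{t}{\sqrt n} R_0 V R_t$, and iterating yields
\[
R_t \;=\; \sum_{j=0}^{k} \Bigl(-\tfrac{t}{\sqrt n}\Bigr)^{j} (R_0 V)^{j} R_0 \;+\; \Bigl(-\tfrac{t}{\sqrt n}\Bigr)^{k+1} (R_0 V)^{k+1} R_t.
\]
Taking $\frac{1}{n}\mathrm{Tr}$ of both sides puts $s_t(\zeta)$ into the form claimed, with $c_j = (-1)^j \frac{1}{n}\mathrm{Tr}((R_0 V)^{j} R_0)$ and the error given by the last term.

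Next I bound the $c_j$. Since $V$ is elementary I can write $V = \sum_\ell e_{a_\ell} e_{b_\ell}^T$ as a sum of one or two rank-one terms, so $R_0 V$ is a sum of $O(1)$ rank-one operators. Expanding $(R_0 V)^{j} R_0$ produces an $O_j(1)$-indexed sum of rank-one operators, each of which is a product of $j-1$ scalar entries of $R_0$ times a column/row outer product $(R_0 e_{a_\bullet})(e_{b_\bullet}^T R_0)$. Trace collapses that outer product to a single entry of $R_0^2$, so $\frac{1}{n}\mathrm{Tr}((R_0 V)^j R_0)$ is an $O_j(1)$-sum of terms of the form $\tfrac{1}{n}(R_0)_{\ast}\cdots(R_0)_{\ast}(R_0^2)_{\ast}$ with exactly $j-1$ single-entry factors. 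Each entry of $R_0$ is bounded by $\|R_0\|_{(\infty,1)}$, while entries of $R_0^2$ enjoy two competing bounds: the trivial $|(R_0^2)_{ij}| \leq n\|R_0\|_{(\infty,1)}^{2}$ coming from summing entrywise, and the spectral bound $|(R_0^2)_{ij}|^{2} \leq (R_0 R_0^*)_{ii}(R_0 R_0^*)_{jj}$ from Cauchy--Schwarz, which via the Hermitian identity $(R_0 R_0^*)_{ii} = \mathrm{Im}(R_0)_{ii}/\eta \leq \|R_0\|_{(\infty,1)}/\eta$ gives $|(R_0^2)_{ij}| \leq \|R_0\|_{(\infty,1)}/\eta$. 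Dividing by $n$ and taking the minimum of the two delivers $|c_j| \ll \|R_0\|_{(\infty,1)}^{j}\min(\|R_0\|_{(\infty,1)}, 1/(n\eta))$.

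The error term is handled in the same spirit. The rank-one expansion identifies $(R_0 V)^{k+1} R_t$ as an $O_k(1)$-sum of rank-one operators, and $\frac{1}{n}\mathrm{Tr}$ picks out an entry of $R_t R_0$ (or $R_0 R_t$) in place of $R_0^2$. To bound that factor I invoke the hypothesis $|t|\|R_0\|_{(\infty,1)} = o(\sqrt n)$: because $V$ has rank at most two, the update $H \mapsto H_t$ is a rank-two perturbation whose ``local'' inversion at the $O(1)$ indices in the support of $V$ follows from Sherman--Morrison. Under the smallness hypothesis the denominator in that identity is bounded away from zero, which lets me express the relevant entries of $R_t$ as perturbations of the corresponding entries of $R_0$, so that $R_t$ inherits both of the $(R_0^2)$-type bounds used above. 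The resulting estimate matches the claimed error of order $n^{-(k+1)/2}|t|^{k+1}\|R_0\|_{(\infty,1)}^{k+1}\min(\|R_0\|_{(\infty,1)}, 1/(n\eta))$.

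The main technical nuisance is the $\min$ refinement: the naive entrywise bound on $(R_0^2)_{ij}$ loses an unnecessary factor of $n$, so one must trade it for the factor of $1/\eta$ supplied by the Hermitian identity $(R_0 R_0^*)_{ii} = \mathrm{Im}(R_0)_{ii}/\eta$. Propagating this dichotomy through every factor in the iterated expansion, and carrying it through to $R_t$ via the smallness hypothesis on $|t|\|R_0\|_{(\infty,1)}$, is the delicate bookkeeping that makes the estimate sharp enough to be useful when it is later applied to trace-stability comparisons between $Y^{(1)}$ and $Y^{(2)}$.
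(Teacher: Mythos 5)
The paper does not prove Lemma~\ref{p2}; it cites it as Proposition~13 of \cite{TVWig} and remarks only that it is proven by iterating the classical resolvent identity. Your proposal correctly reconstructs that Tao--Vu argument: iterating the resolvent identity, exploiting the rank-at-most-two structure of $V$ to reduce each trace to $O(1)$ resolvent entries, combining the trivial entrywise bound with the Ward identity $(R_0R_0^*)_{ii} = \mathrm{Im}(R_0)_{ii}/\eta$ to obtain the $\min$ refinement, and transferring these estimates to $R_t$ via the smallness hypothesis $|t|\,\|R_0\|_{(\infty,1)} = o(\sqrt{n})$.
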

		 We will also need the following Monte Carlo sampling lemma (see Lemma 6.1 in \cite{TVPol}), a consequence of Chebyshev's inequality.  
		\begin{lemma}\label{p1}
		Let $(X,\mu)$ be a probability space and $F$ a square integrable function from $(X,\mu)$ to the real line. For $m$ independent $x_i$, distributed according to $\mu$,  define the empirical average:
		\begin{align*}
		S_m=\frac{1}{m}\sum_{i=1}^m F(x_i).
		\end{align*}
		Then for any $\delta >0$ the following estimate holds with probability at least $1-\delta$:
		\begin{align}
		\left|S_m-\int_X F d\mu \right| \leq \frac{1}{\sqrt{\delta m}}\left(\int_X (F-\int_X Fd\mu)^2d\mu\right)^{1/2}.
		\end{align}
		\end{lemma}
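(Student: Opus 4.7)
The plan is to recognize this as a standard Chebyshev-type concentration bound on the empirical mean of i.i.d.\ samples, and to prove it directly in two short steps. First I would compute the first two moments of $S_m$ using the independence of the $x_i$: by linearity of expectation, $\mathbf{E}[S_m] = \int_X F\,d\mu$, and by the fact that the $F(x_i)$ are i.i.d.\ with common variance $\sigma^2 := \int_X (F - \int_X F\,d\mu)^2\,d\mu$, the variance of the empirical average satisfies $\mathrm{Var}(S_m) = \sigma^2 / m$.

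Next I would apply Chebyshev's inequality to $S_m$. For any $\lambda > 0$,
\[
\mathbf{P}\!\left\{\left|S_m - \int_X F\,d\mu\right| > \lambda\right\} \;\leq\; \frac{\mathrm{Var}(S_m)}{\lambda^2} \;=\; \frac{\sigma^2}{m\lambda^2}.
\]
Setting $\lambda = \sigma / \sqrt{\delta m}$ forces the right-hand side to equal $\delta$, which rearranges precisely to the stated bound: with probability at least $1 - \delta$,
\[
\left|S_m - \int_X F\,d\mu\right| \;\leq\; \frac{1}{\sqrt{\delta m}}\left(\int_X \Big(F - \int_X F\,d\mu\Big)^{\!2} d\mu\right)^{\!1/2}.
\]

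There is no real obstacle here; the only item requiring any care is confirming that $F$ being square-integrable is exactly what is needed to make $\sigma^2$ finite and to justify the variance computation for the sum. The result is a completely standard piece of probabilistic sampling machinery and the proof is essentially two lines once Chebyshev's inequality is invoked.
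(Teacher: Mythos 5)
Your proof is correct and is exactly the approach the paper has in mind: the paper does not supply its own proof but simply cites this as Lemma 6.1 of \cite{TVPol}, noting it is a consequence of Chebyshev's inequality, which is precisely the two-moment-plus-Chebyshev argument you give.
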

			
		We will require the following technical lemma, which will take the place of Propositions 29 and 31 in \cite{TV2}:
			\begin{lemma}
				\label{techlem}
				Let $N^{(\beta)}_{I}=\emph{Card}\left\{i, 
				\lambda^{(\beta)}_i\in I\right\}$ be the counting function of the number of eigenvalues in an interval $I$ of $W^{(\beta)}(z)$, let $R_0(\zeta)$ denote the matrix $(W^{(\beta)}(z)-\zeta)^{-1}$ for any $\zeta\in\mathbb{C}$, and suppose that $\tau_0\leq |z| \leq 1-\tau_0$ for some fixed $\tau_0 > 0$. Then we have, uniformly and with overwhelming probability, the following bounds:
				\begin{align}
					\label{techlem1}
					N_{I}\leq n^{o(1)}(1+Mn|I|)
				\end{align}
				for all intervals $I$ (where $|I|$ here denotes the length of the interval $I$) and
				\begin{align}
					\label{techlem2}
					\left|R_0(\sqrt{-1}\eta)_{i,j}\right|\leq n^{o(1)}\left(1+\frac{1}{Mn\eta}\right) 
				\end{align} 
				for all $\eta > 1/n$ and $1\leq i,j \leq 2Mn$.  In addition, for any sufficiently small constant $c_0$, there exists an event which holds with probability at least $1 - O(n^{-\Omega(c_0) + o(1)})$ such that conditioned on this event, 
				\begin{equation} \label{eq:uniformetabnd}
					\sup_{\eta > 0} \|R_0(\sqrt{-1} \eta) \|_{(\infty, 1)} \leq O(n^{O(c_0)})
				\end{equation}
				with overwhelming probability.  
			\end{lemma}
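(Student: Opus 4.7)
The plan is to separate the three parts of Lemma \ref{techlem} according to the scale of $\eta$. The counting bound \eqref{techlem1} and the mesoscopic Green function bound \eqref{techlem2} will follow from Nemish's local $M$-fold circular law (Theorem \ref{nemish}), while the uniform-in-$\eta$ estimate \eqref{eq:uniformetabnd} requires the smallest singular value input from Theorem \ref{linsmallsig}.

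For \eqref{techlem1}, I would apply Theorem \ref{nemish} to smooth approximations of indicators of intervals. The nonzero eigenvalues of $W^{(\beta)}(z)$ come in pairs $\pm \sigma_j(Y^{(\beta)}(z))$, so the counting function of $W^{(\beta)}(z)$ reduces to controlling singular values of $Y^{(\beta)}(z)$; this is precisely what Girko Hermitization of the local $M$-fold circular law describes in the spectral bulk. A Helffer-Sj\"ostrand type calculation, smoothing at scale $1/(Mn)$, then yields $N_I \leq n^{o(1)}(1+Mn|I|)$ uniformly. For \eqref{techlem2}, I would use an isotropic form of the local law for $W^{(\beta)}(z)$, which can be obtained by adapting the Schur-complement computations in \cite{Nemish}. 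Combining the spectral decomposition $R_0(\sqrt{-1}\eta)_{ij} = \sum_k v_k(i)\overline{v_k(j)} / (\lambda_k - \sqrt{-1}\eta)$, the counting estimate \eqref{techlem1}, and the delocalization bound $\max_i |v_k(i)|^2 \leq n^{-1+o(1)}$ (another consequence of the isotropic local law) yields $|R_0(\sqrt{-1}\eta)_{ij}| \leq n^{o(1)}(1 + (Mn\eta)^{-1})$ for all $\eta > 1/n$.

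For \eqref{eq:uniformetabnd}, the new input is Theorem \ref{linsmallsig}. Since $Y^{(\beta)}(z) = \frac{1}{\sqrt{n}}(Y^{(\beta)}_{\mathrm{un}} - z\sqrt{n}\, I)$ with $|z\sqrt{n}|$ in the admissible range of Theorem \ref{linsmallsig}, applying that theorem with $A = c_0$ and rescaling yields $\sigma_1(Y^{(\beta)}(z)) \geq n^{-1-c_0}$ on an event of probability at least $1 - Cn^{-Kc_0}$; this is the conditioning event. On this event, combining the spectral decomposition with eigenvector delocalization gives, for any $\eta > 0$,
\[
|R_0(\sqrt{-1}\eta)_{ij}| \leq n^{-1+o(1)} \sum_k \frac{1}{|\lambda_k|+\eta} \leq n^{-1+o(1)} \int_{n^{-1-c_0}}^{O(1)} \frac{N_{(0,r)}}{r^2}\, dr \ll n^{c_0 + o(1)},
\]
where the middle step follows from the layer-cake formula together with the smallest singular value bound, and the last step from \eqref{techlem1}.

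The principal obstacle is establishing entrywise eigenvector delocalization for the Hermitization $W^{(\beta)}(z)$: the product structure destroys the independence of the entries of $Y^{(\beta)}(z)$, so the standard delocalization arguments for single iid matrices do not transfer directly. One must extend Nemish's Schur-complement analysis of the self-consistent equation to the isotropic setting, exploiting block-level structural cancellations in the spirit of Alt-Erd\H{o}s-Kr\"uger. Once that input is in hand, the probabilistic bookkeeping in the three claims is routine and the conditioning event for the uniform bound is precisely the event supplied by Theorem \ref{linsmallsig}.
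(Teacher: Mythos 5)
Your proposal has a genuine gap at \eqref{techlem2}, and it stems from not realizing that the required entrywise resolvent bound is already in the literature. You set out to derive $|R_0(\sqrt{-1}\eta)_{ij}| \leq n^{o(1)}(1 + (Mn\eta)^{-1})$ from scratch via an isotropic local law and eigenvector delocalization for the Hermitization $W^{(\beta)}(z)$, and you correctly flag that establishing such delocalization (given the dependence structure of the linearization's entries) is ``the principal obstacle'' requiring a serious extension of Nemish's self-consistent-equation analysis. But the paper needs no such extension: the entrywise bound \eqref{techlem2} is already part of Nemish's proof of the local $M$-fold circular law --- specifically, item (ii) in the proof of Lemma 17 of \cite{Nemish} gives exactly the resolvent entry control needed, at exactly the $\eta > 1/n$ scales required. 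So the obstacle you identified is real for your proposed route, but it is circumvented entirely by quoting the right intermediate result in \cite{Nemish} rather than the final Theorem \ref{nemish}. Your route would constitute a new research contribution, not a proof that is available to you.

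A secondary issue concerns \eqref{techlem1}. You appeal to Theorem \ref{nemish} (as stated) together with smooth approximations of interval indicators and a Helffer--Sj\"ostrand smoothing, but Theorem \ref{nemish} is a two-dimensional local law for the eigenvalues of the product matrix in $\mathbb{C}$; the counting function $N_I$ counts the real eigenvalues of the Hermitian matrix $W^{(\beta)}(z)$ in a real interval $I$, and controlling that requires a one-dimensional eigenvalue rigidity estimate for the Hermitization, not the 2D statement. The paper's proof correctly invokes the eigenvalue rigidity mechanism (obtaining (26) from Theorem 5 in \cite{Nemish}, cf.\ Lemma 5.1 in \cite{BYY}) internal to Nemish's proof, which is the Hermitian rigidity result that actually yields \eqref{techlem1}. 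Your passing remark about $\pm\sigma_j$ pairs suggests you sense this, but the argument as written conflates the 2D local law with the 1D rigidity it is derived from. The treatment of \eqref{eq:uniformetabnd} (singular value lower bound from Theorem \ref{linsmallsig} plus a layer-cake integration against $N_{(0,r)}$) is sound and is essentially the argument in \cite[Lemma 46]{TV2} that the paper cites.
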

			
		\begin{proof}
			
		Both \eqref{techlem1} and \eqref{techlem2} are immediate from the proof of the local $M$-fold circular law \cite{Nemish}. First, (\ref{techlem1}) follows from the fact that the number of classical eigenvalue locations of $W^{(\beta)}(z)$ in an interval is proportional to the length of the interval, and from the eigenvalue rigidity argument which was used to obtain (26) from Theorem 5 in \cite{Nemish} (see also Lemma 5.1 in \cite{BYY}). Second, (\ref{techlem2}) follows immediately from (ii) in the proof of Lemma 17 in \cite{Nemish}.
		
		\eqref{eq:uniformetabnd} follows from the proof of Lemma 46 in \cite{TV2}, where instead of applying Proposition 31 from \cite{TV2}, one applies \eqref{techlem2}.  We omit the details.  
		\end{proof}
			
		\subsection{Proof of Theorem \ref{linlinstatthm}}
		In this section, we will prove Theorem \ref{linlinstatthm} by way of the following, somewhat more general, result.
			
		\begin{theorem}
		\label{4moments}
		Let $G: \mathbb{R} \to \mathbb{C}$ be any smooth function with five bounded derivatives, and let $f:\mathbb{C} \to \mathbb{R}$ be a function with two continuous derivatives supported in the spectral bulk $\tau_0<|z|<1-\tau_0$ for some fixed $\tau_0>0$.  Let $N^{(\beta)}_n[f]$ (for $\beta=1,2$) denote the linear statistics of the linearization matrices corresponding to two products of $M$ independent $n$ by $n$ iid random matrices: $n^{-M/2} X^{(1,1)}\dotsm X^{(1,M)}$ and $n^{-M/2} X^{(2,1)}\dotsm X^{(2,M)}$. Assume furthermore that the entry distributions of $X^{(1,i)}$ match the entry distributions of $X^{(2,i)}$ to four moments for $1\leq i\leq M$.  Then there exist constants $C,A>0$ such that:
		\begin{align}
		\left|\mathbf{E}G(N^{(1)}_n[f])
		-\mathbf{E}G(N^{(2)}_n[f]) \right|	
			\leq C n^{-A}.
		\end{align}
		\end{theorem}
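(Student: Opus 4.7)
The plan is to apply a Lindeberg-style four moment swapping argument in the spirit of Tao--Vu \cite{TV2}, using Girko's Hermitization trick to reduce the complex-eigenvalue problem to a family of Hermitian ones indexed by $z \in \mathbb{C}$, with Theorem \ref{linsmallsig} supplying the crucial control over the logarithmic singularity that Hermitization introduces. First, as in Section \ref{girkooverview}, I rewrite $\sum_j f(\mu_j)$ as $\sum_j \tilde f(\iota_j)$ for the eigenvalues $\iota_j$ of the linearization $Y^{(\beta)}$ and the rescaled test function $\tilde f(w) = M^{-1} f(w^M)$; the bulk support of $f$ forces $\tilde f \in C^2$, supported in an annulus bounded away from the origin. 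Girko's formula together with the identity $|\det W^{(\beta)}(z)| = |\det(Y^{(\beta)}-zI)|^2$ then expresses the centered linear statistic as
\[ N_n^{(\beta)}[f] = \frac{1}{4\pi} \int_{\mathbb{C}} \Delta \tilde f(z)\, \bigl[\log|\det W^{(\beta)}(z)| - \mathbf{E}\log|\det W^{(\beta)}(z)|\bigr] \, d^2 z, \]
and since $G$ has five bounded derivatives it suffices to compare these two integrals in expectation under the two ensembles.

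Next I discretize the $z$-integral using Monte Carlo sampling (Lemma \ref{p1}) with $m = n^{O(1)}$ uniformly random points $z_1, \dots, z_m$ drawn from a fixed compact set containing $\operatorname{supp}(\Delta \tilde f)$; the second-moment bound needed to apply Lemma \ref{p1} follows from Theorem \ref{nemish} together with the eigenvalue-counting bound \eqref{techlem1}. For each fixed sample $z = z_k$, I then represent the log-determinant through its Stieltjes transform,
\[ \log|\det W^{(\beta)}(z)| = \log|\det(W^{(\beta)}(z)-\sqrt{-1}T)| - \int_{\eta_0}^{T} \operatorname{Im} \operatorname{Tr} \bigl(W^{(\beta)}(z)-\sqrt{-1}\eta\bigr)^{-1} d\eta + \mathcal{E}, \]
for a large $T = n^{O(1)}$ and a small cutoff $\eta_0 = n^{-1/2-A}$. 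The boundary contribution at $T$ is smooth and polynomially bounded in the matrix entries, while the tail error $\mathcal{E}$ is handled by Theorem \ref{linsmallsig}, which provides $\sigma_1(Y^{(\beta)}(z)) \geq n^{-1/2-A}$ with probability at least $1 - Cn^{-KA}$ and thereby bounds the integrand on $(0, \eta_0]$ to an acceptable order.

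Finally, I perform the Lindeberg swap $\xi^{(1,\ell)}_{i,j} \leftrightarrow \xi^{(2,\ell)}_{i,j}$ one entry at a time across all $O(Mn^2)$ positions. For each fixed $z_k$ and $\eta \in [\eta_0, T]$, the change from a single swap is exactly the change in $\frac{1}{n}\operatorname{Tr}(W(z_k) - \sqrt{-1}\eta)^{-1}$ from perturbing one matrix entry, which is the setup of Lemma \ref{p2}. Taylor expanding around the matrix with the swapped entry set to zero, the four moment matching hypothesis forces the first four Taylor coefficients to have identical expectations under the two laws, so only the fifth-order remainder survives. The resolvent bounds \eqref{techlem2} and \eqref{eq:uniformetabnd}, combined with the subgaussian tails of the entries (which give $|\xi| = O(\log^{O(1)} n)$ on an overwhelmingly likely event), control this remainder by $n^{-5/2 + O(A)}$ per swap after integrating in $\eta$ and averaging over the $m$ samples; summing over the $O(Mn^2)$ positions and choosing $A$ sufficiently small gives the desired polynomial bound $Cn^{-A'}$, which transfers to $G(N_n^{(\beta)}[f])$ via the smoothness of $G$. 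The main obstacle is precisely the $\eta \to 0$ regime of the resolvent representation: without a polynomial lower bound on $\sigma_1(Y^{(\beta)}(z))$ of the form $n^{-1/2-A}$, neither the tail $\mathcal{E}$ nor the fifth-order Taylor remainder from Lemma \ref{p2} could be made summable against the $O(n^2)$ Lindeberg swaps, and the block structure of $Y^{(\beta)}(z)$ obstructs the classical Tao--Vu least singular value argument, which is why Section \ref{singval_sec} of the paper is devoted to establishing Theorem \ref{linsmallsig}.
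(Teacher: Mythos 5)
Your proposal is correct and follows the paper's proof essentially step for step: Girko Hermitization to pass to the log-determinant of the Hermitian dilation $W^{(\beta)}(z)$, Monte Carlo sampling (Lemma \ref{p1}) plus a local-law variance bound to discretize the $z$-integral, representation of the log-determinant as an integral of the Stieltjes transform, and finally the Tao--Vu resolvent swapping (Lemma \ref{p2}) with fourth-order Taylor expansion and four-moment matching, with Theorem \ref{linsmallsig} supplying the least-singular-value control that makes the per-entry error $O(n^{-5/2+O(c_0)})$ summable over the $O(Mn^2)$ swaps. The paper performs the same steps, with two minor cosmetic differences worth noting: (i) you introduce an explicit small-$\eta$ cutoff $\eta_0 = n^{-1/2-A}$ and isolate a tail term $\mathcal{E}$, whereas the paper keeps the $\eta$-integral running down to $0$ and instead appeals to the uniform resolvent bound \eqref{eq:uniformetabnd} from Lemma \ref{techlem} (which is itself derived using Theorem \ref{linsmallsig}, so the mechanism is the same); (ii) you center $\log|\det W^{(\beta)}(z)|$ by its own expectation, while the paper subtracts the deterministic classical locations $\log|\gamma_j(z)|$ from \cite{Nemish}, which plugs directly into the variance bound (22) of \cite{Nemish} used to justify the Monte Carlo step---your centering works too, but you'd need an extra remark that the $\gamma_j$-centered variance bound carries over (a deterministic shift does not affect variance) and that the difference in the $\beta$-dependent centering constants is itself $O(n^{-A})$, absorbed by translating $G$. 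Neither gap is substantive; your account of why the block structure obstructs the classical least-singular-value argument and why Theorem \ref{linsmallsig} is the crux is accurate.
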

			
		Notice that Theorem \ref{linlinstatthm} follows as a simple corollary by the Fourier inversion formula (as we can take $G$ such that $\mathbf{E}[G(N^{(\beta)}_n[f])]$ is the characteristic function of $N^{(\beta)}_n[f]$ -- see \cite{TV2} or \cite{PK}). The following proof of Theorem \ref{4moments} is a combination of the proof of a similar result in \cite{TV2} with a Monte Carlo sampling argument.
			
		\begin{proof}[Proof of Theorem \ref{4moments}]
		The proof is divided into three steps: the first is a preprocessing step which reformulates the statement we wish to prove into a statement about finite sums. The second step reduces the problem into a statement about Stieltjes transforms, and the third uses resolvent swapping and a Taylor expansion to conclude the argument.
		
		\textit{Step 1: Reformulating the Problem}.
		We will first need a variance bound which follows from the proof of the local $M$-fold circular law \cite{Nemish}. By (2.9) in \cite{Nemish}, we have that, for any $D>0$ and any $\epsilon>0$, with probability at least $1-O_{D,\epsilon}(n^{-D})$, the following estimate holds:
			\begin{align}
				\label{varestmate}
				\int_{\mathbb{C}}|\Delta f(z)|^2 \left|\sum_{j=1}^{2Mn} \log |\lambda_j^{(\beta)}(z)|-\log |\gamma_j(z)| \right|^2d^2z \leq O_{D,\epsilon}\left( n^{\epsilon}\right).
			\end{align}
		Here, $\gamma_j(z)$ represent the classical locations of the eigenvalues of $W^{\beta}(z)$, as defined in \cite{Nemish} (the exact definition of $\gamma_j(z)$ and properties thereof will not be essential to our argument, so we do not provide an overview of this material here; the important feature will simply be that these are deterministic quantities).
			
		By the Girko Hermitization trick, to prove the desired result it suffices to establish the following estimate (see the discussion in Subsection \ref{girkooverview}):
			\begin{align*}
				\Bigg| \mathbf{E}G\left(\int_{\mathbb{C}} \Delta f(z)\sum_{j=1}^{2Mn}[\log|\lambda_j^{(1)}(z)|]d^2z \right)-\mathbf{E}
				G\left(\int_{\mathbb{C}} \Delta f(z)\sum_{j=1}^{2Mn}[\log|\lambda_j^{(2)}(z)|]d^2z \right)\Bigg|\\
				\leq Cn^{-A}.
			\end{align*}
		Fix some choice of positive constant $k_0>0$ to be determined later, and for $K=\left \lceil n^{k_0}\right \rceil$, let $z_1,\ldots,z_K$ denote independent random elements selected uniformly at random from the support of $f$, independent of the product matrices.  Let $L > 0$ be the Lebesgue measure of the support of $f$; since $f$ is supported in the spectral bulk, it follows that $L=O(1)$.  Define the two stochastic Riemann sums $S^{(1)}_K$ and $S^{(2)}_K$:
			\begin{align*}
				S^{(\beta)}_{K}=\frac{L}{K}\sum_{i=1}^K\left(\sum_{j=1}^{2Mn}\Delta f (z_i)\left[
				\log|\lambda_j^{(\beta)}(z_i)|-\log|\gamma_j(z_i)|\right] \right).
			\end{align*}
		By (\ref{varestmate}) (with some choice of $D>0$ to be determined) and the Monte Carlo sampling lemma (Lemma \ref{p1}):
			\begin{align*}
				\mathbf{P}\left\{\left|\left(\int_{\mathbb{C}} \Delta f(z) \sum_{j=1}^{2Mn} [\log|\lambda_j^{(\beta)}(z)|-\log|\gamma_j(z)|]d^2z\right)-S^{(\beta)}_K
				\right|^2
				\leq O\left(\frac{n^{\epsilon}}{{\delta K}}\right)\right\}
				\\ \geq 1-O(n^{-D})-\delta .
			\end{align*}
		Choosing $\epsilon=k_0/4$ and $\delta=n^{-k_0/4}$ and $D$ sufficiently large, this becomes:
			\begin{align*}
				\mathbf{P}\left\{\left|\left(\int_{\mathbb{C}} \Delta f(z) \sum_{j=1}^{2Mn} [\log|\lambda_j^{(\beta)}(z)|-\log|\gamma_j(z)|]d^2z\right)
				-S^{(\beta)}_K\right|^2
				\leq O\left(\frac{1}{n^{k_0/8}}\right)\right\}\\
				\geq 1 - O(n^{-k_0/8}).
			\end{align*}
		Since we are dealing with the expectations of bounded functions, this estimate will suffice for our purposes. Indeed, it follows that we may write:
			\begin{align*}
				&\mathbf{E} G\left(\int_{\mathbb{C}} \Delta f(z)\sum_{j=1}^{Mn}[\log|\lambda_j^{(1)}(z)|
				-\log|\gamma_j(z)|]d^2z \right)\\&-
				\mathbf{E}G\left(\int_{\mathbb{C}} \Delta f(z)\sum_{j=1}^{Mn}[\log|\lambda_j^{(2)}(z)|-\log|\gamma_j(z)|]d^2z \right)\\
				&=\mathbf{E}G(S^{(1)}_K)- \mathbf{E}G (S^{(2)}_K)+O_G\left(\frac{1}{n^{k_0/8}} \right).
			\end{align*}
			
		\textit{Step 2: Additional Reductions.}
		We have now replaced the integral which resulted from the Girko Hermitization trick with a sum of $K$ terms; this more or less reduces the problem to the one solved in \cite{TV2} (as we may deal with each summand separately and just add the resulting errors), and the remainder of the proof just follows the argument made in \cite{TV2}. We may condition on the precise choice of points $z_1,\ldots,z_K$, and do so now.
			
		Expanding the logarithm by way of the fundamental theorem of calculus, one obtains (where we use $s_\beta(z,\sqrt{-1}\eta)$ to denote quantity $\frac{1}{2Mn} \mbox{Tr}[W^{(\beta)}(z)-\sqrt{-1}\eta]^{-1}$) for any $z\in\mathbb{C}$ with $|z| \leq n^2$ (say):
			\begin{align*}
				\log |\det(W^\beta(z)) |&=
				\log\left|\det(W^\beta(z)-in^{100})\right|
				-2Mn\mbox{Im}\int_0^{n^{100}}
				s_\beta(z,\sqrt{-1}\eta)d\eta\\
				&=200Mn\log(n)+O(n^{-10})-2Mn\mbox{Im}\int_0^{n^{100}}
				s_\beta(z,\sqrt{-1}\eta)d\eta.
			\end{align*}
			Since $G$ has a bounded first derivative and may be translated, in order to prove Theorem  \ref{4moments} it is sufficient to show that:
			\begin{align*}
				&\Bigg|\mathbf{E} G\left(\frac{nL}{K}\sum_{i=1}^K \mbox{Im}\int_0^{n^{100}}
				s_1(z_i,\sqrt{-1}\eta)d\eta \right)\\
				&\qquad-\mathbf{E} G\left(\frac{nL}{K}\sum_{i=1}^K \mbox{Im}\int_0^{n^{100}}
				s_2(z_i,\sqrt{-1}\eta)d\eta \right)\Bigg|
				\leq Cn^{-A}.
			\end{align*}
			
			\textit{Step 3: Resolvent Swapping}.
			We would like to simplify this last expression via the resolvent swapping lemma, Lemma \ref{p2}, however we must first make sure that the matrices $W^{(1)}(z_i)$ and $W^{(2)}(z_i)$ satisfy the assumptions of the lemma for each $z_i$, which is the one part of the proof where we will require control over the smallest singular value of linearization matrices. The argument here is the exact clone of the same argument in \cite{TV2}, and we do not reproduce it here. Indeed, by the arguments given in Section 8 of \cite{TV2} (with our Theorem \ref{linsmallsig} in the place of their Proposition 27 and our Lemma \ref{techlem} in place of their Propositions 29 and 31) we can conclude that, with probability  $1-O\left(n^{-\Theta_0 c_0}\right)$  the matrices $W^{(1)}(z_i)$ and $W^{(2)}(z_i)$, for any fixed $i$, are such that resolvent swapping lemma applies. Here the constant $c_0 > 0$ is sufficiently small and $\Theta_0>0$ is absolute.  Since $K=O(n^{k_0})$, by taking $k_0$ small enough and using the union bound we may safely assume that this condition is satisfied  for each $z_i$, and thus every summand in $S_K^{(\beta)}$.
			
			We may now safely swap entries. We demonstrate this process by first swapping the very first entry in the very first pair of factor matrices, $X_{1,1}^{(1,1)}$ and $X_{1,1}^{(2,1)}$. This will require some additional notations, which we develop presently. Let $s_{(1,1)}(z_i,\sqrt{-1}\eta)$ denote the Stieltjes transform of $W^{(1)}(z_i)$, and let $s_{(2,1)}(z_i,\sqrt{-1}\eta)$ denote the Stieltjes transform of the matrix formed by taking $W^{(1)}(z_i)$ and replacing the distribution of the entry $X^{(1,1)}_{1,1}$ (which we will denote $\xi^{(1,1)}_{1,1}$) with the distribution of the entry $X^{(2,1)}_{1,1}$ (which we will denote $\xi^{(2,1)}_{1,1}$). Also let $s^{\prime}_{(1,1)}(z_i,\sqrt{-1}\eta)$ denote the Stieltjes transform of the matrix formed by taking $W^{(1)}(z_i)$ and replacing the distribution of the entry $X^{(1,1)}_{1,1}$ with the distribution of the random variable which is identically equal to zero.
			
			An application of Lemma $\ref{p2}$ (with $W^{(\beta)}(z_i)$ in the role of the Hermitian matrix $H$) produces the expansion (for $\beta=1,2$):
			\begin{align*}
				s_{(\beta,1)}(z_i,\sqrt{-1}\eta)
				&=s^{\prime}_{(1,1)}(z_i,\sqrt{-1}\eta)
				+\sum_{j=1}^4 
				\left(\xi^{(\beta,1)}_{1,1}\right)^j
				n^{-j/2}c_j(\eta)
				\\&+O\left(
				n^{-5/2+O(c_0)}\min\left(1, \frac{1}{n\eta} \right)
				\right).
			\end{align*}
			Define the constants $\tilde{c}_j$ as follows:
			\begin{align*}
				\tilde{c}_j=n\mbox{Im}\int_0^{n^{100}}c_j(\eta)d\eta
			\end{align*}
			By Lemma $\ref{p2}$ and \eqref{eq:uniformetabnd}, the coefficients $\tilde{c}_j$ satisfy:
			\begin{align} \label{eq:cjbound}
				\left|\tilde{c}_j\right|\leq O\left(n^{O(c_0)}\right).
			\end{align}
			By a trivial integration, we have:
			\begin{align}
				n\int_0^{n^{100}}\min\left(1,\frac{1}{n\eta}\right)d\eta
				\leq O\left(\log(n)\right). 
			\end{align}
			And therefore we may write the expansion:
			\begin{align*}
				\frac{n}{K}&\sum_{i=1}^{K}\mbox{Im}\int_{0}^{n^{100}}
				s_{(\beta,1)}(z_i,\sqrt{-1}\eta)d\eta \\
				&=
				\frac{n}{K}\sum_{i=1}^{K}
				\mbox{Im}\int_{0}^{n^{100}}s^{\prime}_{(1,1)}(z_i,\sqrt{-1}\eta)d\eta
				+\sum_{j=1}^4
				\left( \left(\xi^{(\beta,1)}_{1,1}\right)^j n^{-j/2}\tilde{c}_j\right)
				\\ &\qquad\qquad+O\left(
				n^{-5/2+O(c_0)}
				\right).
			\end{align*}	
			Computing the fourth order Taylor expansion of the function $G$, one obtains:
			\begin{align*}
				\mathbf{E} G&\left(\frac{nL}{K}\sum_{i=1}^{K}\mbox{Im}\int_{0}^{n^{100}}
				s_{(\beta,1)}(z_i,\sqrt{-1}\eta)d\eta \right) \\
				&=
				\mathbf{E} G\left(\frac{nL}{K}\sum_{i=1}^{K}
				\mbox{Im}\int_{0}^{n^{100}}s^{\prime}_{(1,1)}(z_i,\sqrt{-1}\eta)d\eta \right) \\
				&\qquad+
				\mathbf{E} \sum_{k=1}^4
				\frac{L^k}{k!}{G^{(k)} \left(\frac{nL}{K}\sum_{i=1}^{K}
				\mbox{Im}\int_{0}^{n^{100}}s^{\prime}_{(1,1)}(z_i,\sqrt{-1}\eta)d\eta \right)}
				\left(\sum_{j=1}^4\left(\xi_{11}^{(\beta,1)}\right)^j n^{-j/2}\tilde{c}_j\right)^k \\
				&\qquad +O_G\left(
				n^{-5/2+O(c_0)}
				\right).
			\end{align*}
			Using the moment matching condition (i.e., the fact that the random variables $\xi_{11}^{(2,1)}$ and $\xi_{11}^{(1,1)}$ match to four moments) and bounding the remaining terms using \eqref{eq:cjbound}:
			\begin{align*}
				\left|\mathbf{E}G\left(\frac{nL}{K}\sum_{i=1}^{K}\mbox{Im}\int_{0}^{n^{100}}
				s_{(1,1)}(z_i,\sqrt{-1}\eta)d\eta \right)
				-\mathbf{E}G\left(\frac{nL}{K}\sum_{i=1}^{K}\mbox{Im}\int_{0}^{n^{100}}
				s_{(2,1)}(z_i,\sqrt{-1}\eta)d\eta \right) \right|\\
				\leq O\left(
				n^{-5/2+O(c_0)}
				\right).
			\end{align*}
			Repeating this process for all $Mn^2$ entries, summing and applying the triangle inequality concludes the proof.
			 \end{proof}
		
		\section{Proof of Theorems \ref{corruni} and \ref{corruni2}}
		\label{corrunisec}
		
		In this section, we will use Theorem \ref{linsmallsig} to prove Theorem \ref{corruni}, which establishes local universality for the correlation functions. The proof of Theorem \ref{corruni2} is virtually identical, except one swaps the call to Theorem 2.1 in \cite{TVPol} (which deals with polynomials with complex coefficients) with a call to Theorem 3.1 in \cite{TVPol} (which deals with polynomials with real coefficients), and is therefore omitted.
		
		As in the statement of Theorem \ref{corruni}, for $\beta\in\{1,2\}$ we let $Z^\beta_n$ denote the product of $M$ independent $n$ by $n$ iid matrices, with $p^{(k)}_\beta$ denoting the associated $k$-point correlation function for the $M$-th root eigenvalue process, and we assume the factor matrices of $Z^1_n$ and $Z^2_n$ match to four moments. Let 
		\[ f^\beta_n(z) = \det(z^M I - Z_n^\beta) \]
		denote the polynomial whose roots form the $M$-th root eigenvalue process associated with $Z^\beta_n$. Additionally, we let $z_1,\ldots,z_k$ be complex numbers (which are allowed to depend on $n$) located in the spectral bulk $\tau_0\leq n^{-1/2}|z_i|\leq 1-\tau_0$, and let $G:\mathbb{C}^k\to\mathbb{C}$ denote a smooth function supported on the polydisc $B(0,r_0)^k$, where $r_0$ is a small constant which is allowed to depend on $\tau_0$.
		
		The method of proof consists mainly of an application of Theorem 2.1 in \cite{TVPol}, the statement of which we reproduce as Theorem \ref{randpollem} below, a wide ranging result concerning the universality of zeros of random polynomials. We will use $N_{B(z, \rho)}(f)$ to denote the number of zeros of $f$ in $B(z,\rho)$, the disk centered at $z$ with radius $\rho$.
		
		\begin{theorem} [Tao--Vu]
			\label{randpollem}
			Let $C_1, r_0\geq 1, 1\geq c_0 \geq 0$ be real constants and let $a_0, k\geq 1$ be integer constants.  Set $A=100ka_0/c_0$. Let $f^1_n$ and $f^2_n$ be random polynomials of degree at most $n$, and let $z_1,\ldots,z_k$ be $k$ points in the complex plane (which are allowed to depend on $n$). 
			Assume that three conditions holds: 
			\begin{enumerate}
				\item (Non-degeneracy) The probability that either polynomial is identically zero is at most $C_1 n^{-A}$.  
				\item (Non-clustering) For $r \geq 1$, one has $N_{B(z_i,r)}(f^1_n)\leq C_1n^{1/A}r^2$ with probability at least $1-C_1 n^{-A}$, and similarly for $f^2_n$.
				\item (Comparability of log-magnitudes) Given any $1\leq k^\prime \leq n^{c_0}$ and complex number $z^\prime_1,\ldots,z^\prime_{k'} \in \cup_{i=1}^k B(z_i,20r_0)$, and any smooth function $F: \mathbb{C}^{k'} \to \mathbb{C}$ with the derivative bound $|\nabla^a F(w)|\leq n^{c_0}$ for all $0\leq a\leq a_0$ and $w\in \mathbb{C}^{k^\prime}$, one has:
				\begin{align*}
					\left|\mathbf{E}F\left(\log(|f^1_n(z_1^\prime)|), \dotsm, 
					\log(|f^1_n(z_{k^\prime}^\prime)|)\right)-
					\mathbf{E}F\left(\log(|f^2_n(z_1^\prime)|),\dotsm, 
					\log(|f^2_n(z_{k^\prime}^\prime)|)\right)\right|\\
					\leq C_1n^{-c_0},
				\end{align*}
				with the convention that $F$ vanishes when one or more of its arguments is undefined.
			\end{enumerate}
			Let $G: \mathbb{C}^k \to \mathbb{C}$ be a smooth function supported on the polydisc $B(0,r_0)^k$ such that, for some $C_2 > 0$, $G$ obeys the derivative bound $|\nabla^a G(w)|\leq C_2$  for all $0\leq a\leq a_0 + 2k + 1$ and all $w\in \mathbb{C}^k$. Then (letting $p_{\beta,n}^{(k)}$ be the $k$-point correlation function of the zeros of the random polynomial $f_{n}^\beta$):
			\begin{align*}
				\Bigg|\int_{\mathbb{C}^k} G(w_1 ,\ldots, w_k)p_{1,n}^{(k)}(z_1+w_1,\ldots, 
				z_k+w_k)d^2w_1\ldots d^2w_k\\
				-\int_{\mathbb{C}^k} G(w_1 ,\ldots, w_k)p_{2,n}^{(k)}(z_1+w_1,\ldots, 
				z_k+w_k)d^2w_1\ldots d^2w_k
				\Bigg|\\
				\leq O\left(n^{-c_0/4}\right),
			\end{align*}
			where the implied constant depends only on $C_1, r_0, c_0, k, a_0$, and linearly on $C_2$.  
		\end{theorem}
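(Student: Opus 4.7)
The plan is to prove Theorem~\ref{randpollem} by combining Girko's logarithmic Hermitization for zeros of a polynomial with a Monte Carlo discretization, so as to reduce the comparison of correlation functions on the two ensembles to precisely the hypothesis (iii) of comparability of log-magnitudes at a controlled number of points.

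First, I would rewrite the integrals of interest using the distributional identity $\frac{1}{2\pi}\Delta_w \log|f_n^\beta(w)| = \sum_{\zeta:f_n^\beta(\zeta)=0} \delta_{\zeta}$ (zeros counted with multiplicity). Integrating by parts $k$ times against the compactly supported shift $w\mapsto G(w_1-z_1,\ldots,w_k-z_k)$ yields
\[
\sum_{j_1,\ldots,j_k} G(\zeta_{j_1}-z_1,\ldots,\zeta_{j_k}-z_k) = \frac{1}{(2\pi)^k}\int_{\mathbb{C}^k}\widetilde{G}(w)\prod_{i=1}^k \log|f_n^\beta(w_i)|\,d^2w_1\cdots d^2w_k,
\]
where $\widetilde{G}=\Delta_{w_1}\cdots\Delta_{w_k}\bigl[G(w_1-z_1,\ldots,w_k-z_k)\bigr]$ is smooth and supported in $\prod_i B(z_i,r_0)$, with derivative bounds inherited from $G$. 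The correlation-function integral $\int G\cdot p^{(k)}_{\beta,n}$ equals the expectation of the same sum restricted to \emph{distinct} tuples. The difference between the unrestricted and restricted sums is a finite sum of lower-order terms of the form $\sum_j \widetilde{G}_{\text{diag}}(\zeta_j)$, which by the non-clustering hypothesis (ii) contribute at most $O(n^{1/A})$ each, with failure probability $O(n^{-A})$; choosing $A$ as specified makes these negligible compared to the final error $O(n^{-c_0/4})$.

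Second, I would discretize the $\mathbb{C}^k$-integral by a Monte Carlo sum. Sample $N\approx n^{c_0/k}$ i.i.d.\ points $w^{(s)}$ uniformly on $\prod_i B(z_i,r_0)$, independent of the polynomials; applying Lemma~\ref{p1} (Chebyshev-based Monte Carlo) to the integrand gives
\[
\frac{1}{(2\pi)^k}\int \widetilde{G}\prod_i\log|f_n^\beta(w_i)|
= \frac{|K|}{N}\sum_{s=1}^N \widetilde{G}(w^{(s)})\prod_{i=1}^k \log|f_n^\beta(w_i^{(s)})| + O(n^{-c_0/(2k)}),
\]
with probability $1-o(1)$, provided we have an $L^2$ control on $\log|f_n^\beta|$ over the sampling region. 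Such control follows from hypothesis (ii): the measure of points within distance $r$ of some zero is $O(n^{1/A}r^2)$, so the logarithmic singularity is integrable to any fixed power with bound $O(\log n)^{O(1)}$.

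Third, I would apply hypothesis (iii) to the now-finite sum. Let $\chi_{M_0}:\mathbb{R}\to\mathbb{R}$ be a smooth truncation of the identity at level $M_0=n^{c_0/(100k)}$, and define
\[
F(x_{s,i}) := \frac{|K|}{N}\sum_{s=1}^N \widetilde{G}(w^{(s)})\prod_{i=1}^k \chi_{M_0}(x_{s,i}),
\]
a function of at most $kN\leq n^{c_0}$ variables whose derivatives up to order $a_0$ are bounded by $O(n^{c_0})$ by design (the constant $100$ in $M_0$ gives room after multiplying $k$ copies). Hypothesis~(iii) then gives
\[
\bigl|\mathbf{E}\,F(\log|f_n^1(w^{(s)}_i)|) - \mathbf{E}\,F(\log|f_n^2(w^{(s)}_i)|)\bigr| \leq C_1 n^{-c_0},
\]
and the truncation is harmless because $|\log|f_n^\beta(w)||>M_0$ occurs only when $w$ lies within $e^{-M_0}$ of a zero (handled by (ii) after shrinking balls) or when $|f_n^\beta(w)|>e^{M_0}$ (excluded by a trivial degree estimate combined with the non-degeneracy hypothesis (i)). Averaging over the sample points and absorbing all the error sources yields the claimed bound $O(n^{-c_0/4})$ after collecting exponents.

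The main obstacle will be step three: reconciling the requirement that $F$ have derivatives bounded by $n^{c_0}$ with the fact that $\prod_{i}\log|f_n^\beta(w_i^{(s)})|$ has unbounded tails when some sample point $w_i^{(s)}$ lands near a zero of $f_n^\beta$. The resolution is a careful truncation at level $M_0=n^{c_0/(100k)}$ whose error is controlled by the non-clustering bound (ii) on the local density of zeros; the bookkeeping of exponents --- balancing the Monte Carlo error $n^{-c_0/(2k)}$, the truncation error, the discarded diagonal sums of size $n^{1/A}$, and the $n^{-c_0}$ coming out of (iii) --- is what forces the specific choice $A=100ka_0/c_0$ and ultimately produces the final $O(n^{-c_0/4})$.
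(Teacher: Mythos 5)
This theorem is a verbatim quotation of Theorem 2.1 in \cite{TVPol}; the present paper offers no proof of it (it is invoked as a black box in the proofs of Theorems \ref{corruni} and \ref{corruni2}), so there is nothing in the paper's LaTeX source to compare your attempt against. What you have done instead is sketch, from scratch, the argument Tao and Vu actually give in \cite{TVPol}, and your high-level outline is on the right track: it really is a Green's-function/Girko expansion of $\sum G(\zeta-z)$ in terms of $\log|f_n^\beta|$, followed by a Monte Carlo discretization of the $\mathbb{C}^k$-integral designed precisely so that hypothesis (iii) on log-magnitude comparability can be applied to the resulting finite sum, with hypotheses (i)--(ii) policing the tails, the diagonal terms, and the sampling variance.

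There is, however, a concrete error in the exponent bookkeeping that would break the argument as written. You propose sampling $N \approx n^{c_0/k}$ points, which gives a Monte Carlo error of order $N^{-1/2} = n^{-c_0/(2k)}$. For $k \geq 3$ this exceeds the target $n^{-c_0/4}$, so the bound you claim does not follow. The constraint imposed by (iii) is only that the total number of evaluation points $k'=kN$ satisfy $kN \leq n^{c_0}$, so you should be taking $N$ on the order of $n^{c_0}/k$ (polynomially larger than your choice), giving Monte Carlo error $\approx n^{-c_0/2}$, which leaves comfortable room against $n^{-c_0/4}$. A second, softer gap: the $L^2$ bound on $\prod_i \log|f_n^\beta(w_i)|$ that you invoke before applying Lemma \ref{p1} is itself a probabilistic statement over the randomness of $f_n^\beta$, not just of the sample points, and the way hypotheses (i) and (ii) are deployed to control the upper tail $|f_n^\beta(w)| > e^{M_0}$ (which involves controlling the size of the leading coefficient, not just the proximity of $w$ to a zero) is left unaddressed in your sketch; the Tao--Vu proof spends real effort here, using the non-clustering bound together with a Jensen-type estimate on $\log|f_n|$ to control both the lower and upper tails of the log-magnitude uniformly in the sampling region, and your outline should be explicit that these tail estimates are the main technical load-bearing step rather than ``bookkeeping.''
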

		\begin{remark} \label{rem:scaling}
		Theorem \ref{randpollem} is designed to handle cases where the mean spacing between zeros is on the order of a constant. This is consistent with the $M$-th root eigenvalue process, where the mean spacing between points in the bulk is comparable to $1$.  To handle cases where the mean spacing is not on the order of a constant, one would need to generalize Theorem \ref{randpollem}; see Remark 2.5 in \cite{TVPol} for further details.  If one wishes to prove a version of Theorem \ref{corruni} using a different scaling convention (rather than the $M$-th root eigenvalue process), one would need to utilize this more general version of Theorem \ref{randpollem}.  
		\end{remark}
		
		The proof of Theorem \ref{corruni} will boil down to making sure that the three conditions in this preceding theorem hold.  Let us show instead that the analogous result holds for the characteristic polynomials of the linearization matrices, $Y^1(0)$ and $Y^2(0)$ defined by
		\[ Y^\beta(z) = \left( \begin{array}{ccccc}
			-zI & X^{(\beta,1)} &  0 & \cdots & 0 \\
			0 & -zI & X^{(\beta,2)} & \cdots & 0 \\
			\vdots & \vdots & \vdots & \ddots & \vdots \\
			0 & \cdots & 0 & -zI & X^{(\beta,M-1)} \\
			X^{(\beta,M)} & 0 & \cdots & 0 & -zI 
			\end{array} \right).\] 
		Indeed, since we have the identity $|f_n^\beta(z)|=|\det(Y_n^\beta(z))|$ (which follows from induction and the Schur determinant identity), we may assume that $f^1_n$ and $f^2_n$ are the characteristic polynomials of $Y^1(0)$ and $Y^2(0)$, respectively.  Choose $r_0 > 0$ so small that $B(z_i,20r_0)$ is still in the spectral bulk $\tau^\prime_0\leq n^{-1/2}|z|\leq 1-\tau^\prime_0$ for some positive $\tau_0^\prime$ smaller than $\tau_0$, and let $c_0$ denote an arbitrarily small positive constant.  We now verify the three conditions of Theorem \ref{randpollem}.  
		
		The first condition, the non-degeneracy condition, is immediate since these are both characteristic polynomials of matrices. Second, we need the non-clustering property, ensuring that $N_{B(z_i,\rho)}(f^\beta_n)\leq C_1n^{1/A}\rho^2$ for $\rho\geq 1$ with probability at least $1-O(n^{-A})$, but this is just a consequence of Nemish's local circular law for the linearization matrices $Y^\beta(0)$, which follows from the results in \cite{Nemish} if we choose the constant $C_1$ sufficiently large (for instance, by approximating the indicator function of the relevant disc by smooth functions). We therefore focus on establishing the third condition: comparability of log-magnitudes. We will demonstrate this using some of the same arguments we used in establishing Theorem \ref{ginibre4mom}, which were originally introduced in \cite{TV2}.  Because the arguments here are similar to those in \cite{TV2}, we mostly provide a sketch of the details and explain those portions which differ from \cite{TV2}.  
		
		We need to show that given any $k^\prime\leq n^{c_0}$ and any collection of complex numbers $z^\prime_1,\ldots,z_{k'}^\prime$ in the spectral bulk, and for any function $F$ satisfying the estimate $\left|\Delta^a F\right|\leq n^{c_0}$ for $0 \leq a\leq 5$, we have:
		\begin{align*}
			\left|\mathbf{E} F
			\left(\log|f_n^1(z^\prime_1)|,
			,\ldots,\log|f_n^1(z^\prime_{k^\prime})|
			\right)
			- \mathbf{E}F\left(
			\log|f_n^2(z^\prime_1)|,\ldots,\log|f_n^2(z^\prime_{k^\prime})|
			\right) \right|\\
			\leq C_1n^{-c_0}.
		\end{align*}
		
		We will also first assume that $k^\prime=1$ to keep the presentation and the notations simple, but the argument generalizes easily.
		We will deal with the linearization matrices using the same strategy employed during the proof of Theorem \ref{4moments}, albeit without the Monte Carlo sampling step. Define the matrices $W^{(\beta)}(z_1)$, for $\beta\in\{1,2\}$, as in \eqref{wbz}. Let $s_\beta(z^\prime_1,\sqrt{-1}\eta)$ denote the Stieltjes transform of $W^{(\beta)}(z^\prime_1)$, which is defined as $\frac{1}{2Mn}\mbox{tr}(W^{(\beta)}(z^\prime_1)-\sqrt{-1}\eta)^{-1}$. We have (by the argument presented in Step 2 of the proof of Theorem \ref{4moments}):
		\begin{align*}
			&\left|\mathbf{E}
			{F}\left(\log \left|\det Y^1(z^\prime_1)\right|\right) 
			- \mathbf{E} {F}\left(\log \left|\det Y^2_n(z^\prime_1)\right|\right)
			\right|\\&=
			\left|\mathbf{E}
			{F}\left(\frac{1}{2}\log \left|\det W^1(z^\prime_1)\right|\right) 
			- \mathbf{E} {F}\left(\frac{1}{2}\log \left|\det W^2_n(z^\prime_1)\right|\right)
			\right|
			\\
			&=\Bigg|
			\mathbf{E}{\tilde F}\left({Mn}
			\mbox{Im}\int_0^{n^{100}}s_1(z^\prime_1,\sqrt{-1}\eta)d\eta
			\right)
			\\
			&\qquad-\mathbf{E}{\tilde F}\left({Mn}
			\mbox{Im}\int_0^{n^{100}}s_2(z^\prime_1,\sqrt{-1}\eta)d\eta
			\right)
			\Bigg|+O\left(n^{-10}\right)
		\end{align*}
		by taking $c_0$ sufficiently small.  Here, $\tilde{F}$ is a translation of $F$ (exactly as was done in Step 2 of the proof of Theorem \ref{4moments}).  
		Applying the same argument as in Step 3 of the proof of Theorem \ref{4moments} (and also as in \cite{TV2}), and in particular using Theorem \ref{linsmallsig}, we see that Lemma \ref{p2}, the resolvent swapping lemma, applies, and that $Mn^2$ separate applications of the the resolvent swapping lemma (combined with a Taylor expansion of $\tilde{F}$, again exactly as in Step 3 of the proof of Theorem \ref{4moments}), provides:
		\begin{align*}
			\left|\mathbf{E}
			{F}\left(\log \left|\det Y^1(z^\prime_1)\right|\right) 
			-\mathbf{E} {F}\left(\log \left|\det Y^2_n(z^\prime_1)\right|\right)
			\right|&\leq C_{{F}}Mn^2n^{-5/2+O(c_0)}\\
			&\leq O\left(n^{-c_0}\right)
		\end{align*}
		by taking $c_0$ sufficiently small, 
		which establishes all three conditions from Theorem \ref{randpollem}. If $k^\prime>1$, we simply apply this argument $k^\prime$ times (one for each argument of ${F}$), and use the upper bound $k^\prime\leq O(n^{c_0})$ (by taking $c_0$ smaller if necessary) to show that the error is still sufficiently small. By Theorem \ref{randpollem}, the result is then established.
				
\bibliography{products}
\bibliographystyle{abbrv}
\end{document}